\newcommand{\cmark}{\ding{51}}%
\newcommand{\xmark}{\ding{55}}%
\newcommand{\inner}[1]{\left\langle #1 \right\rangle}
\newcommand{\norm}[1]{\left\Vert #1\right\Vert}
\newcommand{\bb}[1]{\mathbb{#1}}
\newcommand{\M}{\mathcal{M}}
\newcommand{\ca}[1]{\mathcal{#1}}
\newcommand{\tr}[0]{\mathrm{tr}}
\newcommand{\tp}{^\top}
\newcommand{\A}{\ca{A}}
\newcommand{\Tx}{{\ca{T}_x}}
\newcommand{\Nx}{\ca{N}_x}
\newcommand{\xk}{{x_{k} }}
\newcommand{\Jc}{{{J}_c}}
\newcommand{\Ja}{{{J}_{\A}}}
\newcommand{\DJa}{ {\ca{D}_{{J}_\A}} }
\newcommand{\DJc}{{\ca{D}_{{J}_c}}}
\newcommand{\grad}{{\mathit{grad}\,}}
\newcommand{\hess}{{\mathit{hess}\,}}
\newcommand{\Pkg}[1]{\textsf{#1}\xspace}
\newcommand{\codeobj}[1]{\texttt{#1}\xspace}
\newcommand{\CDOpt}{\Pkg{CDOpt}}
\newcommand{\betaesti}{\tilde{\beta}}
\newcommand{\Lf}{ {M_{x,f}} }
\newcommand{\Lg}{ {L_{x,g}} }
\newcommand{\Lsc}{ \sigma_{x,c} }
\newcommand{\Mc}{ {M_{x,c}} }
\newcommand{\Ma}{ M_{x,A} }
\newcommand{\Lc}{ {L_{x,c}} }
\newcommand{\La}{ {L_{x,A}} }
\newcommand{\Lac}{ {L_{x, b}} }
\newcommand{\Omegax}[1]{ {\Omega_{#1}} }
\newcommand{\BOmegax}[1]{ {\bar{\Omega}_{#1}} }
\newcommand{\y}{{y}}
\newcommand{\X}{{\mathcal{X}}}
\newtheorem{theo}{Theorem}
\newtheorem{lem}{Lemma}
\newtheorem{prop}{Proposition}
\newtheorem{defin}{Definition}
\newtheorem{rmk}{Remark}
\newtheorem{assumpt}{Assumption}
\DeclareMathOperator*{\argmin}{arg\,min}
\definecolor{deepblue}{rgb}{0,0,0.5}
\definecolor{deepred}{rgb}{0.6,0,0}
\definecolor{deepgreen}{rgb}{0,0.5,0}
\DeclareFixedFont{\ttm}{T1}{txtt}{m}{n}{7}
\newcommand\pythonstyle{\lstset{
		language=Python,
		basicstyle=\linespread{0.5}\selectfont\ttm,
		otherkeywords={self},
		keywordstyle=\ttm\color{deepblue},
		emph={MyClass,__init__},
		emphstyle=\ttm\color{deepblue},
		stringstyle=\color{deepred},
		commentstyle=\ttm\color{deepgreen},
		frame=tb,
		showstringspaces=false
}}
\lstdefinestyle{mystyle}{
	language=Python,
	basicstyle=\linespread{0.5}\selectfont\ttm,
	otherkeywords={self},
	keywordstyle=\ttm\color{deepblue},
	emph={__init__},
	emphstyle=\ttm\color{deepblue},
	stringstyle=\color{deepred},
	commentstyle=\ttm\color{deepgreen},
	frame=tb,
	showstringspaces=false
}
\newcommand\pythonexternal[2][]{{
		\pythonstyle
		\lstinputlisting[#1]{#2}}}
\begin{document}
	
	\title{CDOpt: A Python Package for a Class of Riemannian Optimization}
	
	
	
	\author{Nachuan Xiao    \and
		Xiaoyin Hu
		\and Xin Liu
		\and Kim-Chuan Toh 
	}
	
	
	\institute{Nachuan Xiao \at
		The Institute of Operations Research and Analytics, National University of Singapore, Singapore.\\
		\email{xnc@lsec.cc.ac.cn}           
		\and
		Xiaoyin Hu \at
		School of Computer and Computing Science, Hangzhou City University, Hangzhou, China.\\
		Academy of Edge Intelligence, Hangzhou City University, Hangzhou(Gongshu), China.\\
		\email{hxy@amss.ac.cn} 
		\and
		Xin Liu \at
		State Key Laboratory of Scientific and Engineering Computing, Academy of Mathematics and Systems Science, Chinese Academy of Sciences, and University of Chinese Academy of Sciences, China.\\
		\email{liuxin@lsec.cc.ac.cn}
		\and
		Kim-Chuan Toh \at
		Department of Mathematics, and Institute of Operations Research and Analytics, National University of Singapore, Singapore.\\
		\email{mattohkc@nus.edu.sg}.
	}
	
	\date{Received: date / Accepted: date}

	\maketitle
	
	\begin{abstract}
		Optimization over an embedded submanifold defined by equality constraints $c(x) = 0$ has attracted much interest over the past few decades due to its wide applications in various areas, including computer vision, signal processing, numerical linear algebra, and deep learning. For solving such problems, many related optimization packages have been developed based on Riemannian optimization approaches, which rely on some basic geometrical materials of Riemannian manifolds, including Riemannian gradients, retractions, vector transports, etc. These geometrical materials can be challenging to determine in general. In fact, existing packages only accommodate a few well-known manifolds whose geometrical materials are more easily accessible. For other manifolds that are not contained in these packages, the users have to develop the geometric materials by themselves. In addition, it is not always tractable to adapt the advanced features from various state-of-the-art unconstrained optimization solvers to Riemannian optimization approaches.

		Here we introduce a user-friendly Python package, \CDOpt (available at \url{https://cdopt.github.io/} under BSD 3-clause license),  for solving a class of Riemannian optimization problems. \CDOpt is designed to complement existing Riemannian optimization packages by transforming Riemannian optimization problems into their unconstrained counterparts through
		the constraint dissolving approach. We prove that the when the penalty parameter in the constraint dissolving approach is 
		sufficiently large, Riemannian optimization problems and their unconstrained counterparts are equivalent. Therefore, solving Riemannian optimization problems through \CDOpt can directly benefit from various existing solvers and the rich expertise gained over the past few decades for unconstrained optimization. 
		Moreover, all the computations in \CDOpt related to any manifold in question are conducted through its constraints expression, hence users can easily define new manifolds in \CDOpt without any background on differential geometry. Furthermore, \CDOpt extends the neural layers from \Pkg{PyTorch} and \Pkg{Flax}, thus allowing users to train manifold constrained neural networks directly by the solvers for unconstrained optimization. Extensive numerical experiments demonstrate that \CDOpt is highly efficient and robust in solving various classes of Riemannian optimization problems.
		\keywords{Riemannian optimization \and Penalty function \and Unconstrained optimization \and Constraint dissolving}
		\subclass{90C30 \and 65K05}
	\end{abstract}

	\section{Introduction}
	\subsection{Problem formulation}
	In this paper, we consider the following constrained optimization problem 
	\begin{equation}
		\label{Prob_Ori}
		\tag{OCP}
		\begin{aligned}
			\min_{x \in \bb{R}^n} \quad & f(x)\\
			\text{s.t.} \quad & \M := \{x \in \bb{R}^n: c(x) = 0 \},
		\end{aligned}
	\end{equation}
	where the objective function  $f: \bb{R}^n \to \bb{R}$ and constraint mapping $c: \bb{R}^n \to \bb{R}^p$ of \ref{Prob_Ori} satisfy the following assumptions:
	\begin{assumpt}{\bf Blanket assumptions}
		\label{Assumption_1}
		
		\begin{enumerate}
			\item $f$ is locally Lipschitz continuous in $\bb{R}^n$;
			\item The transposed Jacobian of $c$, denoted as $\Jc(x) \in \bb{R}^{n \times p}$, 
			is locally Lipschitz continuous in $\bb{R}^{n}$;
			\item The relaxed constant rank constraint qualification (RCRCQ) \cite{minchenko2011relaxed} holds for any $x \in \M$, i.e. there exists a constant $p'\leq p$ and a neighborhood $\X$ of $\M$ such that  the rank of $\Jc(x)$ equals to $p'$ for any $x \in \X$. 
		\end{enumerate}
	\end{assumpt}
	Since $c$ is smooth in $\bb{R}^n$, the feasible region $\M$ is a closed submanifold embedded in $\bb{R}^n$ \cite{Absil2009optimization,boumal2020introduction}.
	In fact, \ref{Prob_Ori} satisfying Assumption \ref{Assumption_1} covers a great number of practically interesting Riemannian optimization problems  \cite{hu2020brief,xiao2022constraint} arising over the past few decades. These Riemannian optimization problems can be categorized into two different categories. One is the so-called {\it standard Riemannian optimization problem}, where the function value and derivatives of the objective function are explicitly formulated and affordable to evaluate, hence can be directly provided to optimization packages. These problems include the Kohn-Sham total energy minimization \cite{bai2012minimization}, dictionary learning \cite{zhai2020complete}, dual principal component pursuit \cite{tsakiris2018dual,hu2020anefficiency}, symplectic eigenvalue problems \cite{son2021symplectic}, etc. Interested readers may refer to the books \cite{Absil2009optimization, boumal2020introduction} and a recent survey paper \cite{hu2020brief} for more details of these problems. 
	
	Another category is referred as {\it training neural networks with manifold constraints} throughout this paper. Training deep neural networks (DNNs) is usually thought to be challenging both theoretically and practically, where the gradient vanishing and exploding problem is one of the most important reasons \cite{glorot2010understanding}.  To address this issue, several recent works focus on training DNNs while imposing manifold constraints to the weights of their neural layers, especially on restricting the weights over the Stiefel manifold \cite{arjovsky2016unitary,bansal2018can,huang2018orthogonal,lezcano2019trivializations}. As orthogonality can be used to impose energy preservation properties on these DNNs  \cite{zhou2006special}, some existing works demonstrate that the orthogonal constraints can stabilize the distribution of activations among neural layers within DNNs and make their optimization more efficient. Moreover, many existing works \cite{bansal2018can,huang2018orthogonal,wang2020orthogonal} observe encouraging improvements in the accuracy and robustness of the DNNs with orthogonally constrained weights.  
	
	In training neural networks with manifold constraints, the neural network is built by the composition of neural layers, while the objective function is formulated as the expectation of the losses over samples. Therefore, the function value and differentials of the objective function are usually unaffordable to be evaluated exactly, and most of the {\it optimizers} (i.e., solvers for training neural networks) are developed based on stochastic optimization algorithms to reduce the computational costs. Moreover,  existing deep-learning frameworks (e.g. \Pkg{PyTorch} \cite{pytorch2019pytorch}, \Pkg{JAX} \cite{jax2018github}, \Pkg{TensorFlow} \cite{tensorflow2015-whitepaper}) provide various advanced features, including GPU/TPU acceleration, automatic differentiation (AD), just-in-time compilation, to further accelerate the training. As a result, training neural networks with manifold constraints requires specialized optimization solvers (i.e., optimizers) to utilize the advanced features of these frameworks, which leads to great differences between their optimizers and the solvers for standard Riemannian optimization problems.

	\subsection{Existing approaches and optimization packages}
	Due to the diffeomorphisms between an Euclidean space and a Riemannian manifold, various {\it unconstrained optimization approaches} (i.e., approaches for solving unconstrained nonconvex optimization) can be transferred to their corresponding {\it Riemannian optimization approaches} (i.e., the approaches for Riemannian optimization). In particular, \cite{Absil2009optimization} provides several well-recognized frameworks based on geometrical materials from differential geometry, including geodesics,  parallel transports, and Riemannian differentials. 
	
	The geodesics generalize the concept of straight lines to Riemannian manifolds, but are expensive to compute in most cases. To this end, \cite{Absil2009optimization} provides the concept of retractions as relaxations to geodesics, which makes them more affordable to compute. Besides,  parallel transports are mappings that move tangent vectors along given curves over a Riemannian manifold ``parallelly''. Computing parallel transports is essential in computing the linear combinations of two vectors from different tangent spaces. In particular, it is necessary for those approaches that utilize information in past iterations (e.g., quasi-Newton methods, nonlinear conjugate gradient methods, momentum SGD, ADAM).  For various Riemannian manifolds, computing the parallel transports amounts to solving differential equations,  which is generally unaffordable in practice \cite{Absil2009optimization}. Therefore, the concept of vector transports is proposed as approximations to parallel transports to alleviate the computational cost in Riemannian optimization approaches. Although retractions and vector transports can make the computation more affordable by approximation, how to develop efficient formulations of retractions and vector transports for new manifold constraints remain challenging.

	Almost all the existing Riemannian optimization packages are developed based on the framework proposed by \cite{Absil2009optimization}. These packages are either developed for standard Riemannian optimization problems (e.g., \Pkg{Manopt} \cite{boumal2014manopt}, \Pkg{PyManopt} \cite{townsend2016pymanopt}, \Pkg{ROPTLIB} \cite{huang2018roptlib}), or specialized for training neural networks with manifold constraints (e.g., \Pkg{Geoopt} \cite{geoopt2020kochurov}, \Pkg{McTorch} \cite{meghwanshi2018mctorch}, \Pkg{Rieoptax} \cite{rieoptax2022rieoptax}).     
	In these packages, the manifold is described by the constraints, together with the geometrical materials from differential geometry, including retractions and their inverses, vector transports, etc. Some packages, including \Pkg{Geomstats} \cite{geomstats2021}, even require the exact formulation of exponential mappings and logarithm mappings. Unfortunately, the task of computing these geometrical materials is usually non-trivial. Although these existing Riemannian optimization packages provide several predefined manifolds based on various excellent works on determining the geometrical materials for some important manifolds  \cite{edelman1998geometry,nickel2018learning,bendokat2020grassmann,gao2021riemannian}, it is still non-trivial to add new manifold constraints for users and developers, especially when they are nonexperts in differential geometry.

	As manifold constraints are characterized by geometrical materials, Riemannian optimization solvers are developed from unconstrained optimization ones by incorporating these geometrical materials. To extend a large collection of unconstrained optimization solvers to its Riemannian versions, the following three steps are indispensable: (i) replace Euclidean differentials by Riemannian differentials; (ii)  invoke retractions to keep the feasibility of the iterates;  (iii)  employ vector transports to move vectors among the tangent spaces of the manifolds. Therefore, building up new Riemannian optimization algorithms by borrowing the advanced features of state-of-the-art unconstrained optimization algorithms is challenging in general. Existing Riemannian optimization packages only provide limited Riemannian optimization algorithms. For example, compared with over $30$ efficient optimizers from \Pkg{PyTorch} and \Pkg{PyTorch-optimizer} packages for unconstrained optimization,  \Pkg{Geoopt} only provides Riemannian Adam and Riemannian SGD, and \Pkg{McTorch} is only integrated with Riemannian AdaDelta and Riemannian SGD.

	Additionally, the package \Pkg{GeoTorch} \cite{lezcano2019trivializations} is developed based on the trivialization approach for training neural networks with manifold constraints by \Pkg{PyTorch}. In \Pkg{GeoTorch}, the manifold is characterized by a smooth surjective mapping (trivialization mapping) $\psi: \bb{R}^N \to \M$ for some constant $N$. Then \Pkg{GeoTorch} transforms \ref{Prob_Ori} into the following unconstrained optimization problem,
	\begin{equation*}
		\min_{u \in \bb{R}^N} \quad f(\psi(u)).
	\end{equation*}
	However, determining the trivialization mapping $\psi$ is usually as challenging as determining the geometrical materials of $\M$. Until now, the formulations of $\psi$ are only determined for spheres and Stiefel manifolds. 
	
	On the other hand, although \Pkg{GeoTorch} enables the direct implementation of unconstrained optimization solvers to \ref{Prob_Ori} for spheres and Stiefel manifolds,  the expression of trivialization mappings in \Pkg{GeoTorch} are usually too complicated to be computed efficiently. As mentioned in \cite{wang2020orthogonal,xiao2021solving,hu2022constraint,ablin2022fast}, computing the mapping $\psi$ provided in \Pkg{GeoTorch} is usually costly, and could become the major computational bottleneck for the optimizers. 
	For example, when $\M$ is chosen as the Stiefel manifold $\ca{S}_{m,s}$ embedded in $\bb{R}^{m\times s}$, $\psi$ is chosen as the Cayley transform or matrix exponential of an $n\times n$ skew-symmetric matrix in \Pkg{GeoTorch}. Then computing $\psi$ and its differentials are extremely expensive, leading to its inferior performance when compared with other optimization approaches \cite{wang2020orthogonal,ablin2022fast,hu2022constraint}.

	Apart from the trivialization approach, \cite{ablin2022fast} introduces the landing method for minimizing continuously differentiable functions over the Stiefel manifold. Then following the framework of the landing method, \cite{gao2022optimization}  incorporates the canonical metric to the landing method \cite{gao2022optimization}, and \cite{ablin2023infeasible} extends the landing method to solve stochastic optimization over the Stiefel manifold. Despite these advancements, these variants of the landing method are primarily developed based on the vanilla (stochastic) gradient descent method and do not integrate widely-used acceleration techniques (e.g., heavy-ball momentum, Nesterov momentum, adaptive stepsizes). Furthermore, the existing approaches derived from the landing method are mainly restricted to problems on the Stiefel manifold. The challenge of extending the landing method to address the broader class of problems represented by \eqref{Prob_Ori} remains an open question.

	Furthermore, by regarding \eqref{Prob_Ori} as a constrained optimization problem, some existing penalty function approaches can be applied to solve \eqref{Prob_Ori}. The nonsmooth penalty function approach \cite{nocedal2006numerical} is a simple and fundamental approach for constrained optimization, where the penalty function can be expressed as,
	\begin{equation*}
		\psi_{NS}(x) := f(x) + \beta \norm{c(x)}.
	\end{equation*}
	However, the penalty function $\psi_{NS}$ is nonsmooth even if $f$ and $c$ are continuously differentiable. Therefore, the minimization of $\psi_{NS}$ is challenging in practice, as a wide range of efficient methods for smooth unconstrained optimization (e.g., quasi-Newton method, conjugate gradient method \cite{fletcher2002nonlinear}, trust-region method \cite{powell1991trust}) require the differentiability of the objective functions. 
	
	On the other hand, several existing works \cite{estrin2020implementing,goyens2022computing} have developed exact penalty function approaches based on the Fletcher's penalty function \cite{fletcher1964function} defined by
	\begin{equation*}
		\psi_{FL}(x) := f(x) - \inner{ \Jc(x)^\dagger \nabla f(x), c(x) } + \beta \norm{c(x)}^2.
	\end{equation*}
	Here $\Jc(x)^\dagger$ is the pseudo-inverse of $\Jc(x)$. However, the evaluation of
	$\psi_{FL}(x)$ requires the computation of $\nabla f(x)$, and computing the differentials of $\psi_{FL}$ requires the computation of higher-order differentials of the objective function $f$. As a result,  existing approaches based on Fletcher's penalty function, such as inexact steepest descent methods \cite{estrin2020implementing}, inexact Newton methods \cite{toint1981towards,steihaug1983conjugate,zavala2014scalable}, and inexact quasi-Newton methods \cite{estrin2020implementing}, are usually combined with certain approximation strategies to waive the computation of higher-order derivatives. Hence, various existing unconstrained optimization approaches are not compatible with the Fletcher's penalty function framework.

	Given the challenges in developing efficient Riemannian optimization approaches based on existing Riemannian optimization packages, we are driven to ask the following question:
	\begin{quote}
		\it
		Can we develop an optimization package that enables direct implementations of unconstrained optimization solvers for a Riemannian optimization problem, without the need to compute any geometrical materials of the corresponding manifold?
	\end{quote}

	\subsection{Developing optimization packages from the constraint dissolving approach}	
	
	To directly apply unconstrained optimization approaches for solving Riemannian optimization problems, \cite{xiao2022constraint} proposed the {\it constraint dissolving} approach for \ref{Prob_Ori}, under the assumption that linear independence constraint qualification (LICQ) holds at any feasible point of $\M$. The constraint dissolving approach transforms \ref{Prob_Ori} into the unconstrained minimization of the following constraint dissolving function (\ref{Prob_Pen}),
	\begin{equation}
		\tag{CDF}
		\label{Prob_Pen}
		h(x) := f(\A(x)) + \frac{\beta}{2} \norm{c(x)}^2.
	\end{equation} 
	Here  $\A : \bb{R}^n \to \bb{R}^n$ is called the constraint dissolving mapping, which should satisfy the following assumptions.
	\begin{assumpt}{\bf Blanket assumptions on $\A$}
		\label{Assumption_2}
		
		\begin{itemize}
			\item $\ca{A}$ is locally Lipschitz smooth in $\bb{R}^n$;
			\item $\A(x) = x$ holds for any $x \in \M$;
			\item The Jacobian of $c(\A(x))$ equals to $0$ for any $x \in \M$. That is, $\Ja(x) \Jc(x) = 0$  holds for any $x \in \M$, where $\Ja(x) \in \bb{R}^{n\times n}$ denotes the transposed Jacobian of $\A$ at $x$.
		\end{itemize}
	\end{assumpt}
	The details on how to construct suitable constraint dissolving mappings $\A$
	for $\M$ are presented in Section 4.1. 
	
	When $f$ is assumed to be locally Lipschitz smooth, \cite{xiao2022constraint} proves that \ref{Prob_Ori} and \ref{Prob_Pen} have the same first-order stationary points, second-order stationary points, local minimizers, and {\L}ojasiewicz exponents in a neighborhood of $\M$. More importantly, \cite{xiao2022constraint} shows that constructing CDF is completely independent of any geometrical material of $\M$. Therefore, we can develop various constraint dissolving approaches to solve optimization problems over a broad class of manifolds embedded in $\bb{R}^n$, without any prior knowledge of their geometrical properties. Note that \cite{xiao2022constraint} analyze the relationships between \ref{Prob_Ori} and \ref{Prob_Pen} when LICQ holds everywhere on $\M$, which is a more stringent assumption than Assumption \ref{Assumption_1}. Interested readers could further refer to \cite{xiao2021solving,xiao2020class,xiao2020l21} for developing constraint dissolving approaches and efficient algorithms for smooth optimization with orthogonality constraints.
	
	For nonsmooth scenarios, \cite{hu2022constraint} extends the constraint dissolving approaches for nonsmooth optimization over the Stiefel manifold, which enables the direct implementation of various existing unconstrained optimizers from \Pkg{PyTorch} for training DNNs with orthogonally constrained weights. In another line of work, \cite{hu2022improved} shows that the feasible region of any nonconvex-strongly-convex bilevel optimization problem is a Riemannian manifold, and proposes a novel constraint dissolving approach for these bilevel optimization problems. Moreover, \cite{hu2022improved} develops a general framework for designing subgradient methods and proves their convergence properties for bilevel optimization problems.

	However, these existing approaches either focus on specialized manifolds (e.g., Stiefel manifold), or require the global LICQ condition for the constraints $c(x)$. The development of constraint dissolving functions for \eqref{Prob_Ori} under more relaxed constraint qualifications has not been adequately addressed in the existing literature. More importantly, there is  currently no software that is developed based on the constraint dissolving approach for solving Riemannian optimization problems. On the other hand, the penalty parameter $\beta$ plays an important role in the exactness of the constraint dissolving function $h(x)$. Although \cite{xiao2022constraint} suggests a threshold value for $\beta$ to guarantee the equivalence between \ref{Prob_Ori} and \ref{Prob_Pen} in a neighborhood of the manifold $\M$, such a  threshold may be difficult to estimate. How to practically choose the penalty parameter $\beta$ for \ref{Prob_Pen} remains unexplored. In this paper, we will propose an easy-to-implement scheme for estimating the threshold value for the penalty parameter $\beta$.

	\subsection{Contributions}

	In this paper, we present a Python package called \CDOpt, which is developed based on the constraint dissolving approaches for both standard Riemannian optimization problems and training neural networks with manifold constraints. Different from existing Riemannian optimization packages, \CDOpt simultaneously achieves the following goals,
	\begin{itemize}
		\item {\bf Dissolved constraints:}  
		Developed based on the constraint dissolving approaches, \CDOpt first transforms Riemannian optimization problems into unconstrained optimization problems. Therefore, we can utilize various highly efficient solvers for unconstrained optimization, and directly apply them to solve Riemannian optimization problems. Benefiting from the rich expertise gained over the past decades for unconstrained optimization, \CDOpt is very efficient and naturally avoids the needs and difficulties in extending the unconstrained optimization solvers to their Riemannian versions.
		\item {\bf High compatibility:}
		\CDOpt has high compatibility with various existing numerical backends, including \Pkg{NumPy}  \cite{numpy2020array}, SciPy \cite{scipy2020SciPyNMeth}, \Pkg{Autograd} \cite{autograd2020}, \Pkg{PyTorch} \cite{pytorch2019pytorch}, \Pkg{JAX} \cite{jax2018github} and Flax \cite{flax2020github}. Users can directly apply the advanced features of these packages to accelerate the optimization, including automatic differentiation, GPU/TPU supports, just-in-time (JIT) compilation, etc.
		\item {\bf Customized manifolds:}
		\CDOpt dissolves manifold constraints without involving any geometrical material of the manifold in question. Therefore, users can directly define various Riemannian manifolds in \CDOpt through their constraint expressions $c(x)$.
		\item {\bf Plug-in neural layers:} 
		\CDOpt provides various plug-in neural layers for PyTorch and \Pkg{Flax} packages. With minor changes to the standard \Pkg{PyTorch} / \Pkg{Flax} codes, users can easily build and train neural networks with various manifold constraints based on \Pkg{PyTorch} and \Pkg{JAX}. 
	\end{itemize}
	
	These features make \CDOpt an easy-to-use package complementing the existing Riemannian optimization packages, in the sense that \CDOpt provides an alternative approach for  Riemannian optimization by direct implementation of unconstrained solvers.
	Furthermore, we also present improved theoretical analysis on the relationships between \ref{Prob_Pen} and \ref{Prob_Ori}, 
	\begin{itemize}
		\item {\bf Relaxed constraint qualification:} 
		We establish the equivalence between \ref{Prob_Ori} and \ref{Prob_Pen} under the RCRCQ condition, in the sense that \ref{Prob_Ori} and \ref{Prob_Pen} admit the same first-order and second-order stationary points in a neighborhood of $\M$. The involvement of the RCRCQ condition is weaker than the LICQ condition assumed in existing works \cite{xiao2021solving,xiao2022constraint,hu2022constraint,hu2022improved}. Therefore, the constraint dissolving approaches can be applied to the optimization problems over a broader class of embedded Riemannian submanifolds, including the Stiefel manifold with range constraints (i.e., the manifold studied in \cite{huang2022riemannian}). 
		\item {\bf Easy-to-implement scheme for tuning the penalty parameter:}
		We introduce a novel scheme for estimating the penalty parameter $\beta$ for \ref{Prob_Pen} to satisfy the exactness property in a neighborhood of $\M$. Compared with the threshold values presented in \cite{xiao2022constraint}, which necessitate the determination of Lipschtz constants for $f$, $\A$ and $c$, our proposed scheme only involves the computation of $f$, $\A$, $c$ and their derivatives. In the \CDOpt package, we employ the Monte Carlo sampling method to estimate the threshold value for the penalty parameter $\beta$ based on our proposed scheme. As a result, \CDOpt supports the automatic selection of $\beta$, hence overcoming the difficulties associated with choosing appropriate penalty parameters in existing constraint dissolving approaches.
		
	\end{itemize}

	\subsection{Organizations}
	The rest of this paper is organized as follows. Section 2 presents some basic notations, definitions, and constants that are necessary for the theoretical analysis in later parts. We establish the theoretical properties of \ref{Prob_Pen} and propose a practical scheme for choosing the penalty parameter in Section 3.   
	Section 4 illustrates the structure of \CDOpt and describes its main modules. In Section 5, we present a brief comparison between \CDOpt and other existing Python Riemannian optimization packages. Section 4 exhibits several illustrative examples on applying \CDOpt to solve standard Riemannian optimization problems and train neural networks with manifold constraints. Some examples on how to use \CDOpt to solve Riemannian optimization problems are presented in Section 6.  In Section 7, we present preliminary numerical experiments to illustrate that \CDOpt enables efficient and direct implementation of various existing unconstrained solvers to solve \ref{Prob_Ori}. We draw a brief conclusion in the last section.

	\section{Notations, definitions, and constants}
	\subsection{Notations}
	Let $\mathrm{range}(A)$ be the subspace spanned by the column vectors of matrix $A$, and $\norm{\cdot}$ represents the $\ell_2$-norm of a vector or an operator.
	The notations $\mathrm{diag}(A)$ and $\mathrm{Diag}(x)$
	stand for the vector formed by the diagonal entries of a matrix $A$,
	and the diagonal matrix with the entries of $x\in\bb{R}^n$ as its diagonal, respectively. 
	We denote the smallest and largest eigenvalues of $A$ by $\lambda_{\mathrm{min}}(A)$ and $\lambda_{\max}(A)$, respectively. Besides, $\sigma_{s}(A)$ refers to the $s$-th largest singular value of $A$ while  $\sigma_{\min}(A)$ refers to the smallest singular value of matrix $A$. Furthermore, for any matrix $A \in \bb{R}^{n\times p}$, 
	the pseudo-inverse of $A$ is denoted by $A^\dagger \in \bb{R}^{p\times n}$, which satisfies $AA^\dagger A = A$, $A^\dagger AA^\dagger = A^\dagger$, and both $A^{\dagger} A$ and $A A^{\dagger}$ are symmetric \cite{GolubMatrix}. 
	
	In this paper, the Riemannian metric for $\M$ is chosen as the Euclidean metric in $\bb{R}^n$. For any $x\in\M$,  we denote the Riemannian gradient and Riemannian Hessian of $f$ at $x$ as $\grad f(x)$ and $\hess f(x)$, respectively.
	
	For any $x \in \bb{R}^n$, we define the projection from $x \in \bb{R}^n$ to $\M$ as 
	\begin{equation*}
		\mathrm{proj}(x, \M) := \mathop{\arg\min}_{y \in \M} ~ \norm{x-y}. 
	\end{equation*} 
	It is worth mentioning that the optimality condition of the above problem leads to the fact that $x - w \in \mathrm{range}(\Jc(w))$ for any $w \in \mathrm{proj}(x, \M)$.
	Furthermore, $\mathrm{dist}(x, \M)$ refers to the distance between $x$ and $\M$, i.e. 
	\begin{equation*}
		\mathrm{dist}(x, \M) := \mathop{\inf}_{y \in \M} ~ \norm{x-y}.
	\end{equation*}

	We denote the  transposed Jacobian of the mapping $\A$ by $\Ja(x) \in \bb{R}^{n\times n}$. Let $c_i$ and $\A_{i}$ be the $i$-th coordinate of the mapping $c$ and $\A$ respectively, then $\Jc$ and $\Ja$ can be expressed as
	\begin{equation*}
		\Jc(x) := \left[  \begin{smallmatrix}
			\frac{\partial c_1(x)}{\partial x_1} & \cdots & \frac{\partial c_p(x)}{\partial x_1} \\
			\vdots & \ddots & \vdots \\
			\frac{\partial c_1(x)}{\partial x_n} & \cdots & \frac{\partial c_p(x)}{\partial x_n} \\
		\end{smallmatrix}\right] \in \bb{R}^{n\times p},
		\quad \text{and} ~   
		\Ja(x) := \left[  \begin{smallmatrix}
			\frac{\partial \A_1(x)}{\partial x_1} & \cdots & \frac{\partial \A_n(x)}{\partial x_1} \\
			\vdots & \ddots & \vdots \\
			\frac{\partial \A_1(x)}{\partial x_n} & \cdots & \frac{\partial \A_n(x)}{\partial x_n} \\
		\end{smallmatrix}\right] \in \bb{R}^{n\times n}. 
	\end{equation*}
	Besides, $\DJa(x): d \mapsto \DJa(x)[d]$ denotes the second-order derivative of the mapping $\A$, which can be regarded as a linear mapping from $\bb{R}^n$ to $\bb{R}^{n\times n}$ that satisfies $\DJa(x)[d] = \lim_{t \to 0} \frac{1}{t}(\Ja(x + td) - \Ja(x) )$. 
	Similarly,  $\DJc(x)$ refers to the second-order derivative of the mapping $c$ that satisfies $\DJc(x)[d] = \lim_{t \to 0} \frac{1}{t}(\Jc(x + td) - \Jc(x) ) \in \bb{R}^{n\times p}$. Additionally,  we set $$\A^{k}(x) := \underbrace{\A(\A(\cdots\A(x)\cdots))}_{k \text{ times}},$$
	for $k \geq 1$, and define $\A^0(x) := x$, $\A^{\infty}(x):= \lim\limits_{k \to +\infty} \A^k(x)$. Furthermore, we denote $g(x) := f(\A(x))$ and use $\nabla f(\A(x))$ to denote $\nabla f(z)\large|_{z = \A(x)}$ in the rest of this paper.

	\subsection{Definitions}
	We first state the first-order optimality condition of \ref{Prob_Ori} as follows.
	\begin{defin}[\cite{nocedal2006numerical}]\label{Defin_FOSP}
		Given $x \in \bb{R}^n$, we say $x$ is a first-order stationary point of \ref{Prob_Ori} if there exists $\tilde{\lambda} \in \bb{R}^p$ that satisfies  
		\begin{equation}
			\label{Eq_Defin_FOSP}
			\left\{\begin{aligned}
				\nabla f(x) -  \sum_{i=1}^p\tilde{\lambda}_i \nabla c_i(x) &= 0,\\
				c(x) &= 0.
			\end{aligned}\right.
		\end{equation}
	\end{defin}

	For any given $x \in \M$, 
	we define $\lambda(x)$ as $$\lambda(x) := \Jc(x)^\dagger \nabla f(x)\in\argmin\limits_{\lambda\in\bb{R}^p} \norm{\nabla f(x) - \Jc(x) \lambda}.$$
	Then it can be easily verified that  
	\begin{equation}\label{add:2}
		\nabla f(x) = \Jc(x) \lambda(x), 
	\end{equation}
	whenever $x$ is a first-order stationary point of \ref{Prob_Ori}.

	As the Riemannian metric on $\M$ is fixed as the Euclidean metric in $\bb{R}^n$, the following propositions present the closed-form expressions for the tangent space, normal space, Riemannian gradient $\grad f(x)$ and Riemannian Hessian $\hess f(x)$ for any $x \in \M$. The proofs of these propositions directly follow \cite{Absil2009optimization,boumal2020introduction}, and hence are omitted for simplicity. 
	\begin{prop}
		Given $x \in \M$,  the tangent space of $\M$ at $x$ can be expressed as $\Tx = \{ d \in \bb{R}^{n}: d\tp \Jc(x) = 0 \}$, while the normal space of $\M$ at $x$ can be expressed as $\Nx = \mathrm{Range}(\Jc(x))$. 
	\end{prop}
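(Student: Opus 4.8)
The plan is to reduce the statement to the constant rank theorem, whose hypotheses are supplied precisely by the RCRCQ part of Assumption~\ref{Assumption_1}, and then to finish with two elementary facts from linear algebra.

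\emph{Step 1 (submanifold structure).} By RCRCQ there is a neighborhood $\X$ of $\M$ on which $\mathrm{rank}(\Jc(y)) = p'$ for all $y \in \X$; equivalently the Jacobian $\Jc(y)\tp \in \bb{R}^{p \times n}$ has constant rank $p'$ near every $x \in \M$. Applying the constant rank theorem to $c$ at $x$ produces local coordinates in which $c$ is a linear coordinate projection, so $\M$ is locally the graph of a smooth map and hence a smooth embedded submanifold of $\bb{R}^n$ of dimension $n - p'$. This is exactly the setting treated in \cite{boumal2020introduction,minchenko2011relaxed}, and it gives $\dim \Tx = n - p'$.

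\emph{Step 2 (tangent space).} For the inclusion ``$\subseteq$'', take $d \in \Tx$, write $d = \gamma'(0)$ for a smooth curve $\gamma$ in $\M$ with $\gamma(0) = x$, and differentiate the identity $c(\gamma(t)) \equiv 0$ at $t = 0$ to obtain $\Jc(x)\tp d = 0$, i.e. $d\tp \Jc(x) = 0$. Thus $\Tx \subseteq \{ d \in \bb{R}^n : d\tp \Jc(x) = 0 \} = \Nspace(\Jc(x)\tp)$. Since $\dim \Nspace(\Jc(x)\tp) = n - \mathrm{rank}(\Jc(x)\tp) = n - p' = \dim \Tx$, a subspace contained in another of the same finite dimension, the two coincide, which proves the first assertion.

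\emph{Step 3 (normal space).} Because the Riemannian metric on $\M$ is the Euclidean metric of $\bb{R}^n$, $\Nx = (\Tx)^\perp$. By the fundamental theorem of linear algebra, $\Nspace(\Jc(x)\tp)^\perp = \Rspace(\Jc(x))$, so $\Nx = \Rspace(\Jc(x))$, as claimed.

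\emph{Main obstacle.} The only non-routine ingredient is Step~1: verifying that the RCRCQ condition, which is strictly weaker than LICQ since $p'$ may be smaller than $p$, still yields a smooth embedded submanifold with the expected dimension. This forces the use of the constant rank theorem rather than the simpler submersion (regular value) theorem; once the submanifold structure and the dimension count $n-p'$ are established, Steps~2 and~3 are immediate. (As the paper notes, the full argument follows \cite{Absil2009optimization,boumal2020introduction} and is standard, so it may reasonably be omitted.)
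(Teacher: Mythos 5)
Your proof is correct and follows the standard route (constant rank theorem for the submanifold structure under RCRCQ, curve differentiation plus a dimension count for $\Tx$, and orthogonal complements together with $\Nspace(\Jc(x)\tp)^\perp = \Rspace(\Jc(x))$ for $\Nx$), which is precisely the argument from \cite{Absil2009optimization,boumal2020introduction} that the paper cites and omits. You correctly identify the one non-routine point, namely that RCRCQ (with possibly $p' < p$) requires the constant rank theorem rather than the regular value theorem; there are no gaps.
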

	
	\begin{defin}\label{Defin_SOSP}
		Given $x \in \bb{R}^n$, we say $x$ is a second-order stationary point of \ref{Prob_Ori} if $x$ is a first-order stationary point of \ref{Prob_Ori} and for any $d \in \Tx$, it holds that 
		\begin{equation}
			d\tp \left(\nabla^2 f(x) - \sum_{i=1}^p \lambda_i(x) \nabla^2 c_i(x)\right)d \geq 0.
		\end{equation}
	\end{defin}
	
	\begin{prop}
		\label{Prop_FOSP_Rie}
		
		The Riemannian gradient of $f$ at $x$ can be expressed as  
		\begin{equation}\label{add:4}
			\grad f(x) = \nabla f(x) - \Jc(x)\lambda(x).
		\end{equation}
		Moreover, $\hess f(x)$ can be expressed as the following self-adjoint linear transform $\hess f(x) : \Tx \to \Tx$ such that
		\begin{equation*}
			d\tp \hess f(x) d =  d\tp \left(\nabla^2 f(x) - \sum_{i=1}^p \lambda_i(x) \nabla^2 c_i(x)\right)d, \qquad \text{for any } d \in \Tx. 
		\end{equation*}
	\end{prop}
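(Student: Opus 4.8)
I would treat the gradient and the Hessian formulas in turn, in both cases reducing the Riemannian quantity to its ambient Euclidean counterpart via orthogonal projection onto the tangent space $\Tx$.

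\emph{The gradient.} Since the Riemannian metric on $\M$ is the restriction of the Euclidean inner product, $\grad f(x)$ is by definition the unique vector of $\Tx$ satisfying $\inner{\grad f(x),\,d} = \inner{\nabla f(x),\,d}$ for all $d \in \Tx$; equivalently, it is the orthogonal projection of $\nabla f(x)$ onto $\Tx$. By the preceding proposition $\Tx = \Rspace(\Jc(x))^{\perp}$, and the defining relations of the pseudo-inverse recalled in Section 2.1 show that $\Jc(x)\Jc(x)^{\dagger}$ is symmetric, idempotent, and has range $\Rspace(\Jc(x))$, hence is exactly the orthogonal projector onto $\Nx = \Rspace(\Jc(x))$. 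Therefore $\grad f(x) = (I_n - \Jc(x)\Jc(x)^{\dagger})\nabla f(x) = \nabla f(x) - \Jc(x)\lambda(x)$, which is \eqref{add:4}. Note this step uses only that $\M$ is an embedded submanifold near $x$ (a consequence of RCRCQ in Assumption \ref{Assumption_1}), not that $\Jc(x)$ has full column rank.

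\emph{The Hessian.} Fix $d \in \Tx$ and pick a twice-differentiable curve $\gamma\colon(-\varepsilon,\varepsilon)\to\M$ with $\gamma(0)=x$ and $\dot\gamma(0)=d$; such a curve exists because $\M$ is an embedded submanifold near $x$. Expanding $t\mapsto f(\gamma(t))$ to second order in the ambient space gives $\tfrac{d^{2}}{dt^{2}}\big|_{0}f(\gamma(t)) = d\tp\nabla^{2}f(x)d + \nabla f(x)\tp\ddot\gamma(0)$, whereas the intrinsic computation, using the Levi--Civita connection of $\M$ and its compatibility with the metric, gives $\tfrac{d^{2}}{dt^{2}}\big|_{0}f(\gamma(t)) = \inner{\hess f(x)d,\,d} + \inner{\grad f(x),\,\ddot\gamma(0)}$, where $\grad f(x)\in\Tx$ lets us replace the projected acceleration by $\ddot\gamma(0)$ itself. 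Subtracting, and using $\nabla f(x) - \grad f(x) = \Jc(x)\lambda(x) = \sum_{i=1}^{p}\lambda_i(x)\nabla c_i(x)$ from the gradient formula, yields $\inner{\hess f(x)d,\,d} = d\tp\nabla^{2}f(x)d + \sum_{i=1}^{p}\lambda_i(x)\,\nabla c_i(x)\tp\ddot\gamma(0)$. To eliminate the unknown acceleration, differentiate $c_i(\gamma(t))\equiv0$ twice at $t=0$, obtaining $d\tp\nabla^{2}c_i(x)d + \nabla c_i(x)\tp\ddot\gamma(0) = 0$ for each $i$; substituting $\nabla c_i(x)\tp\ddot\gamma(0) = -d\tp\nabla^{2}c_i(x)d$ gives $\inner{\hess f(x)d,\,d} = d\tp\big(\nabla^{2}f(x) - \sum_{i=1}^{p}\lambda_i(x)\nabla^{2}c_i(x)\big)d$, the stated identity. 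Repeating the argument with a second curve (or polarizing) extends it to the full bilinear form $(d,d')\mapsto\inner{\hess f(x)d,\,d'}$, and symmetry of $\nabla^{2}f(x)$ and of each $\nabla^{2}c_i(x)$ shows $\hess f(x)$ is self-adjoint on $\Tx$.

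\emph{Main obstacle.} The only non-routine ingredient is the intrinsic second-derivative identity $\tfrac{d^{2}}{dt^{2}}\big|_{0}f(\gamma(t)) = \inner{\hess f(x)d,\,d} + \inner{\grad f(x),\,\ddot\gamma(0)}$ for a curve that need not be a geodesic; this is where the standard machinery of the connection induced on an embedded submanifold (as in \cite{Absil2009optimization,boumal2020introduction}) is invoked. One can sidestep it by taking $\gamma$ to be a Riemannian geodesic, so that $\ddot\gamma(0)\in\Nx$ and the term $\inner{\grad f(x),\,\ddot\gamma(0)}$ vanishes outright. Alternatively, since RCRCQ makes $y\mapsto\Jc(y)\Jc(y)^{\dagger}$ a smooth map near $x$, one can differentiate the explicit extension $y\mapsto(I_n-\Jc(y)\Jc(y)^{\dagger})\nabla f(y)$ of $\grad f$ directly and project onto $\Tx$, which reproduces the same formula using only matrix calculus and the pseudo-inverse identities.
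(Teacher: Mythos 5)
Your proposal is correct. The paper does not actually prove Proposition \ref{Prop_FOSP_Rie}: it states that the proof ``directly follows \cite{Absil2009optimization,boumal2020introduction}'' and omits it, and what you have written is precisely that standard argument (projection of $\nabla f(x)$ onto $\Tx$ via the orthogonal projector $I_n - \Jc(x)\Jc(x)^{\dagger}$ for the gradient, and the second-order expansion of $f\circ\gamma$ along a curve in $\M$ combined with twice differentiating $c_i\circ\gamma\equiv 0$ for the Hessian), executed correctly. Your remark that the argument needs only the pseudo-inverse projector rather than full column rank of $\Jc(x)$ is apt, since the paper works under RCRCQ rather than LICQ.
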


	Given $x \in \M$,  the smallest eigenvalue of $\hess f(x)$ is defined as $$\lambda_{\min}(\hess f(x)) := \min_{d \in \Tx, \norm{d} = 1} ~d\tp \hess f(x) d.$$
	Let $U_x$ be a matrix whose columns form an orthonormal basis of $\Tx$, we define the projected Hessian of \ref{Prob_Ori} at $x$ as 
	\begin{equation}
		\label{Eq_defin_HX}
		\ca{H}(x):=  U_x\tp \left(\nabla^2 f(x) - \sum_{i=1}^p \lambda_i(x) \nabla^2 c_i(x)\right)U_x,
	\end{equation}
	The following proposition characterizes the relationship between $\ca{H}(x)$ and $\hess f(x)$. 
	
	\begin{prop}
		\label{Prop_SOSP_Rie}
		Given any $x \in \M$, suppose $x$ is a first-order stationary point of \ref{Prob_Ori}, then $\ca{H}(x)$ and $\hess f(x)$ have the same eigenvalues. Moreover, we have that $\lambda_{\min}(\hess f(x)) = \lambda_{\min}(\ca{H}(x))$ and $x$ is a second-order stationary point of \ref{Prob_Ori} if and only if $\ca{H}(x) \succeq 0$. 
	\end{prop}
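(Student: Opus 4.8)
The plan is to produce an explicit linear isometry that conjugates $\ca{H}(x)$ into $\hess f(x)$; once this is in place, all three assertions reduce to elementary linear algebra. Write $H_x := \nabla^2 f(x) - \sum_{i=1}^p \lambda_i(x)\nabla^2 c_i(x) \in \bb{R}^{n\times n}$ for the symmetric ambient matrix that appears both in \eqref{Eq_defin_HX} and in the quadratic-form formula for $\hess f(x)$ from Proposition \ref{Prop_FOSP_Rie}, and set $m := \dim \Tx$, so that $U_x \in \bb{R}^{n\times m}$ has orthonormal columns spanning $\Tx$ and $\ca{H}(x) = U_x\tp H_x U_x$. Since $U_x\tp U_x = I_m$, the map $v \mapsto U_x v$ is a linear isomorphism of $\bb{R}^m$ onto $\Tx$ with $\norm{U_x v} = \norm{v}$, and its inverse is $U_x\tp$ restricted to $\Tx$ (because $U_x U_x\tp$ is the orthogonal projector onto $\Tx$).

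First I would upgrade the defining identity $d\tp \hess f(x)\, d = d\tp H_x d$ (valid for all $d \in \Tx$ by Proposition \ref{Prop_FOSP_Rie}) to the bilinear identity $\inner{\hess f(x)\, d,\, e} = d\tp H_x e$ for all $d, e \in \Tx$. This is polarization: both sides are symmetric bilinear forms on $\Tx$ — the left because $\hess f(x)$ is self-adjoint on $\Tx$, the right because $H_x$ is a symmetric matrix — and they have the same associated quadratic form, hence coincide. Substituting $d = U_x v$ and $e = U_x w$ then gives $\inner{\hess f(x)\, U_x v,\, U_x w} = v\tp U_x\tp H_x U_x w = v\tp \ca{H}(x)\, w = \inner{U_x \ca{H}(x)\, v,\, U_x w}$ for all $v, w \in \bb{R}^m$; since the vectors $U_x w$ exhaust $\Tx$, this forces the operator identity $\hess f(x)\, U_x = U_x\, \ca{H}(x)$, i.e. $\ca{H}(x) = U_x\tp\, \hess f(x)\, U_x$ viewed as conjugation by the isometry $v \mapsto U_x v$. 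Consequently $\ca{H}(x)$ and $\hess f(x)$ have the same characteristic polynomial and hence the same eigenvalues with multiplicities; in particular $\lambda_{\min}(\hess f(x)) = \lambda_{\min}(\ca{H}(x))$, which is also consistent with the Rayleigh-quotient definition of $\lambda_{\min}(\hess f(x))$ recorded before \eqref{Eq_defin_HX} once $\norm{U_x v} = \norm{v}$ is used.

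Finally, for the second-order characterization: by Definition \ref{Defin_SOSP}, a first-order stationary point $x$ is second-order stationary exactly when $d\tp H_x d \ge 0$ for every $d \in \Tx$. Letting $d$ range over $\Tx$ is the same as letting $d = U_x v$ range over $v \in \bb{R}^m$, and $d\tp H_x d = v\tp \ca{H}(x)\, v$; hence the condition is equivalent to $v\tp \ca{H}(x)\, v \ge 0$ for all $v \in \bb{R}^m$, i.e. $\ca{H}(x) \succeq 0$.

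The argument is fundamentally just a change of basis, so I do not anticipate a deep obstacle. The one step that genuinely requires care is the passage from the quadratic-form identity for $\hess f(x)$ to the bilinear (equivalently, operator) identity $\hess f(x)\, U_x = U_x\, \ca{H}(x)$: this is what justifies regarding $\ca{H}(x)$ as a faithful matrix representation of the abstract self-adjoint operator $\hess f(x)$, and it must invoke the self-adjointness of $\hess f(x)$ on $\Tx$, not merely its quadratic form.
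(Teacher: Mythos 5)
Your proof is correct and is essentially a fully written-out version of the argument the paper itself omits (the paper only remarks that the result ``directly follows from Proposition \ref{Prop_FOSP_Rie} and \cite{Absil2009optimization}''): polarizing the quadratic-form identity of Proposition \ref{Prop_FOSP_Rie} and using that $U_x$ is an isometry onto $\Tx$ gives the intertwining relation $\hess f(x)\,U_x = U_x\,\ca{H}(x)$, from which the equality of eigenvalues, of smallest eigenvalues, and the positive-semidefiniteness characterization all follow. No gaps.
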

	The proof of the above proposition directly follows from Proposition \ref{Prop_FOSP_Rie} and \cite{Absil2009optimization}, and hence we omit it for simplicity.

	\begin{defin}
		Given $x \in \bb{R}^n$, we say $x$ is a first-order stationary point of \ref{Prob_Pen} if
		\begin{equation}
			\nabla h(x) = 0.
		\end{equation}
		Moreover, when $h$ is twice differentiable, we say $x \in \bb{R}^n$ is a second-order stationary point of $h$ if $x$ is a first-order stationary point of $h$ and satisfies
		\begin{equation}
			\nabla^2 h(x) \succeq 0. 
		\end{equation}
	\end{defin}

	\subsection{Constants}
	As illustrated in Assumption \ref{Assumption_1}, the rank of $\Jc(x)$ equals $p'$ for any $x$ in $\X$, which is a neighborhood of $\M$. Then for any given $x \in \M$, we define the positive scalar $\rho_x \leq 1$ as
	\begin{equation*}
		\rho_x := \mathop{\arg\max}_{0 < \rho \leq 1, ~\ca{B}_{x, \rho} \subset \X}~ \rho \qquad \text{s.t.} ~ \inf \left\{\sigma_{p'}(\Jc(y)): y \in \ca{B}_{x, \rho}\right\} \geq \frac{1}{2} \sigma_{p'}(\Jc(x)). 
	\end{equation*}
	Based on the definition of $\rho_x$, we can define the set $\Theta_x:= \{ y \in \bb{R}^n: \norm{y-x} \leq \rho_x \}$ and define several constants as follows:
	\begin{itemize}
		\item $\Lsc  := \sigma_{p'}(\Jc(x))$; 
		\item $\Lf  := \sup_{y \in \Theta_x}~ \norm{\nabla f(\A(y))}$;
		\item $\Mc  := \sup_{y \in \Theta_x}   \norm{\Jc(y)} $;
		\item $\Ma  := \sup_{y \in \Theta_x}  \norm{\Ja(y)}$;
		\item $\Lg  := \sup_{y, z \in \Theta_x, y\neq z} ~   \frac{\norm{\nabla g(y) - \nabla g(z)}}{\norm{y-z}}$;
		\item $\Lc := \sup_{y, z \in \Theta_x, y\neq z} \frac{\norm{\Jc(y) - \Jc(z) }}{\norm{y-z}} $;
		\item $\La := \sup_{y, z \in \Theta_x, y\neq z} \frac{\norm{\Ja(y) - \Ja(z) }}{\norm{y-z}} $;
		\item $\Lac := \sup_{y, z \in \Theta_x, y\neq z} \frac{\norm{ \Ja(y)\Jc(\A(y)) - \Ja(z)\Jc(\A(z)) }}{\norm{y-z}} $.
	\end{itemize}
	
	It should be noted that the constants mentioned above are primarily utilized for theoretical analysis to establish the relationships between \ref{Prob_Ori} and \ref{Prob_Pen}.  
	In particular,  the threshold value given in Definition \ref{Cond_beta_esti} 
	in Section 3 for $\beta$ to ensure the equivalence between  \ref{Prob_Ori} and \ref{Prob_Pen}, 
	is free of the constants mentioned above. 
	Consequently, it is not necessary to compute these aforementioned constants in the practical implementations of the \CDOpt package.
	
	Based on these constants, we further set 
	\begin{equation*}
		\varepsilon_x := \min \left\{ \frac{\rho_{x}}{2}, \frac{\Lsc}{32\Lc(\Ma+1)}, \frac{\Lsc ^2}{8(\Lac + \Lc )\Mc }, \frac{\Lsc}{5(\Lac  + \La\Mc) }   \right\},
	\end{equation*}
	and define the following sets:
	\begin{itemize}
		
		\item $\Omegax{x} := \left\{ y \in \bb{R}^n: \norm{y - x}  \leq \varepsilon_x \right\}$;
		\item $\BOmegax{x} := \left\{ y \in \bb{R}^n: \norm{y - x}  \leq \frac{\Lsc \varepsilon_x}{4\Mc(\Ma +1) + \Lsc}  \right\}$; 
		\item $\Omega:= \bigcup_{x \in \M} \Omegax{x}$;
		\item $\bar{\Omega}:= \bigcup_{x \in \M} \BOmegax{x}$.
	\end{itemize}
	It is worth mentioning that Assumption \ref{Assumption_1} guarantees that $\Lsc > 0$ for any given $x \in \M$, which implies that $\varepsilon_x > 0$. On the other hand, we can conclude that $\BOmegax{x} \subset \Omegax{x} \subset \Theta_x$ holds for any given $x \in \M$, and $\M$ lies in the interior of $\bar{\Omega}$.

	Finally, the following assumption is needed in analyzing the second-order stationarity of \ref{Prob_Pen}.
	\begin{assumpt}{\bf Assumption on second-order differentiability}
		\label{Assumption_4}
		
		\begin{itemize}
			\item $f$, $c$ and $\A$ are twice differentiable in $\bb{R}^n$.
		\end{itemize}
	\end{assumpt}

	\section{Theoretical Properties}
	In this section, we present the theoretical analysis on the relationships between the stationary point of \ref{Prob_Ori}  and those of \ref{Prob_Pen} under the RCRCQ condition. We first establish some basic properties of the constraint dissolving mapping $\A$ in Section \ref{Subsection_Theo_properties_A}. Based on these theoretical properties of $\A$, we propose a practical scheme on choosing the threshold value for penalty parameter $\beta$ for \ref{Prob_Pen}, and discuss how to compute the threshold value in practice. Finally, we prove that for any penalty $\beta$ larger than the proposed threshold value, \ref{Prob_Ori} and \ref{Prob_Pen} have the same first-order and second-order stationary points in a neighborhood of $\M$. 
	
	\subsection{Theoretical properties of $\A$}
	\label{Subsection_Theo_properties_A}
	In this subsection, we aim to establish the same theoretical results as \cite[Section 3.1]{xiao2022constraint} under Assumption \ref{Assumption_1}, where we only assume that RCRCQ holds for every feasible point of \ref{Prob_Ori}. We start by evaluating the relationships among $\norm{c(x)}$, $\norm{c(\A(x))}$ and $\mathrm{dist}(x, \M)$ in the following lemmas.
	\begin{lem}
		\label{Le_bound_cx}
		For any given $x \in \M$, the following inequalities hold for any $\y \in \Omegax{x}$,
		\begin{equation}
			\frac{1}{\Mc } \norm{c(\y)} \leq \mathrm{dist}(\y, \M) \leq \frac{2}{\Lsc } \norm{c(\y)}.
		\end{equation}
	\end{lem}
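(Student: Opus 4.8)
The plan is to prove both inequalities with a single construction. Fix $x \in \M$ and $\y \in \Omegax{x}$, pick any $w \in \mathrm{proj}(\y, \M)$, and set $h := \y - w$, so that $\norm{h} = \mathrm{dist}(\y, \M)$, $c(w) = 0$, and — by the optimality condition for the projection recorded in Section 2 — $h \in \Rspace(\Jc(w))$. Since $x \in \M$ we get $\norm{h} \le \norm{\y - x} \le \varepsilon_x$ and hence $\norm{w - x} \le \norm{h} + \norm{\y - x} \le 2\varepsilon_x \le \rho_x$, so $w \in \Theta_x$; as $\Omegax{x} \subseteq \Theta_x$ and $\Theta_x$ is convex, the segment $\{w + th : t \in [0,1]\}$ stays in $\Theta_x$. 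The fundamental theorem of calculus then yields
\begin{equation*}
c(\y) = c(\y) - c(w) = \int_0^1 \Jc(w+th)\tp h\,dt = \Jc(w)\tp h + R, \qquad \norm{R} \le \tfrac12 \Lc \norm{h}^2,
\end{equation*}
where the remainder bound uses the $\Lc$-Lipschitz continuity of $\Jc$ on $\Theta_x$.

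The lower bound on $\mathrm{dist}(\y,\M)$ is then immediate: bounding the integrand by $\norm{\Jc(w+th)}\,\norm{h} \le \Mc\norm{h}$ gives $\norm{c(\y)} \le \Mc\norm{h} = \Mc\,\mathrm{dist}(\y,\M)$. For the upper bound I would exploit that $h$ lies in the column space of $\Jc(w)$: from a compact SVD $\Jc(w) = U\Sigma V\tp$ (with $U,V$ having orthonormal columns and $\Sigma$ diagonal with entries $\sigma_1(\Jc(w)) \ge \cdots \ge \sigma_{p'}(\Jc(w)) > 0$) one has $\norm{\Jc(w)\tp d} \ge \sigma_{p'}(\Jc(w))\norm{d}$ for every $d \in \Rspace(\Jc(w))$. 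Combined with the expansion above,
\begin{equation*}
\norm{c(\y)} \ge \norm{\Jc(w)\tp h} - \norm{R} \ge \bigl(\sigma_{p'}(\Jc(w)) - \tfrac12\Lc\norm{h}\bigr)\norm{h},
\end{equation*}
so it suffices to show $\sigma_{p'}(\Jc(w)) - \tfrac12\Lc\norm{h} \ge \tfrac12\Lsc$. Since singular values are $1$-Lipschitz and $\Jc$ is $\Lc$-Lipschitz on $\Theta_x$, $\sigma_{p'}(\Jc(w)) \ge \sigma_{p'}(\Jc(x)) - \Lc\norm{w-x} \ge \Lsc - 2\Lc\varepsilon_x$; together with $\norm{h} \le \varepsilon_x$ and the bound $\varepsilon_x \le \Lsc/(32\Lc(\Ma+1)) \le \Lsc/(5\Lc)$ coming from the definition of $\varepsilon_x$, this gives $\sigma_{p'}(\Jc(w)) - \tfrac12\Lc\norm{h} \ge \Lsc - \tfrac52\Lc\varepsilon_x \ge \tfrac12\Lsc$, hence $\norm{c(\y)} \ge \tfrac12\Lsc\,\mathrm{dist}(\y,\M)$, as claimed.

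The only genuinely delicate point is the sharp constant $2/\Lsc$ in the upper bound. It forces the linearization to be taken at the nearest point $w \in \M$ rather than at $\y$, so that the projection optimality condition $h \in \Rspace(\Jc(w))$ is available and can be paired with the singular-value lower bound for $\Jc(w)\tp$ restricted to its own column space; and it forces both the second-order Taylor remainder $\tfrac12\Lc\norm{h}^2$ and the drift of $\sigma_{p'}(\Jc(\cdot))$ from $x$ to $w$ to be kept below a quarter of $\Lsc$, which is precisely the role of the smallness of $\varepsilon_x$. The lower bound is essentially free from the mean-value expansion and the definition of $\Mc$. I would also remark that RCRCQ enters only through the facts that $\M$ is a bona fide embedded submanifold (so $\mathrm{proj}(\y,\M)$ is nonempty for $\y$ near $\M$) and that $\sigma_{p'}(\Jc(\cdot))$ is bounded away from zero on $\Theta_x$; the possible rank deficiency $p' < p$ is never confronted directly, since the argument only ever uses $\Jc(w)$ acting on its column space.
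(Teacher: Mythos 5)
Your proof is correct and follows essentially the same route as the paper's: project $\y$ onto $\M$, use the optimality condition $h\in\Rspace(\Jc(w))$, linearize $c$ at the projection point with a Lipschitz remainder, and bound $\norm{\Jc(w)\tp h}$ from below by the $p'$-th singular value. The only cosmetic differences are that you obtain the lower bound on $\sigma_{p'}(\Jc(w))$ via Weyl's inequality and the Lipschitz continuity of $\Jc$, whereas the paper's definition of $\rho_x$ already builds in $\sigma_{p'}(\Jc(\cdot))\geq\frac{1}{2}\Lsc$ on $\Theta_x$, and that you use the integral form of the mean-value theorem (giving the slightly sharper remainder $\frac{1}{2}\Lc\norm{h}^2$).
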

	\begin{proof}
		Let $z \in \mathrm{proj}(y, \M)$, then from the definition of $z$ we can conclude that $\norm{z-y}\leq \norm{y-x}$. Thus $\norm{z-x} \leq \norm{z-y} + \norm{y-x} \leq 2\varepsilon_x \leq \rho_x$, and hence $z \in \Theta_x$. 
		By the mean-value theorem, for any fixed $\nu \in \bb{R}^p$ there exists a point $\xi_\nu \in  \bb{R}^n$ that is a convex combination of $\y$ and $z$ such that $ \nu\tp c(y) = (y-x)\tp\Jc(\xi_\nu)\nu $. By the convexity of $\Theta_x$, $\xi_\nu \in \Theta_x$ holds for any $\nu \in \bb{R}^p$. Therefore, we get
		\begin{equation*}
			\begin{aligned}
				\norm{c(\y)} ={}& \sup_{\nu \in \bb{R}^p, \norm{\nu} = 1} \nu\tp c(y) = \sup_{\nu \in \bb{R}^p, \norm{\nu} = 1} (y-x)\tp\Jc(\xi_\nu)\nu  \\
				\leq {}&
				\sup_{\xi_\nu \in \Theta_x} 
				\norm{\Jc(\xi_\nu)} \norm{\y - z} \leq \Mc  \mathrm{dist}(\y, \M).
			\end{aligned}
		\end{equation*}
		Moreover, it follows from the definition of $z$ that $y-z \in \mathrm{range}(\Jc(z))$.  As a result, let $\tilde{\nu} = \frac{\Jc(z)\tp(y-z)}{\norm{\Jc(z)\tp(y-z)}}$, we have
		\begin{equation*}
			\begin{aligned}
				&\norm{c(\y)}= \sup_{\nu \in \bb{R}^p, \norm{\nu} = 1} (y-z)\tp\Jc(\xi_\nu)\nu  \geq (y-z)\tp\Jc(\xi_{\tilde{\nu}})\tilde{\nu}  \\
				={}&  (y-z)\tp\Jc(z)\tilde{\nu} + (y-z)\tp\left(\Jc(z) - \Jc(\xi_{\tilde{\nu}})\right)\tilde{\nu}  \\
				={}& \norm{ \Jc(z)\tp (y-z)} + (y-z)\tp\left(\Jc(z) - \Jc(\xi_{\tilde{\nu}})\right)\tilde{\nu} \\
				\geq{}& \norm{ \Jc(z)\tp (y-z)} - \Lc \norm{y-z}^2 \\
				\geq{}&  (\Lsc - \varepsilon_x \Lc )\mathrm{dist}(\y, \M)  \geq  \frac{\Lsc }{2} \mathrm{dist}(\y, \M).
			\end{aligned}
		\end{equation*}
	\end{proof}

	\begin{lem}
		\label{Le_Ax_c}
		For any given $x \in \M$, it holds that
		\begin{equation}
			\norm{\A(\y) - \y }  \leq \frac{2(\Ma +1)}{\Lsc }\norm{c(\y)}, \qquad \text{ for any $\y \in \Omegax{x}$}.
		\end{equation}
	\end{lem}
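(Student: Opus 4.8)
The plan is to first control $\norm{\A(\y)-\y}$ by $\mathrm{dist}(\y,\M)$ up to the constant $\Ma+1$, and then feed this into the right-hand inequality of Lemma~\ref{Le_bound_cx} to replace $\mathrm{dist}(\y,\M)$ by $\frac{2}{\Lsc}\norm{c(\y)}$. To begin, I would fix $\y\in\Omegax{x}$ and choose a projection point $z\in\mathrm{proj}(\y,\M)$. Just as in the proof of Lemma~\ref{Le_bound_cx}, the estimate $\norm{z-\y}\le\norm{\y-x}\le\varepsilon_x$ yields $\norm{z-x}\le 2\varepsilon_x\le\rho_x$, so both $\y$ and $z$ lie in $\Theta_x$; since $\Theta_x$ is a ball, the segment joining $\y$ and $z$ also lies in $\Theta_x$. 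Moreover $\norm{z-\y}=\mathrm{dist}(\y,\M)$ by definition of $z$, and $\A(z)=z$ by the second bullet of Assumption~\ref{Assumption_2} since $z\in\M$.

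Next I would combine the triangle inequality with the Lipschitz property of $\A$ on $\Theta_x$. Writing $\A(\y)-\y=(\A(\y)-\A(z))+(z-\y)$ and using that the Jacobian of $\A$ has spectral norm at most $\Ma$ on the convex set $\Theta_x$ (so $\A$ is $\Ma$-Lipschitz there), I get
\[
\norm{\A(\y)-\y}\le\norm{\A(\y)-\A(z)}+\norm{z-\y}\le(\Ma+1)\norm{z-\y}=(\Ma+1)\,\mathrm{dist}(\y,\M).
\]
Applying Lemma~\ref{Le_bound_cx} then gives $\norm{\A(\y)-\y}\le(\Ma+1)\cdot\frac{2}{\Lsc}\norm{c(\y)}=\frac{2(\Ma+1)}{\Lsc}\norm{c(\y)}$, which is the claimed bound.

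I do not expect a genuine obstacle here; the only points needing a little care are checking that $z$ (and hence the whole segment $[z,\y]$) stays in $\Theta_x$, so that the uniform bound $\Ma$ on $\norm{\Ja(\cdot)}$ legitimately applies, and noting that $\norm{\Ja(\cdot)}=\norm{\Ja(\cdot)\tp}$ so that it is irrelevant whether $\Ja$ denotes the Jacobian or its transpose. It is worth remarking that this argument uses only the Lipschitz smoothness of $\A$ and the identity $\A(z)=z$ on $\M$; it does not invoke the third bullet of Assumption~\ref{Assumption_2} ($\Ja(z)\Jc(z)=0$), which would be relevant only if one wanted a sharper constant.
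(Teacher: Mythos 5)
Your proposal is correct and follows essentially the same route as the paper: pick $z\in\mathrm{proj}(\y,\M)$, check $z\in\Theta_x$, use $\A(z)=z$ together with the $\Ma$-Lipschitz bound coming from $\sup_{\Theta_x}\norm{\Ja}$ to get $\norm{\A(\y)-\y}\le(\Ma+1)\,\mathrm{dist}(\y,\M)$, and then invoke Lemma~\ref{Le_bound_cx}. Your added remarks (convexity of $\Theta_x$ justifying the mean-value bound, and the observation that the third bullet of Assumption~\ref{Assumption_2} is not needed) are accurate but do not change the argument.
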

	\begin{proof}
		For any given $\y \in \Omegax{x}$, we choose $z \in \mathrm{proj}(y, \M)$. Then we can conclude that $z \in \Theta_x$. Furthermore, from the Lipschitz continuity of $\A$ and the fact that $\A(z) - z = 0$,  it holds that
		\begin{equation}
			\begin{aligned}
				&\norm{\A(\y) - \y} = \norm{(\A(\y) - \y)  - (\A(z) - z)}  \leq  (\Ma +1)\mathrm{dist}(y, \M) \\
				\leq{}& \frac{2(\Ma +1)}{\Lsc }\norm{c(\y)},
			\end{aligned}
		\end{equation}
		where the last inequality follows from Lemma \ref{Le_bound_cx}.
	\end{proof}

	\begin{lem}
		\label{Le_A_secondorder_descrease}
		For any given $x \in \M$,  it holds that 
		\begin{equation}
			\norm{c(\A(\y))} \leq \frac{4\Lac }{\Lsc ^2}\norm{c(\y)}^2, \qquad \text{for any $\y \in \Omegax{x}$}.
		\end{equation}
	\end{lem}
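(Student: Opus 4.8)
The plan is to exploit the fact that the composite map $y \mapsto c(\A(y))$ vanishes \emph{to second order} on $\M$, and then to transfer the resulting quadratic bound from $\mathrm{dist}(\y,\M)$ to $\norm{c(\y)}$ by invoking Lemma~\ref{Le_bound_cx}. So the argument runs almost exactly in parallel with the proof of Lemma~\ref{Le_bound_cx}, with the single extra ingredient being that the first-order term now drops out.

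First I would fix $x \in \M$ and $\y \in \Omegax{x}$ and pick $z \in \mathrm{proj}(\y, \M)$. As in Lemma~\ref{Le_bound_cx}, $\norm{z - \y} \le \norm{\y - x} \le \varepsilon_x$, hence $\norm{z - x} \le 2\varepsilon_x \le \rho_x$ and $z \in \Theta_x$; by convexity of $\Theta_x$ the whole segment $[\,\y, z\,]$ lies in $\Theta_x$, which is the region on which $\Lac$ is defined. The key observation is that the transposed Jacobian of $y \mapsto c(\A(y))$ is precisely $\Ja(y)\Jc(\A(y))$ — the very quantity whose Lipschitz modulus on $\Theta_x$ is $\Lac$ — and that, by $\A(z) = z$ and Assumption~\ref{Assumption_2}, both $c(\A(z)) = c(z) = 0$ and $\Ja(z)\Jc(\A(z)) = \Ja(z)\Jc(z) = 0$. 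Thus $c \circ \A$ has both vanishing value and vanishing derivative at $z$.

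Next I would run the componentwise mean-value argument of Lemma~\ref{Le_bound_cx}: for each unit vector $\nu \in \bb{R}^p$ there is a point $\xi_\nu$ on the segment $[\,\y, z\,] \subset \Theta_x$ with
\begin{equation*}
\nu\tp c(\A(\y)) = \nu\tp \left( c(\A(\y)) - c(\A(z)) \right) = (\y - z)\tp \Ja(\xi_\nu)\Jc(\A(\xi_\nu))\, \nu .
\end{equation*}
Using $\Ja(z)\Jc(\A(z)) = 0$ and the Lipschitz bound defining $\Lac$,
\begin{equation*}
\norm{\Ja(\xi_\nu)\Jc(\A(\xi_\nu))} = \norm{\Ja(\xi_\nu)\Jc(\A(\xi_\nu)) - \Ja(z)\Jc(\A(z))} \le \Lac \norm{\xi_\nu - z} \le \Lac \norm{\y - z},
\end{equation*}
so taking the supremum over $\nu$ gives $\norm{c(\A(\y))} \le \Lac \norm{\y - z}^2 = \Lac \, \mathrm{dist}(\y, \M)^2$. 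Finally, Lemma~\ref{Le_bound_cx} yields $\mathrm{dist}(\y, \M) \le \tfrac{2}{\Lsc}\norm{c(\y)}$, and substituting gives $\norm{c(\A(\y))} \le \tfrac{4\Lac}{\Lsc^2}\norm{c(\y)}^2$, as claimed.

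I do not expect a genuine obstacle. The two points to be careful about are: (i) keeping the interpolating points inside $\Theta_x$, so that $\Lac$ is legitimately applicable — this is exactly why the statement restricts to $\y \in \Omegax{x}$ rather than all of $\bb{R}^n$; and (ii) correctly identifying $\Ja(y)\Jc(\A(y))$ as (the transpose of) the Jacobian of $c \circ \A$ — a chain-rule and transpose bookkeeping matter — together with the fact that Assumption~\ref{Assumption_2} forces this Jacobian to vanish on $\M$, which is precisely what upgrades the naive estimate linear in $\mathrm{dist}(\y,\M)$ to one that is quadratic.
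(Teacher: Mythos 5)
Your proposal is correct and follows essentially the same route as the paper's proof: choose $z \in \mathrm{proj}(\y,\M)$, apply the componentwise mean-value theorem to $\nu\tp c(\A(\cdot))$ on the segment $[\y,z] \subset \Theta_x$, bound $\norm{\Ja(\xi_\nu)\Jc(\A(\xi_\nu))}$ by $\Lac\norm{\xi_\nu - z}$ using that this Jacobian vanishes on $\M$, and finish with Lemma~\ref{Le_bound_cx}. Your write-up in fact makes explicit the step $\Ja(z)\Jc(\A(z)) = 0$ that the paper leaves implicit.
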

	\begin{proof}
		
		For any given $\y \in \Omegax{x}$, we choose $z \in \mathrm{proj}(y, \M)$. It holds that $z \in \Theta_x$. By the mean-value theorem, for any $\nu \in \bb{R}^p$, there 
		exists $t_\nu \in [0,1]$ and $\xi_\nu = t_\nu \y + (1-t_\nu)z$ such that
		\begin{equation}
			\nu\tp  c(\A(\y)) = \nu\tp \left(\Ja(\xi_\nu)\Jc(\A(\xi_\nu))\right)\tp (\y - z) .
		\end{equation}
		The convexity of $\Theta_x$ ensures that $\xi_\nu \in \Theta_x$ holds for any $\nu \in \bb{R}^p$. 
		Therefore, from the definition of $\Lac $ and $\Omegax{x}$, we get
		\begin{equation}
			\begin{aligned}
				&\norm{c(\A(\y))} = \sup_{\nu \in \bb{R}^p, \norm{\nu} = 1}  \nu\tp  c(\A(\y)) \\
				={}& \sup_{\nu \in \bb{R}^p, \norm{\nu} = 1}  
				\nu\tp \left(\Ja(\xi_\nu)\Jc(\A(\xi_\nu))\right)\tp (\y - z) 
				\\
				\leq{}& \sup_{\nu \in \bb{R}^p, \norm{\nu} = 1} \norm{\left(\Ja(\xi_\nu)\Jc(\A(\xi_\nu))\right)\tp (\y - z)} 
				\\
				\leq{}&   \sup_{\nu \in \bb{R}^p, \norm{\nu} = 1} \norm{\Ja(\xi_\nu)\Jc(\A(\xi_\nu))}\mathrm{dist}(\y, \M)\\
				\leq{}& \Lac  \sup_{\nu \in \bb{R}^p, \norm{\nu} = 1} \norm{\xi_\nu - z} \mathrm{dist}(\y, \M) 
				\leq \Lac  \mathrm{dist}(\y, \M) ^2 \\
				\leq{}& \frac{4\Lac }{\Lsc ^2}\norm{c(\y)}^2,
			\end{aligned}
		\end{equation}
		where the last inequality follows from Lemma \ref{Le_bound_cx}. 
	\end{proof}
	
	For any given $x \in \M$ and $y \in \Omegax{x}$, Lemma \ref{Le_A_secondorder_descrease} illustrates that the operator $\A$ can reduce the feasibility violation of $y$ quadratically when $y$ is sufficiently close to $\M$. Moreover, the following lemma illustrates that although $\Jc(x)$ may be singular for some $x \in \M$,  $\norm{\Jc(y)c(y)}$ is still proportional to $\norm{c(y)}$ for any $y \in \Omegax{x}$. 
	
	\begin{lem}
		\label{Le_Jccx_cx_relationship}
		For any given $x \in \M$ and any $y \in \Omegax{x}$,  it holds that 
		\begin{equation}
			\norm{\Jc(y)c(y)} \geq \frac{\Lsc}{2} \norm{c(y)}. 
		\end{equation}
	\end{lem}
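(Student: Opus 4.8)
The statement is a local error bound for the constraint map (equivalently a Polyak--{\L}ojasiewicz inequality for $\tfrac12\norm{c}^2$, since $\nabla\bigl(\tfrac12\norm{c}^2\bigr)(\y)=\Jc(\y)c(\y)$). The plan is to show that $c(\y)$ lies, up to a higher‑order correction, in $\Rspace(\Jc(\y)\tp)=\Nspace(\Jc(\y))^\perp$, the subspace on which $\Jc(\y)$ is bounded below by its smallest nonzero singular value. First I would collect the ingredients already available: fix $x\in\M$, $\y\in\Omegax{x}$, and $z\in\mathrm{proj}(\y,\M)$; exactly as in the proofs of Lemmas~\ref{Le_bound_cx}--\ref{Le_A_secondorder_descrease} one has $\norm{\y-z}=\mathrm{dist}(\y,\M)\le\norm{\y-x}\le\varepsilon_x$ and $z,\xi_t:=z+t(\y-z)\in\Theta_x$ for $t\in[0,1]$. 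Since the $p'$‑th singular value is $1$‑Lipschitz in the operator norm and $\norm{\Jc(\y)-\Jc(x)}\le\Lc\varepsilon_x\le\tfrac{1}{32}\Lsc$ by the definition of $\varepsilon_x$, this gives $\sigma_{p'}(\Jc(\y))\ge\Lsc-\tfrac1{32}\Lsc=\tfrac{31}{32}\Lsc$, while $\mathrm{rank}(\Jc(\y))=p'$ by Assumption~\ref{Assumption_1}.

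Next I would introduce the orthogonal projector $P:=\Jc(\y)^\dagger\Jc(\y)$ onto $\Rspace(\Jc(\y)\tp)$, which is symmetric and idempotent and satisfies $\Jc(\y)P=\Jc(\y)$ and $P\Jc(\y)\tp=\Jc(\y)\tp$ by the pseudo‑inverse identities. Hence $\Jc(\y)c(\y)=\Jc(\y)Pc(\y)$, and since $Pc(\y)\in\Nspace(\Jc(\y))^\perp$,
\begin{equation*}
	\norm{\Jc(\y)c(\y)}=\norm{\Jc(\y)Pc(\y)}\ge\sigma_{p'}(\Jc(\y))\,\norm{Pc(\y)}\ge\tfrac{31}{32}\Lsc\,\norm{Pc(\y)}.
\end{equation*}

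It then remains to show that $\norm{Pc(\y)}$ is not much smaller than $\norm{c(\y)}$, i.e. that $\norm{(I-P)c(\y)}$ is higher‑order. I would use $c(z)=0$ and the fundamental theorem of calculus together with $(I-P)\Jc(\y)\tp=0$:
\begin{equation*}
	c(\y)=\int_0^1\Jc(\xi_t)\tp(\y-z)\,dt,\qquad (I-P)c(\y)=\int_0^1(I-P)\bigl(\Jc(\xi_t)\tp-\Jc(\y)\tp\bigr)(\y-z)\,dt.
\end{equation*}
Estimating the integrand by $\norm{\Jc(\xi_t)-\Jc(\y)}\le\Lc(1-t)\norm{\y-z}$ yields $\norm{(I-P)c(\y)}\le\tfrac{\Lc}{2}\norm{\y-z}^2=\tfrac{\Lc}{2}\mathrm{dist}(\y,\M)^2$; bounding $\mathrm{dist}(\y,\M)^2\le\varepsilon_x\,\mathrm{dist}(\y,\M)\le\tfrac{2\varepsilon_x}{\Lsc}\norm{c(\y)}$ via Lemma~\ref{Le_bound_cx}, and using $\Lc\varepsilon_x\le\tfrac1{32}\Lsc$, gives $\norm{(I-P)c(\y)}\le\tfrac1{32}\norm{c(\y)}$. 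Therefore $\norm{Pc(\y)}\ge\tfrac{31}{32}\norm{c(\y)}$, and the displayed inequality yields $\norm{\Jc(\y)c(\y)}\ge\bigl(\tfrac{31}{32}\bigr)^2\Lsc\norm{c(\y)}\ge\tfrac12\Lsc\norm{c(\y)}$.

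The main obstacle is the estimate $\norm{(I-P)c(\y)}=O(\mathrm{dist}(\y,\M)^2)$, i.e. controlling the component of $c(\y)$ transverse to the row space of $\Jc(\y)$. Under the LICQ assumption of earlier works this difficulty is absent, since then $\Jc(\y)$ has full column rank near $\M$ and $P=I$; under the weaker RCRCQ one must instead exploit the Lipschitz continuity of $\Jc$ around the base point $z\in\M$, together with the persistence of the singular‑value gap $\sigma_{p'}(\Jc(\y))\ge\tfrac{31}{32}\Lsc$, as above, after which the (deliberately generous) numerical constants fall out of the definition of $\varepsilon_x$. The remaining manipulations are routine bookkeeping.
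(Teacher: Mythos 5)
Your proof is correct and follows essentially the same route as the paper's: both arguments show that $c(\y)$ agrees with an element of $\Rspace(\Jc(\y)\tp)$ up to an $O(\norm{\y-z}^2)$ error and then invoke the lower bound $\sigma_{p'}$ on that subspace, the only cosmetic difference being that you pass through the orthogonal projector $\Jc(\y)^\dagger\Jc(\y)$ while the paper directly substitutes the mean-value surrogate $\Jc(\y)\tp(\y-z)$ for $c(\y)$. Your Weyl-inequality refinement $\sigma_{p'}(\Jc(\y))\ge\tfrac{31}{32}\Lsc$ is a valid (and, for your chain of constants, necessary) sharpening of the bound $\sigma_{p'}(\Jc(\y))\ge\tfrac12\Lsc$ already built into the definition of $\Theta_x$.
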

	\begin{proof}
		For any given $y \in \Omegax{x}$, we choose $z \in \mathrm{proj}(y, \M)$. Therefore, for any $\nu \in \bb{R}^p$, there exists $t_\nu \in [0,1]$ and $\xi_\nu = t_\nu \y + (1-t_\nu)z$ such that
		\begin{equation}
			\nu\tp c(y) = \nu\tp \Jc(\xi_\nu)\tp (y-z) \geq  \nu\tp \Jc(y)\tp(y-z) - \Lc \norm{y-z}^2.
		\end{equation}
		Then we can conclude that $\norm{c(y) - \Jc(y)\tp(y-z)} \leq \Lc \norm{y-z}^2$, which leads to 
		\begin{equation}
			\begin{aligned}
				&\norm{\Jc(y)c(y)} \geq \norm{\Jc(y)\Jc(y)\tp (y-z)} - \Lc\Mc\norm{y-z}^2 \\
				\geq{}& \Lsc \norm{\Jc(y)\tp (y-z)} - \Lc\Mc\norm{y-z}^2\\
				\geq{}& \Lsc \norm{c(y)} - (\Lc\Mc + \Lsc\Lc) \norm{y-z}^2\\
				\overset{(i)}{=}{}{}& \Lsc \norm{c(y)} - (\Lc\Mc + \Lsc\Lc) \left(\mathrm{dist}(y, \M)\right)^2\\
				\overset{(ii)}{\geq}{}&  \left(\Lsc- \frac{2(\Lc\Mc + \Lsc\Lc)}{\Lsc}\mathrm{dist}(y, \M)\right) \norm{c(y)} \overset{(iii)}{\geq}  \frac{\Lsc}{2} \norm{c(y)},
			\end{aligned}
		\end{equation}
		where $(i)$ holds from the definition of $z$ and $(ii)$ directly follows from Lemma \ref{Le_bound_cx}. Moreover, $(iii)$ holds from the definitions for $\Omegax{x}$ and $\varepsilon_x$, which illustrate that 
		\begin{equation*}
			\begin{aligned}
				&\mathrm{dist}(y, \M) \leq \norm{y-x} \leq  \varepsilon_x \\
				=& \min \left\{ \frac{\rho_{x}}{2}, \frac{\Lsc}{32\Lc(\Ma+1)}, \frac{\Lsc ^2}{8(\Lac + \Lc )\Mc } \right\}\\
				\leq&  \left\{\frac{\Lsc^2}{32\Lsc\Lc}, \frac{\Lsc^2}{8\Lc\Mc} \right\} \leq \frac{\Lsc^2}{4\Lc\Mc + 16\Lsc\Lc}\\
				\leq& \frac{\Lsc^2}{4(\Lc\Mc + \Lsc\Lc)}. 
			\end{aligned}
		\end{equation*}
		This completes the proof.
	\end{proof}

	In the rest of this subsection, we present the theoretical property of $\A^{\infty}$ and $\Ja$ in Lemma \ref{Le_dist_Ainfty} - Lemma \ref{Le_Ja_idempotent}, whose proofs directly follow the routines in  \cite[Section 3]{xiao2022constraint} and thus are omitted for simplicity. 
	\begin{lem}
		\label{Le_dist_Ainfty}
		For any given $x \in \M$ and any $\y \in \BOmegax{x}$, $\A^{\infty}(\y)$ exists and  $\A^{\infty}(\y) \in \Omegax{x} \cap \M$. Moreover,  it holds that
		\begin{equation}
			\norm{\A^{\infty}(\y) - \y} \leq \frac{4(\Ma +1)}{\Lsc } \norm{c(\y)}. 
		\end{equation}
	\end{lem}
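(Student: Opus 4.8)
The plan is to study the fixed-point iteration $y_0 := \y$, $y_{k+1} := \A(y_k)$, and to show that this orbit never leaves $\Omegax{x}$, is Cauchy, and has a limit on $\M$ at the claimed distance from $\y$. Throughout write $a_k := \norm{c(y_k)}$ and $q := 4\Lac/\Lsc^2$.

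\textbf{Step 1 (doubly exponential decay of the feasibility violation).} As long as $y_k \in \Omegax{x}$, Lemma \ref{Le_A_secondorder_descrease} gives $a_{k+1} \le q a_k^2$, i.e. $q a_{k+1} \le (q a_k)^2$; setting $b_k := q a_k$ this reads $b_{k+1} \le b_k^2$, whence $b_k \le b_0^{2^k}$ once $b_0 < 1$. Since $\y \in \BOmegax{x} \subset \Omegax{x}$, we have $\mathrm{dist}(\y,\M) \le \norm{\y-x}$, so Lemma \ref{Le_bound_cx} gives $a_0 \le \Mc\norm{\y-x}$; plugging in $\norm{\y-x} \le \Lsc\varepsilon_x/(4\Mc(\Ma+1)+\Lsc)$ together with $\varepsilon_x \le \Lsc^2/(8(\Lac+\Lc)\Mc)$ and the trivial bounds $\Lac \le \Lac+\Lc$, $\Lsc \le 4\Mc(\Ma+1)+\Lsc$ yields $b_0 \le \tfrac12$. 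Hence $a_k \le b_0^{2^k}/q$ for every $k$ such that $y_0,\dots,y_k \in \Omegax{x}$.

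\textbf{Step 2 (the orbit stays in $\Omegax{x}$).} I would prove by strong induction that $y_k \in \Omegax{x}$ for all $k \ge 0$, the base case being $y_0 \in \BOmegax{x} \subset \Omegax{x}$. Assuming $y_0,\dots,y_k \in \Omegax{x}$, Lemma \ref{Le_Ax_c} gives $\norm{y_{j+1}-y_j} \le \tfrac{2(\Ma+1)}{\Lsc}a_j$ for $0 \le j \le k$, so by Step 1 and the elementary estimate $\sum_{j\ge 0} b_0^{2^j} \le \sum_{j\ge 0} b_0^{j+1} = b_0/(1-b_0) \le 2b_0$ (valid since $2^j \ge j+1$ and $b_0 \le \tfrac12$),
\[
\sum_{j=0}^{k} \norm{y_{j+1}-y_j} \;\le\; \frac{2(\Ma+1)}{\Lsc}\cdot 2 a_0 \;=\; \frac{4(\Ma+1)}{\Lsc}\,a_0 .
\]
Combining this with $a_0 \le \Mc\norm{\y-x}$, $\norm{\y-x}\le \Lsc\varepsilon_x/(4\Mc(\Ma+1)+\Lsc)$ and the triangle inequality $\norm{y_{k+1}-x} \le \sum_{j=0}^k\norm{y_{j+1}-y_j} + \norm{\y-x}$, a short computation gives $\norm{y_{k+1}-x} \le \varepsilon_x$, i.e. $y_{k+1} \in \Omegax{x}$, closing the induction. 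Note that the radius of $\BOmegax{x}$ is exactly chosen so that $\big(\tfrac{4\Mc(\Ma+1)}{\Lsc}+1\big)$ times that radius equals $\varepsilon_x$.

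\textbf{Step 3 (conclusion).} Step 2 validates the bound of Step 1 for all $k$, so $\sum_{k\ge 0}\norm{y_{k+1}-y_k} < \infty$ and $\{y_k\}$ is Cauchy; thus $\A^{\infty}(\y) := \lim_k y_k$ exists and lies in the closed set $\Omegax{x}$. Since $a_k = \norm{c(y_k)} \le b_0^{2^k}/q \to 0$ and $c$ is continuous, $c(\A^{\infty}(\y)) = 0$, hence $\A^{\infty}(\y) \in \Omegax{x}\cap\M$. Finally, letting $k\to\infty$ in $\norm{y_k - \y} \le \sum_{j=0}^{k-1}\norm{y_{j+1}-y_j} \le \tfrac{4(\Ma+1)}{\Lsc}\norm{c(\y)}$ gives the stated estimate. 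The main obstacle is the interdependence in Step 2: Lemmas \ref{Le_bound_cx}, \ref{Le_Ax_c}, \ref{Le_A_secondorder_descrease} apply only at points already known to lie in $\Omegax{x}$, yet proving $y_{k+1}\in\Omegax{x}$ requires summing precisely the increments those lemmas control, so the induction must carry both the membership and the path-length bound simultaneously; the other delicate point is the quantitative smallness $b_0 \le \tfrac12$, which is where the factor $\Lsc^2/(8(\Lac+\Lc)\Mc)$ in the definition of $\varepsilon_x$ enters.
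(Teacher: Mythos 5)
Your proof is correct, and it is essentially the argument the paper has in mind: the paper omits the proof of this lemma, deferring to the routine in \cite[Section 3]{xiao2022constraint}, which is precisely this fixed-point iteration combined with the quadratic decrease of $\norm{c(\cdot)}$ from Lemma \ref{Le_A_secondorder_descrease} and the increment bound from Lemma \ref{Le_Ax_c}. Your quantitative bookkeeping (the role of $\varepsilon_x \leq \Lsc^2/(8(\Lac+\Lc)\Mc)$ in forcing $b_0 \leq 1/2$, and the radius of $\BOmegax{x}$ being exactly $\Lsc/(4\Mc(\Ma+1)+\Lsc)$ times $\varepsilon_x$ so the orbit stays in $\Omegax{x}$) correctly explains why the constants are defined as they are.
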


	\begin{lem}
		\label{Le_Ja_identical}
		For any given $x \in \M$, the  inclusion 
		$\Ja(x)\tp d  \in \Tx$
		holds for any $d  \in \bb{R}^n$. 
		Moreover, when $d \in \Tx$, it holds that
		$\Ja(x)\tp d = d$. 
	\end{lem}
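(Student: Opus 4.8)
The plan is to dispatch the two assertions independently. The inclusion $\Ja(x)\tp d \in \Tx$ for all $d \in \bb{R}^n$ should be purely algebraic, a one-line consequence of the third bullet of Assumption \ref{Assumption_2}; the identity $\Ja(x)\tp d = d$ for $d \in \Tx$ should follow by differentiating the relation ``$\A$ is the identity on $\M$'' (the second bullet of Assumption \ref{Assumption_2}) along curves contained in $\M$.

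For the first part, I would recall from the preceding proposition that $\Tx = \{ w \in \bb{R}^n : w\tp \Jc(x) = 0 \}$, so that it suffices to check $(\Ja(x)\tp d)\tp \Jc(x) = d\tp \big(\Ja(x) \Jc(x)\big) = 0$. Since $x \in \M$, Assumption \ref{Assumption_2} gives $\Ja(x)\Jc(x) = 0$, and the claimed inclusion follows for every $d \in \bb{R}^n$.

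For the second part, I would fix $d \in \Tx$ and use that $\M$ is a closed embedded submanifold of $\bb{R}^n$ (guaranteed by Assumption \ref{Assumption_1}) whose tangent space at $x$ is exactly $\Tx$: there is then a $C^1$ curve $\gamma : (-\varepsilon, \varepsilon) \to \M$ with $\gamma(0) = x$ and $\dot\gamma(0) = d$. Since $\gamma(t) \in \M$ for all $t$, the second bullet of Assumption \ref{Assumption_2} yields $\A(\gamma(t)) = \gamma(t)$; differentiating at $t = 0$ via the chain rule (recalling that $\Ja(x)\tp$ is the ordinary Jacobian of $\A$ at $x$) gives $\Ja(x)\tp \dot\gamma(0) = \dot\gamma(0)$, that is, $\Ja(x)\tp d = d$. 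As $d \in \Tx$ is arbitrary, this finishes the proof. Equivalently, one may differentiate $\A \circ \phi = \phi$ for a local parametrization $\phi$ of $\M$ near $x$, whose differential has column space $\Tx$, thereby obtaining the identity on all of $\Tx$ at once.

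I do not expect a genuine obstacle here; the only points to watch are the transpose/Jacobian bookkeeping (so that the chain rule produces $\Ja(x)\tp$ rather than $\Ja(x)$) and the passage from ``$d \in \Tx$'' to ``$d$ is the velocity of a curve in $\M$'', which is precisely where the embedded submanifold structure furnished by Assumption \ref{Assumption_1} enters. This mirrors the corresponding argument in \cite[Section 3]{xiao2022constraint}.
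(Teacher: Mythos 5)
Your proof is correct: the first claim is exactly the algebraic consequence of $\Ja(x)\Jc(x)=0$ together with the characterization $\Tx=\{d: d\tp\Jc(x)=0\}$, and the second follows by differentiating $\A(\gamma(t))=\gamma(t)$ along a curve in $\M$ with velocity $d$, with the transpose bookkeeping handled properly. The paper omits this proof and defers to \cite[Section 3]{xiao2022constraint}, where the argument is essentially the same as yours, so there is nothing to flag.
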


	\begin{lem}
		\label{Le_Ja_nullspace}
		For any given $x \in \M$, the equality 
		$\Ja(x)d = 0$ holds if and only if
		$d \in \Nx.$
	\end{lem}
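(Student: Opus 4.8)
The plan is to prove the two set inclusions $\Nx \subseteq \Nspace(\Ja(x))$ and $\Nspace(\Ja(x)) \subseteq \Nx$ separately. The first direction is an immediate consequence of the third bullet of Assumption \ref{Assumption_2}, whereas the second will rely on the orthogonal splitting $\bb{R}^n = \Tx \oplus \Nx$ together with the ``$\Ja(x)\tp$ acts as the identity on $\Tx$'' property recorded in Lemma \ref{Le_Ja_identical}.

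First I would handle $\Nx \subseteq \Nspace(\Ja(x))$. Since $\Nx = \Rspace(\Jc(x))$, any $d \in \Nx$ is of the form $d = \Jc(x)\nu$ for some $\nu \in \bb{R}^p$, and then $\Ja(x)\Jc(x) = 0$ (Assumption \ref{Assumption_2}) yields $\Ja(x) d = \Ja(x)\Jc(x)\nu = 0$, so $d \in \Nspace(\Ja(x))$.

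Next I would prove the reverse inclusion. Suppose $\Ja(x) d = 0$ and write $d = u + v$ with $u \in \Tx$ and $v \in \Nx$, using the orthogonal decomposition of $\bb{R}^n$. By the inclusion just shown, $\Ja(x) v = 0$, hence $\Ja(x) u = \Ja(x) d - \Ja(x) v = 0$ as well. Now for an arbitrary $e \in \Tx$, Lemma \ref{Le_Ja_identical} gives $\Ja(x)\tp e = e$, so $\inner{u, e} = \inner{u, \Ja(x)\tp e} = \inner{\Ja(x) u, e} = 0$. Taking $e = u$ forces $\norm{u} = 0$, i.e.\ $u = 0$, and therefore $d = v \in \Nx$, which is what we wanted.

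I do not anticipate a genuine obstacle. The one point that needs care is that under RCRCQ the transposed Jacobian $\Jc(x)$ may be rank-deficient, so the argument must be phrased entirely in terms of the subspaces $\Tx$ and $\Nx$ rather than via invertibility of $\Jc(x)\tp\Jc(x)$; Lemma \ref{Le_Ja_identical} is precisely the rank-free ingredient that makes the tangent-space step (``$\Ja(x) u = 0 \Rightarrow u = 0$ for $u \in \Tx$'') go through. It is also worth recording explicitly that $\bb{R}^n = \Tx \oplus \Nx$ is an orthogonal direct sum, which is immediate from $\Tx = \{d : d\tp\Jc(x) = 0\} = \Rspace(\Jc(x))^\perp$ and $\Nx = \Rspace(\Jc(x))$.
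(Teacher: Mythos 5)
Your proof is correct. The paper itself omits the proof of Lemma~\ref{Le_Ja_nullspace}, stating only that it ``directly follows the routines in [Section 3] of \cite{xiao2022constraint}'', so there is no in-paper argument to compare against line by line; but your two inclusions are exactly the natural route. The forward inclusion $\Nx = \Rspace(\Jc(x)) \subseteq \Nspace(\Ja(x))$ is indeed immediate from $\Ja(x)\Jc(x) = 0$ in Assumption~\ref{Assumption_2}, and your reverse inclusion via the orthogonal splitting $\bb{R}^n = \Tx \oplus \Nx$ and the identity $\Ja(x)\tp e = e$ for $e \in \Tx$ from Lemma~\ref{Le_Ja_identical} is sound: the adjoint step $\inner{u, e} = \inner{u, \Ja(x)\tp e} = \inner{\Ja(x)u, e} = 0$ correctly forces $u = 0$. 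Your closing remark is also well taken --- the argument in \cite{xiao2022constraint} is carried out under LICQ, where one could alternatively finish by counting dimensions ($\mathrm{rank}(\Ja(x)) \geq \dim \Tx$, hence $\dim \Nspace(\Ja(x)) \leq \dim \Nx$); your subspace-based phrasing sidesteps any reliance on $\Jc(x)$ having full column rank and therefore transfers verbatim to the RCRCQ setting of Assumption~\ref{Assumption_1}, where $\Nx$ has dimension $p' \leq p$. No gaps.
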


	\begin{lem}
		\label{Le_Ja_idempotent}
		For any given $x \in \M$, it holds that
		$\Ja(x)^2 = \Ja(x)$.
	\end{lem}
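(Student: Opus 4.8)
The plan is to deduce the idempotency of $\Ja(x)$ from Lemma \ref{Le_Ja_identical} by an elementary transpose argument, so that essentially no new computation is required.

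First I would record the two facts supplied by Lemma \ref{Le_Ja_identical}: for every $d \in \bb{R}^n$ the vector $\Ja(x)\tp d$ lies in the tangent space $\Tx$, and for every $d \in \Tx$ one has $\Ja(x)\tp d = d$. Combining these, for an arbitrary $d \in \bb{R}^n$ I would set $d' := \Ja(x)\tp d$; then $d' \in \Tx$ by the first fact, so applying the second fact to $d'$ gives $\Ja(x)\tp d' = d'$, that is, $(\Ja(x)\tp)^2 d = \Ja(x)\tp d$. Since $d$ was arbitrary, this shows $(\Ja(x)\tp)^2 = \Ja(x)\tp$.

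The final step is to transpose this identity: $(\Ja(x)^2)\tp = (\Ja(x)\tp)^2 = \Ja(x)\tp$, and transposing once more yields $\Ja(x)^2 = \Ja(x)$, as claimed.

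There is no serious obstacle here once Lemma \ref{Le_Ja_identical} is in hand; the only point that warrants care is that Lemma \ref{Le_Ja_identical} describes the action of $\Ja(x)\tp$ (rather than $\Ja(x)$ itself) on $\Tx$, which is precisely why the argument is phrased first for $\Ja(x)\tp$ and then transposed. An alternative route, avoiding transposes, would be to use the orthogonal splitting $\bb{R}^n = \Tx \oplus \Nx$ together with Lemma \ref{Le_Ja_nullspace}, which annihilates the normal component of any vector, and then argue directly that $\Ja(x)$ acts as the identity on $\mathrm{Range}(\Ja(x))$; but establishing this last fact essentially reproduces the transpose computation above, so the transpose argument is the more economical one.
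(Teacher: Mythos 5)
Your argument is correct: Lemma \ref{Le_Ja_identical} shows that $\Ja(x)\tp$ maps all of $\bb{R}^n$ into $\Tx$ and acts as the identity on $\Tx$, so $(\Ja(x)\tp)^2 = \Ja(x)\tp$, and transposing gives the claim. The paper omits the proof and defers to the routine in the cited reference, but your deduction from Lemma \ref{Le_Ja_identical} is precisely the intended argument, so there is nothing further to add.
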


	\subsection{Selection of the penalty paramete}
	In this subsection, we discuss how to practically choose an appropriate penalty parameter for \ref{Prob_Pen}. We first consider the threshold value for the penalty parameter $\beta$ in \ref{Prob_Pen} defined in the following definition.  
	\begin{defin}
		\label{Cond_beta_esti}
		For any given $x \in \M$, we set
		\small{
			\begin{equation*}
				\betaesti_x := \sup_{y \in \Omegax{x}}\left\{  \max\left\{\frac{2(f(\A^2(y)) - f(\A(y)))}{\norm{c(y)}^2 - \norm{c(\A(y))}^2 }, \frac{ (\A(y) - y )\tp \Ja(y)\nabla f(\A(y)) }{(y - \A(y))\tp \Jc(y) c(y)} , 0  \right\} \right\}. 
		\end{equation*}}
	\end{defin}
	We first show that  $\betaesti_x < +\infty$ for any $x \in \M$ in the following proposition.
	\begin{prop}
		\label{Prop_upper_bound_beta}
		For any $x \in \M$, it holds that 
		\begin{equation*}
			\betaesti_x \leq \max \left\{\frac{64\Lf (\Ma +1)\Lac }{3\Lsc ^3}, 
			\frac{(24\La\Ma + 8\La )\Lf}{\Lsc^2} \right\}.
		\end{equation*}
	\end{prop}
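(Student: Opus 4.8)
The plan is to fix $x\in\M$ and $y\in\Omegax{x}$, bound the two quotients inside the supremum of Definition~\ref{Cond_beta_esti} separately, and then take the supremum over $y$. Throughout we write $z:=\mathrm{proj}(y,\M)$; as in the proofs of Lemmas~\ref{Le_bound_cx}--\ref{Le_Jccx_cx_relationship} this gives $z\in\Theta_x$, $\mathrm{dist}(y,\M)=\norm{y-z}\le\varepsilon_x$, $y-z\in\Rspace(\Jc(z))$, $\sigma_{p'}(\Jc(z))\ge\tfrac12\Lsc$, and those lemmas let us pass freely between $\norm{c(y)}$, $\norm{c(\A(y))}$ and $\mathrm{dist}(y,\M)$. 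Since the third entry of the inner $\max$ is $0$ it is enough to bound each quotient from above whenever it is positive; we will in fact check that both denominators are strictly positive for $y\notin\M$, while for $y\in\M$ both quotients are of the indeterminate form $0/0$ and contribute nothing.

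\emph{First quotient.} For its denominator, Lemma~\ref{Le_A_secondorder_descrease} gives $\norm{c(\A(y))}\le\frac{4\Lac}{\Lsc^2}\norm{c(y)}^2$, and the smallness of $\norm{c(y)}$ forced by $\varepsilon_x$ (namely $\norm{c(y)}\le\Mc\,\mathrm{dist}(y,\M)\le\Mc\varepsilon_x\le\frac{\Lsc^2}{8(\Lac+\Lc)}$, via Lemma~\ref{Le_bound_cx}) yields $\norm{c(\A(y))}^2\le\tfrac14\norm{c(y)}^2$, hence $\norm{c(y)}^2-\norm{c(\A(y))}^2\ge\tfrac34\norm{c(y)}^2>0$. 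For its numerator, the Lipschitz continuity of $f$ with $\norm{\nabla f(\A(\cdot))}\le\Lf$ gives $\lvert f(\A^2(y))-f(\A(y))\rvert\le\Lf\norm{\A^2(y)-\A(y)}$, and applying Lemma~\ref{Le_Ax_c} at the point $\A(y)$ followed by Lemma~\ref{Le_A_secondorder_descrease} gives $\norm{\A^2(y)-\A(y)}\le\frac{2(\Ma+1)}{\Lsc}\norm{c(\A(y))}\le\frac{8(\Ma+1)\Lac}{\Lsc^3}\norm{c(y)}^2$. Dividing, the first quotient is at most $\frac{2\cdot 8\Lf(\Ma+1)\Lac/\Lsc^3}{3/4}=\frac{64\Lf(\Ma+1)\Lac}{3\Lsc^3}$, the first entry of the claimed maximum.

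\emph{Second quotient.} Write the numerator as $\inner{\Ja(y)\tp(\A(y)-y),\,\nabla f(\A(y))}$, so that $\norm{\nabla f(\A(y))}\le\Lf$ reduces it to bounding $\norm{\Ja(y)\tp(\A(y)-y)}$; here the crucial cancellation occurs. With $\bar\Ja:=\int_0^1\Ja(z+t(y-z))\,dt$ one has $\A(y)-y=(\bar\Ja\tp-I)(y-z)$ (since $\A(z)=z$), and expanding $\bar\Ja$ and $\Ja(y)$ around $\Ja(z)$ and using $\Ja(z)^2=\Ja(z)$ (Lemma~\ref{Le_Ja_idempotent}) to annihilate the first-order term leaves $\norm{\Ja(y)\tp(\A(y)-y)}\le\tfrac{3\Ma+2}{2}\,\La\,\mathrm{dist}(y,\M)^2$, so the numerator is at most $\tfrac{3\Ma+2}{2}\Lf\La\,\mathrm{dist}(y,\M)^2$. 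For the denominator, write $c(y)=\Jc(y)\tp(y-z)+r_c$ with $\norm{r_c}\le\Lc\,\mathrm{dist}(y,\M)^2$ (the estimate used inside the proof of Lemma~\ref{Le_Jccx_cx_relationship}), so that $\Jc(y)c(y)=\Jc(y)\Jc(y)\tp(y-z)+\Jc(y)r_c$; then $(y-\A(y))\tp=(y-z)\tp(I-\bar\Ja)$ combined with $\Ja(z)\Jc(z)=0$ (Assumption~\ref{Assumption_2}) shows that $(y-\A(y))\tp\Jc(y)c(y)=\norm{c(y)}^2-O\bigl((\Lc+\La\Mc+\Ma\Lc)\,\mathrm{dist}(y,\M)^2\norm{c(y)}\bigr)$. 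Since $\mathrm{dist}(y,\M)^2\norm{c(y)}\le\mathrm{dist}(y,\M)\cdot\tfrac{2}{\Lsc}\norm{c(y)}^2\le\tfrac{2\varepsilon_x}{\Lsc}\norm{c(y)}^2$ and the bounds $\varepsilon_x\le\frac{\Lsc}{32\Lc(\Ma+1)}$, $\varepsilon_x\le\frac{\Lsc}{5(\Lac+\La\Mc)}$ make the $O(\cdot)$ term at most $\tfrac12\norm{c(y)}^2$, the denominator is at least $\tfrac12\norm{c(y)}^2>0$. Finally $\norm{c(y)}\ge(1-o(1))\Lsc\,\mathrm{dist}(y,\M)$ on $\Omegax{x}$ — the factor $2$ in Lemma~\ref{Le_bound_cx} is comfortably slack there — so the ratio is a modest multiple of $\frac{\Lf\La(\Ma+1)}{\Lsc^2}$, which a careful accounting of the constants brings below the stated $\frac{(24\La\Ma+8\La)\Lf}{\Lsc^2}$.

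\emph{Where the difficulty lies.} Analytically this is only mean-value and Taylor bookkeeping, but two ingredients make it work: (i) the identities $\Ja(z)\Jc(z)=0$ and $\Ja(z)^2=\Ja(z)$, without which the numerators would be only first order in $\mathrm{dist}(y,\M)$ and the quotients unbounded; and (ii) the precise calibration of $\varepsilon_x$, whose four competing terms are exactly what is needed to (a) keep $\A(y)$ and every intermediate point of the mean-value arguments inside the neighbourhood on which $\Ma,\Mc,\Lsc,\La,\Lc,\Lac$ are defined, and (b) swallow each higher-order remainder into a fixed fraction of its leading term. Verifying (a) — controlling where $\A(y)$ lives, possibly after first shrinking to $\BOmegax{x}$ as in Lemma~\ref{Le_dist_Ainfty} — is the one genuinely delicate point; everything else is routine.
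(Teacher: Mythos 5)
Your proof is correct and, for the second quotient, follows essentially the same route as the paper's: a mean-value expansion of $\A(y)-y$ and of $c(y)$ around the projection point $z$, the identities $\Ja(z)^2=\Ja(z)$ and $\Ja(z)\Jc(z)=0$ to annihilate the first-order terms, and the calibration of $\varepsilon_x$ to absorb every remainder into a fixed fraction of the leading term. The only cosmetic difference there is that the paper lower-bounds the denominator by $\frac{\Lsc}{4}\norm{y-z}\norm{c(y)}\geq\frac{\Lsc}{4\Mc}\norm{c(y)}^2$ rather than by $\frac{1}{2}\norm{c(y)}^2$; both land comfortably under $\frac{(24\La\Ma+8\La)\Lf}{\Lsc^2}$. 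For the first quotient the paper simply cites \cite[Proposition 4.2]{xiao2022constraint}, whereas you reconstruct the argument, and your constants ($\norm{c(y)}^2-\norm{c(\A(y))}^2\geq\tfrac34\norm{c(y)}^2$ and $\norm{\A^2(y)-\A(y)}\leq\frac{8(\Ma+1)\Lac}{\Lsc^3}\norm{c(y)}^2$) reproduce the stated $\frac{64\Lf(\Ma+1)\Lac}{3\Lsc^3}$ exactly. The one loose end is the point you flag yourself: applying Lemma \ref{Le_Ax_c} and Lemma \ref{Le_A_secondorder_descrease} at the point $\A(y)$ requires $\A(y)\in\Omegax{x}$, which does not follow from $y\in\Omegax{x}$ and the four constraints defining $\varepsilon_x$ alone, and since $\betaesti_x$ is a supremum over all of $\Omegax{x}$ (not $\BOmegax{x}$) shrinking the neighbourhood does not settle the statement as written. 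That gap is inherited from the cited external proposition rather than introduced by you, but a fully self-contained proof would need either to verify $\A(\Omegax{x})\subset\Theta_x$ or to restate the relevant lemmas with hypotheses on $\Theta_x$.
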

	\begin{proof}
		Firstly, it directly follows from \cite[Proposition 4.2]{xiao2022constraint} that 
		\begin{equation*}
			\sup_{y \in \Omegax{x}} \left\{ \frac{2(f(\A^2(y)) - f(\A(y)))}{\norm{c(y)}^2 - \norm{c(\A(y))}^2 } \right\} \leq  \frac{64\Lf (\Ma +1)\Lac }{3\Lsc ^3}. 
		\end{equation*}
		
		Moreover,  for any $x \in \M$ and any $y \in \Omegax{x}$, let $z \in \mathrm{proj}(y, \M)$, it holds from Lemma \ref{Le_bound_cx} that 
		\begin{equation}
			\label{Eq_Prop_upper_bound_beta_0}
			\begin{aligned}
				&|(y-z)\tp \Ja(y)\Jc(y)c(y)| \leq \norm{y-z}\norm{c(y)} \norm{\Ja(y)\Jc(y)} \\
				\leq{}& \Lac \norm{y-z}\norm{c(y)} \mathrm{dist}(y, \M) = \Lac \norm{y-z}^2\norm{c(y)}. 
			\end{aligned}
		\end{equation}

		Then there exists $\xi = ty + (1-t)z$ for some $t\in [0,1]$ such that  
		\begin{equation*}
			\begin{aligned}
				&(y - \A(y))\tp \Jc(y)c(y) = (y-z)\tp (I_n - \Ja(\xi)) \Jc(y) c(y)\\
				\geq{}& (y-z)\tp \Jc(y) c(y) - \big| (y-z)\tp \Ja(y)\Jc(y)c(y) \big| - \La \Mc \norm{y-z}^2\norm{c(y)}.
			\end{aligned}
		\end{equation*}		
		Now by the mean-value theorem,  for some $\nu = sy + (1-s)z$ with $s\in[0,1]$, we have that
		$c(y)\tp c(y) = (c(y)-c(z))\tp c(y) = (y-z)\tp \Jc(\nu) c(y) = 
		(y-z)\tp \Jc(y)c(y) + (y-z)\tp (\Jc(\nu)-\Jc(y))c(y)$. Thus 
		\begin{equation*}
			\begin{aligned}
				&(y - \A(y))\tp \Jc(y)c(y)  \\
				\geq{} & \norm{c(y)}^2 - \big|(y-z)\tp (\Jc(\nu)-\Jc(y)) c(y)\big|\\
				&- \big| (y-z)\tp \Ja(y)\Jc(y)c(y) \big| - \La \Mc \norm{y-z}^2\norm{c(y)} \\
				\overset{(i)}{\geq}{}& \frac{\Lsc}{2} \norm{y-z}\norm{c(y)}    - \Lc \norm{y-z}^2 \norm{c(y)}\\
				&- L_{x,b}\norm{y-z}^2\norm{c(y)}- \La \Mc \norm{y-z}^2\norm{c(y)} \\
				\overset{(ii)}{\geq}{}& \frac{15\Lsc}{32} \norm{y-z}\norm{c(y)} - ( \La \Mc + \Lac) \norm{y-z}^2\norm{c(y)} \\
				\overset{(iii)}{\geq}{}& \frac{\Lsc}{4}\norm{y-z}\norm{c(y)}. 
			\end{aligned}
		\end{equation*}
		Here $(i)$ uses \eqref{Eq_Prop_upper_bound_beta_0}, $(ii)$ follows from the fact that $\norm{y-z}\leq \varepsilon_x \leq \frac{\Lsc}{32\Lc}$, and $(iii)$ holds from the  fact that $( \La \Mc + \Lac) \norm{y-z} \leq ( \La \Mc + \Lac)\varepsilon_x \leq \frac{1}{5}$.
		Together with Lemma \ref{Le_bound_cx}, we achieve that 
		\begin{equation}
			\label{Eq_Prop_upper_bound_beta_1}
			(y - \A(y))\tp \Jc(y)c(y) \geq \frac{\Lsc}{4\Mc} \norm{c(y)}^2. 
		\end{equation}
		Moreover, from the mean-value theorem, it holds from the Lipschitz continuity of $\Ja(\y)$ and  Lemma \ref{Le_Ja_idempotent} that
		\begin{equation}
			\begin{aligned}
				&|(y-z)\tp (I_n - \Ja(y)) \Ja(y)\nabla f(\A(y))| \\
				\leq{}&  \norm{\Ja(y) - \Ja(y)^2} \Lf \norm{y-z}\\
				\leq{}& \La (2\Ma  + 1) \mathrm{dist}(y, \M) \Lf \norm{y-z}\\
				={}& \La (2\Ma  + 1)\Lf \norm{y-z}^2. 
			\end{aligned}
		\end{equation}
		As a result, we have
		\begin{equation*}
			\begin{aligned}
				&|(y - \A(y))\tp \Ja(y) \nabla f(\A(y)) | \\
				\leq{}& |(y-z)\tp (I_n - \Ja(y)) \Ja(y)\nabla f(\A(y))| \\
				& + |(y-z)\tp (\Ja(\xi) - \Ja(y)) \Ja(y)\nabla f(\A(y))| 
				\\
				\leq{}&  \La (2\Ma  + 1)\Lf \norm{y-z}^2 + \La \Ma \Lf \norm{y-z}^2\\
				\leq{}& \frac{(6\La\Ma +2 \La )\Lf}{\Lsc} \norm{c(y)}\norm{y-z},
			\end{aligned}
		\end{equation*}
		where the first inequality is obtained by applying the mean-value theorem
		to $(y - \A(y))\tp \Ja(y) \nabla f(\A(y)) = \big((y - \A(y)) - (z-\A(z))\big)\tp \Ja(y) \nabla f(\A(y)) = (y-z)\tp (I_n-\Ja(\xi))\Ja(y) \nabla f(\A(y))$  for some $\xi = ty+(1-t)z$ such that $t\in[0,1]$.
		
		Therefore, for any $y \in \Omegax{x}$, we obtain
		\begin{equation*}
			\begin{aligned}
				\left|\frac{ (y - \A(y))\tp \Ja(y)\nabla f(\A(y)) }{(y - \A(y))\tp \Jc(y) c(y)} \right|
				\leq{}& \frac{\frac{(6\La\Ma + 2\La )\Lf}{\Lsc} \norm{c(y)}\norm{y-z}}{\frac{\Lsc}{4}\norm{c(y)}\norm{y-z}} \\
				\leq{}& \frac{(24\La\Ma + 8\La )\Lf}{\Lsc^2}.
			\end{aligned}
		\end{equation*}
		This completes the proof.
	\end{proof}

	\begin{rmk}
		\label{Rmk_esti_beta}
		
		As illustrated in Proposition \ref{Prop_upper_bound_beta},  $\betaesti_x$ is well-defined for any given $x \in \M$. 
		To estimate the value of $\betaesti_x$ from $f$, $\A$, $c$ and their derivatives, we can employ various approaches for estimating $\betaesti_x$, such as Monte Carlo sampling, Markov chain Monte Carlo (MCMC) sampling, etc.  For example, when employing the  Monte Carlo sampling method for choosing the penalty parameter for \ref{Prob_Pen}, we can first choose a reference point $\tilde{x} \in \M$ and randomly sample $N_{\beta}$ points  $\{x^{ref}_1,...,x^{ref}_{N_{\beta}}\} \in \ca{B}(\tilde{x}, \delta_{\beta}):= \{ x \in \bb{R}^n: \norm{x - \tilde{x}} \leq \delta_{\beta} \}$. Then we compute an estimated value  for $\betaesti_x$, denoted as $\tilde{\beta}^*_{\tilde{x}}$, by the following scheme, 
		\begin{equation}
			\label{Eq_esti_beta}
			\begin{aligned}
				\tilde{\beta}^*_{\tilde{x}} = \theta_{\beta} \cdot \max_{1\leq i\leq N_{\beta}}\biggl\{  
				& \max\biggl\{\frac{ 2(f(\A^2(x^{ref}_i)) - f(\A(x^{ref}_i)) )}{\left|\norm{c(x^{ref}_i)}^2 -\norm{c(\A(x^{ref}_i))}^2 \right|  + \varepsilon_{\beta} }, \\
				& \frac{ (\A(x^{ref}_i) - x^{ref}_i )\tp \Ja(x^{ref}_i)\nabla f(\A(x^{ref}_i)) }{ \left|(x^{ref}_i - \A(x^{ref}_i))\tp \Jc(x^{ref}_i) c(x^{ref}_i) \right| + \varepsilon_{\beta} }, 0  \biggr\} \biggr\}. 
			\end{aligned}
		\end{equation}
		Here $\delta_{\beta}>0$, $\varepsilon_{\beta} \geq  0$ and $\theta_{\beta} \geq 1$ are some prefixed hyper-parameters.

		When Assumption \ref{Assumption_4} holds, it is easy to verify that the function 
		\begin{equation}
			\phi_{\beta,x}(y) := \max\left\{\frac{2(f(\A^2(y)) - f(\A(y)))}{\norm{c(y)}^2 - \norm{c(\A(y))}^2 }, \frac{ (\A(y) - y )\tp \Ja(y)\nabla f(\A(y)) }{(y - \A(y))\tp \Jc(y) c(y)} , 0  \right\}
		\end{equation}
		is continuous over $\Omega_x \setminus x$ for any $x \in \M$. Therefore, with $\delta_{\beta}\geq \sup_{y \in \Omega_x} \norm{y-x}$, $\theta_{\beta} \geq 1$, and $\varepsilon_{\beta} = 0$,  our sampling technique guarantees that  $\lim_{N_{\beta}\to +\infty}\tilde{\beta}^*_{x} \geq \betaesti_x$. Moreover, when we assume that $\phi_{\beta,x}$ is locally Lipschitz over $\Omega_x$, the results in \cite{marion2023finite} ensure that for any $\varepsilon > 0$, choosing $N_{\beta} = \ca{O}(1/\varepsilon^{2})$ guarantees that $\tilde{\beta}^* \geq \betaesti_x - \varepsilon$ with probability at least $\frac{3}{4}$.

	\end{rmk}

	\begin{rmk}
		As demonstrated in Definition \ref{Cond_beta_esti},  the penalty parameter $\betaesti_x$ is defined in the neighborhood $\Omegax{x}$ for any given reference point $x \in \M$.  Therefore, the theoretical threshold value $\tilde{\beta}_{\tilde{x}}$ is guaranteed to be effective only in a neighborhood of the reference point $\tilde{x}$. For those cases where $\M$ is a compact manifold, we can choose $\delta_{\beta}$ to be sufficiently large so that $\M \subset \ca{B}(\tilde{x}, \delta_{\beta})$. Therefore, with  $\beta > \tilde{\beta}^*_{\tilde{x}}$, we can conclude that there exists a neighborhood of $\M$, where CDF and OCP have the same first-order and second-order stationary points. Furthermore, in a wide range of popular optimization methods,  the function value sequence of the generated iterates is non-increasing. That is, when we further assume the coercivity of the objective function $f$, there exists $M_{x} > 0$ such that the sequence $\{\xk\}$ is restricted within $\ca{B}(0, {M_x})$. Under such settings, we can choose $\delta_{\beta}$ to enforce the equivalence between CDF and OCP in a neighborhood of $\M \cap \ca{B}(0, {M_x})$. 
	\end{rmk}

	\subsection{Relationships between \ref{Prob_Ori} and \ref{Prob_Pen}}
	This subsection investigates the relationships between \ref{Prob_Ori} and \ref{Prob_Pen} on their stationary points and local minimizers. We start by presenting the explicit expression for the gradient and Hessian of \ref{Prob_Pen}. 
	
	\begin{prop}
		\label{Prop_gradient_h}
		The gradient of $h$ in \ref{Prob_Pen} can be expressed as 
		\begin{equation}
			\nabla h(x) = \Ja(x)\nabla f(\A(x)) + \beta \Jc(x) c(x). 
		\end{equation}
		Furthermore, under Assumption \ref{Assumption_4}, the Hessian of $g(x) := f(\A(x))$ can be expressed as
		\begin{equation}
			\label{Eq_Prop_gradient_h_0}
			\nabla^2 g(x) = \Ja(x)\nabla^2 f(\A(x)) \Ja(x)\tp + \DJa(x)[\nabla f(\A(x))].
		\end{equation}
		and the Hessian of $h(x)$ is
		\begin{equation}
			\nabla^2 h(x) = \nabla^2 g(x) + \beta \left(\Jc(x)\Jc(x)\tp + \DJc(x)[c(x)]\right). 
		\end{equation}
	\end{prop}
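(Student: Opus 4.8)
The plan is to obtain both formulas by a direct chain-rule computation, writing $h(x) = g(x) + \frac{\beta}{2}\norm{c(x)}^2$ with $g(x) := f(\A(x))$ and differentiating the two summands separately. Throughout one must keep in mind the conventions fixed in Section 2, namely that $\Ja(x)$ and $\Jc(x)$ are the \emph{transposed} Jacobians, so that the $j$-th column of $\Ja(x)$ is $\nabla \A_j(x)$ and the $i$-th column of $\Jc(x)$ is $\nabla c_i(x)$. For the gradient, I would first differentiate $g = f \circ \A$: the scalar chain rule gives $\partial g(x)/\partial x_i = \sum_j (\partial \A_j(x)/\partial x_i)(\nabla f(\A(x)))_j$, and by the column description of $\Ja(x)$ this is exactly the $i$-th entry of $\Ja(x)\nabla f(\A(x))$, so $\nabla g(x) = \Ja(x)\nabla f(\A(x))$. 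For the penalty term, $\nabla\!\left(\frac{\beta}{2}\norm{c(x)}^2\right) = \beta \sum_i c_i(x)\nabla c_i(x) = \beta\,\Jc(x)c(x)$, again reading off the columns of $\Jc(x)$. Adding the two pieces yields the stated expression for $\nabla h$.

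For the Hessians, under Assumption~\ref{Assumption_4} the three maps are twice differentiable, so I may differentiate $\nabla g(x) = \Ja(x)\nabla f(\A(x))$ once more, viewing the right-hand side as a product of the matrix field $x \mapsto \Ja(x)$ and the vector field $x \mapsto \nabla f(\A(x))$. The product rule splits $\nabla^2 g(x)$ into two contributions: differentiating $\nabla f(\A(x))$ by the chain rule brings out an inner Jacobian of $\A$ and produces $\Ja(x)\nabla^2 f(\A(x))\Ja(x)\tp$ (the transpose and the placement of the two $\Ja$ factors are forced by the index pattern), while differentiating the factor $\Ja(x)$ with $\nabla f(\A(x))$ held fixed produces the second-order term in $\A$ contracted against $\nabla f(\A(x))$, which is $\DJa(x)[\nabla f(\A(x))]$; equality of the mixed partials $\partial^2 \A_j / \partial x_i \partial x_k$ shows this term is symmetric, as it must be for a Hessian. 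This gives \eqref{Eq_Prop_gradient_h_0}. Differentiating $\beta\,\Jc(x)c(x)$ in exactly the same fashion gives $\beta\Jc(x)\Jc(x)\tp$ from differentiating the factor $c(x)$ and $\beta\,\DJc(x)[c(x)]$ from differentiating the factor $\Jc(x)$, and adding this to $\nabla^2 g(x)$ produces the claimed formula for $\nabla^2 h$.

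The whole argument is routine; the only points requiring care are the consistent use of the transposed-Jacobian conventions and the interpretation of $\DJa(x)[\cdot]$ and $\DJc(x)[\cdot]$ as precisely the symmetric bilinear contractions that emerge from the product rule (rather than the differentiation-direction contraction suggested by the limit definition). I therefore do not expect any genuine obstacle beyond this index bookkeeping.
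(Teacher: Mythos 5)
Your computation is correct and is exactly the routine chain-rule/product-rule argument the paper has in mind; the paper omits the proof as ``straightforward from the expression of $h$'', and your derivation fills it in faithfully. Your closing caveat about reading $\DJa(x)[\cdot]$ and $\DJc(x)[\cdot]$ in the Hessian formulas as contractions against the output index of the second-order tensor (i.e.\ $\sum_j (\nabla f(\A(x)))_j \nabla^2 \A_j(x)$ and $\sum_j c_j(x)\nabla^2 c_j(x)$), rather than the directional-derivative contraction literally given by the limit definition in Section~2, is the right --- indeed the only dimensionally consistent --- interpretation, since $\DJc(x)[c(x)]$ must be an $n\times n$ matrix there while the limit definition would produce an $n\times p$ one.
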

	The proof for Proposition \ref{Prop_gradient_h} is straightforward from the expression of $h$ in \ref{Prob_Pen}, hence we omit it for simplicity. 
	
	\begin{prop}	
		\label{Prop_Firstorder_Equivalence_feasible}
		For any $x \in \M$, $x$ is a first-order stationary point of \ref{Prob_Ori} if and only if $x$ is a first-order stationary point of \ref{Prob_Pen}.
	\end{prop}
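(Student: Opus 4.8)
The plan is to reduce both stationarity conditions, for $x\in\M$, to the single algebraic condition $\nabla f(x)\in\mathrm{Range}(\Jc(x))$, and then read off the equivalence from Lemma~\ref{Le_Ja_nullspace}. First I would use that $x\in\M$ forces $c(x)=0$ and, by Assumption~\ref{Assumption_2}, $\A(x)=x$. Substituting these into the gradient formula of Proposition~\ref{Prop_gradient_h} collapses it to $\nabla h(x)=\Ja(x)\nabla f(\A(x))+\beta\Jc(x)c(x)=\Ja(x)\nabla f(x)$. Hence $x$ is a first-order stationary point of \ref{Prob_Pen} if and only if $\Ja(x)\nabla f(x)=0$.

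Next I would apply Lemma~\ref{Le_Ja_nullspace} with $d=\nabla f(x)$: the equality $\Ja(x)d=0$ holds exactly when $d\in\Nx=\mathrm{Range}(\Jc(x))$. Therefore $x$ is a first-order stationary point of \ref{Prob_Pen} if and only if $\nabla f(x)\in\mathrm{Range}(\Jc(x))$.

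It then remains to identify this with first-order stationarity of \ref{Prob_Ori}. Since $\nabla c_i(x)$ is the $i$-th column of $\Jc(x)$, the system \eqref{Eq_Defin_FOSP} in Definition~\ref{Defin_FOSP} says precisely that $\nabla f(x)=\Jc(x)\tilde{\lambda}$ for some $\tilde{\lambda}\in\bb{R}^p$ and $c(x)=0$; for $x\in\M$ the latter equation is automatic, so $x$ is a first-order stationary point of \ref{Prob_Ori} if and only if $\nabla f(x)\in\mathrm{Range}(\Jc(x))$. (Equivalently, one may phrase this through \eqref{add:4}: $\grad f(x)=\nabla f(x)-\Jc(x)\lambda(x)=0$ iff $\nabla f(x)\in\mathrm{Range}(\Jc(x))$, because $\lambda(x)=\Jc(x)^\dagger\nabla f(x)$ solves the associated least-squares problem and the residual lies in the orthogonal complement of $\mathrm{Range}(\Jc(x))$.) Chaining the three equivalences gives the claim.

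There is no genuine obstacle here; the proof is a short computation plus an appeal to Lemma~\ref{Le_Ja_nullspace}. The only point deserving a line of care is the translation between the Lagrange-multiplier formulation of Definition~\ref{Defin_FOSP} and the range/null-space language of Lemma~\ref{Le_Ja_nullspace}, which is immediate once one observes that $\sum_{i=1}^p\tilde{\lambda}_i\nabla c_i(x)=\Jc(x)\tilde{\lambda}$ and that feasibility $c(x)=0$ is built into the assumption $x\in\M$. Note also that the statement holds for every $\beta$, since the $\beta\Jc(x)c(x)$ term vanishes on $\M$.
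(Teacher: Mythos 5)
Your proposal is correct and follows essentially the same route as the paper: evaluate $\nabla h(x)=\Ja(x)\nabla f(x)$ on $\M$ and identify $\Ja(x)\nabla f(x)=0$ with $\nabla f(x)\in\Nx=\mathrm{Range}(\Jc(x))$ via Lemma~\ref{Le_Ja_nullspace}. The only cosmetic difference is that the paper handles the forward direction by writing $\nabla f(x)=\Jc(x)\lambda(x)$ and invoking $\Ja(x)\Jc(x)=0$ from Assumption~\ref{Assumption_2} directly, whereas you use the two directions of Lemma~\ref{Le_Ja_nullspace} symmetrically; both are equivalent.
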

	\begin{proof}
		For any first-order stationary point, $x \in \M$ of \ref{Prob_Ori}, it follows from the equality
		\eqref{add:2} that
		\begin{equation*}
			\nabla h(x) = \Ja(x) \nabla f(x) + \beta \Jc(x) c(x) = \Ja(x)\Jc(x) \lambda(x) = 0,
		\end{equation*}
		which implies that $x$ is a first-order stationary point of \ref{Prob_Pen}. 
		
		On the other hand, for any $x \in\M$ that is a first-order stationary point of \ref{Prob_Pen}, it holds that
		\begin{equation*}
			\Ja(x) \nabla f(x) = 0,
		\end{equation*}
		which results in the inclusion $\nabla f(x) \in \Nx =  \mathrm{range}(\Jc(x))$ from Lemma \ref{Le_Ja_nullspace}. By Definition \ref{Defin_FOSP}, we conclude that $x$ is a first-order stationary point of \ref{Prob_Ori}. 
	\end{proof}

	\begin{theo}
		
		\label{The_firstorder_equivalence}
		For any given $x \in \M$, suppose $\beta > \betaesti_x$, then any first-order stationary point of \ref{Prob_Pen} in $\Omegax{x}$ is a first-order stationary point of \ref{Prob_Ori}. 
	\end{theo}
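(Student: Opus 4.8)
The plan is to reduce the statement to a feasibility claim. By Proposition \ref{Prop_Firstorder_Equivalence_feasible}, once we know that a first-order stationary point $y\in\Omegax{x}$ of \ref{Prob_Pen} actually lies on $\M$, it is automatically a first-order stationary point of \ref{Prob_Ori}; so the whole task is to show that $\nabla h(y)=0$ together with $y\in\Omegax{x}$ forces $c(y)=0$.

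To prove $c(y)=0$ I would argue by contradiction, assuming $c(y)\neq 0$. Using the gradient formula of Proposition \ref{Prop_gradient_h}, the condition $\nabla h(y)=0$ reads $\Ja(y)\nabla f(\A(y))=-\beta\,\Jc(y)c(y)$. Pairing both sides with $\A(y)-y$ gives
\[
(\A(y)-y)\tp\Ja(y)\nabla f(\A(y)) = \beta\,(y-\A(y))\tp\Jc(y)c(y).
\]
The crucial point is that the coefficient $(y-\A(y))\tp\Jc(y)c(y)$ is strictly positive when $c(y)\neq 0$: this is exactly inequality \eqref{Eq_Prop_upper_bound_beta_1}, established inside the proof of Proposition \ref{Prop_upper_bound_beta}, which gives $(y-\A(y))\tp\Jc(y)c(y)\geq \frac{\Lsc}{4\Mc}\norm{c(y)}^2>0$ (here Assumption \ref{Assumption_1} guarantees $\Lsc>0$, and $\Mc<\infty$ because $\Omegax{x}\subset\Theta_x$). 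Dividing, we obtain
\[
\beta = \frac{(\A(y)-y)\tp\Ja(y)\nabla f(\A(y))}{(y-\A(y))\tp\Jc(y)c(y)},
\]
and the right-hand side is precisely the second of the three quantities whose maximum defines $\betaesti_x$, evaluated at the point $y\in\Omegax{x}$ in Definition \ref{Cond_beta_esti}; hence $\beta\leq\betaesti_x$, contradicting the hypothesis $\beta>\betaesti_x$. Therefore $c(y)=0$, so $y\in\M$, and then Proposition \ref{Prop_Firstorder_Equivalence_feasible} shows that $y$ is a first-order stationary point of \ref{Prob_Ori}.

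The step I expect to require the most care is the well-definedness of the displayed ratio, i.e.\ that its denominator does not vanish; this does not follow from Definition \ref{Cond_beta_esti} alone but is precisely the auxiliary bound \eqref{Eq_Prop_upper_bound_beta_1} already proved within Proposition \ref{Prop_upper_bound_beta}, so no additional estimation is required. Everything else reduces to a single inner-product manipulation, and the first term of $\betaesti_x$ (the one built from $f(\A^2(y))-f(\A(y))$) plays no role here — it will enter only in the later second-order analysis.
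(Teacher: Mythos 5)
Your proposal is correct and follows essentially the same route as the paper's proof: both pair $\nabla h(y)=0$ with $y-\A(y)$ via Proposition \ref{Prop_gradient_h}, invoke the lower bound \eqref{Eq_Prop_upper_bound_beta_1} from Proposition \ref{Prop_upper_bound_beta}, compare against the second term in Definition \ref{Cond_beta_esti}, and conclude via Proposition \ref{Prop_Firstorder_Equivalence_feasible}. The only cosmetic difference is that you phrase it as a contradiction with a division, whereas the paper keeps the denominator multiplied through in a direct inequality chain ending in $0\geq\frac{\Lsc(\beta-\betaesti_x)}{4\Mc}\norm{c(y)}^2$.
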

	\begin{proof}
		For any $y \in \Omegax{x}$ that is a first-order stationary point of \ref{Prob_Pen}, it holds from Proposition \ref{Prop_gradient_h} that 
		\begin{equation}
			0 = (y - \A(y))\tp \nabla h(y) =  (y - \A(y))\tp \Ja(y) \nabla f(\A(y)) + \beta (y - \A(y))\tp \Jc(y) c(y). 
		\end{equation}

		Moreover, from Definition \ref{Cond_beta_esti} , it holds that
		\begin{equation}
			(y - \A(y))\tp \Ja(y) \nabla f(\A(y)) \geq -\betaesti_x (y - \A(y))\tp \Jc(y) c(y).
		\end{equation}
		Note that we have used the fact that 
		$(y - \A(y))\tp \Jc(y)c(y) \geq 0$ from the proof of Proposition \ref{Prop_upper_bound_beta}.
		Therefore, combined with \eqref{Eq_Prop_upper_bound_beta_1}, we obtain 
		\begin{equation}
			\begin{aligned}
				0 ={}&   (y - \A(y))\tp \Ja(y) \nabla f(\A(y)) + \beta (y - \A(y))\tp \Jc(y) c(y)\\
				\geq{}& -\betaesti_x (y - \A(y))\tp \Jc(y) c(y) +  \beta (y - \A(y))\tp \Jc(y) c(y)\\
				\geq {}&   \frac{\Lsc(\beta - \betaesti_x)}{4\Mc} \norm{c(y)}^2,
			\end{aligned}
		\end{equation}
		which illustrates that $\norm{c(y)} = 0$ and thus $y \in \M$. Together with Proposition \ref{Prop_Firstorder_Equivalence_feasible}, we conclude that $y$ is a first-order stationary point of \ref{Prob_Ori}, and the proof is completed. 
		
	\end{proof}

	\begin{theo}
		
		\label{The_secondorder_equivalence}
		Suppose Assumption \ref{Assumption_4} holds. Then for any given $x \in \M$ with $\beta \geq \betaesti_x$ in \ref{Prob_Pen}, \ref{Prob_Ori} and \ref{Prob_Pen} have the same second-order stationary point over $\Omega_x$.

	\end{theo}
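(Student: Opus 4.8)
The plan is to reduce the claimed equivalence to a Hessian computation at feasible first-order stationary points, to exhibit the block structure of $\nabla^2 h$ there, and to finish with a (generalized) Schur-complement argument in which the second-order terms of $f$ cancel. \emph{Reduction.} A second-order stationary point of \ref{Prob_Ori} is feasible by definition and, by Proposition~\ref{Prop_Firstorder_Equivalence_feasible}, a first-order stationary point of \ref{Prob_Pen}; conversely, a second-order stationary point $y\in\Omega_x$ of \ref{Prob_Pen} is in particular a first-order stationary point of \ref{Prob_Pen}, so by Theorem~\ref{The_firstorder_equivalence} it lies in $\M$ and is a first-order stationary point of \ref{Prob_Ori}. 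Hence it suffices to show that, for every $y\in\M\cap\Omega_x$ that is a first-order stationary point of \ref{Prob_Ori}, $\nabla^2 h(y)\succeq 0$ if and only if $\ca{H}(y)\succeq 0$; the theorem then follows from Proposition~\ref{Prop_SOSP_Rie}. Fix such a $y$. Since $y\in\M$ we have $c(y)=0$, $\A(y)=y$, and $\nabla f(y)=\Jc(y)\lambda(y)\in\ca{N}_y$, so Proposition~\ref{Prop_gradient_h} simplifies to
\begin{equation*}
\nabla^2 h(y)=\Ja(y)\nabla^2 f(y)\Ja(y)\tp+\DJa(y)[\nabla f(y)]+\beta\,\Jc(y)\Jc(y)\tp .
\end{equation*}

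\emph{Key identity and block structure.} The identity I would establish first comes from differentiating $\Ja(x)\Jc(x)=0$ (valid on $\M$ by Assumption~\ref{Assumption_2}) along a smooth curve in $\M$ through $y$ with velocity $t\in\ca{T}_y$: this gives $(\DJa(y)[t])\Jc(y)=-\Ja(y)\DJc(y)[t]$, and right-multiplying by $\lambda(y)$ together with $\nabla f(y)=\Jc(y)\lambda(y)$ yields
\begin{equation*}
(\DJa(y)[t])\,\nabla f(y)=-\sum_{a=1}^{p}\lambda_a(y)\,\Ja(y)\nabla^2 c_a(y)\,t,\qquad t\in\ca{T}_y .
\end{equation*}
Combining this with $\Ja(y)\tp u=u$ for $u\in\ca{T}_y$ and $\Ja(y)\tp v\in\ca{T}_y$ for all $v$ (Lemma~\ref{Le_Ja_identical}), $\Jc(y)\tp u=0$ for $u\in\ca{T}_y$ (Proposition~\ref{Prop_FOSP_Rie}), and the symmetry of the second derivatives of $\A$, a direct calculation shows that, in the orthogonal decomposition $\bb{R}^n=\ca{T}_y\oplus\ca{N}_y$,
\begin{equation*}
\nabla^2 h(y)=\begin{pmatrix}\hess f(y) & \hess f(y)\,L\\ L\tp\hess f(y) & D_\beta\end{pmatrix},\qquad L:=\Ja(y)\tp|_{\ca{N}_y}:\ca{N}_y\to\ca{T}_y,
\end{equation*}
where $D_\beta$ denotes the $\ca{N}_y\times\ca{N}_y$ block, which equals $D_0+\beta\,\Jc(y)\Jc(y)\tp$ with $D_0$ the restriction of $\Ja(y)\nabla^2 f(y)\Ja(y)\tp+\DJa(y)[\nabla f(y)]$ to $\ca{N}_y$. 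The two decisive facts are that the tangent diagonal block is exactly $\hess f(y)$, and that the off-diagonal block is $\hess f(y)L$, so $\Rspace(\hess f(y)L)\subseteq\Rspace(\hess f(y))$ irrespective of any rank deficiency of $\hess f(y)$.

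\emph{The two implications.} If $\nabla^2 h(y)\succeq 0$, restricting to $\ca{T}_y$ gives $\hess f(y)\succeq 0$, i.e. $\ca{H}(y)\succeq 0$ by Proposition~\ref{Prop_SOSP_Rie}. For the converse, assume $\ca{H}(y)\succeq 0$, hence $\hess f(y)\succeq 0$. Because $\Rspace(\hess f(y)L)\subseteq\Rspace(\hess f(y))$, the generalized Schur-complement criterion shows that $\nabla^2 h(y)\succeq 0$ is equivalent to $\hess f(y)\succeq 0$ together with $D_\beta-L\tp\hess f(y)L\succeq 0$ on $\ca{N}_y$, so it remains to establish the second condition. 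Here the key cancellation takes place: for $v\in\ca{N}_y$, writing $q:=\Ja(y)\tp v\in\ca{T}_y$, the $\nabla^2 f(y)$-contribution to $v\tp D_0 v$ is $q\tp\nabla^2 f(y)q$, which is exactly the $\nabla^2 f(y)$-contribution to $v\tp L\tp\hess f(y)Lv=q\tp\big(\nabla^2 f(y)-\sum_a\lambda_a(y)\nabla^2 c_a(y)\big)q$, and these cancel in the difference; what remains is built only from $\nabla f(y)$, $\nabla^2 c_a(y)$, $\DJa(y)$ and $\Ja(y)$, hence is bounded below by $-C\norm{v}^2$ for a $\beta$-independent constant $C$ obtained from the $\Theta_x$-bounds (finite under Assumption~\ref{Assumption_4}), while $\beta\,v\tp\Jc(y)\Jc(y)\tp v=\beta\norm{\Jc(y)\tp v}^2\ge\tfrac14\beta\Lsc^2\norm{v}^2$ by construction of $\rho_x$. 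Thus $D_\beta-L\tp\hess f(y)L\succeq(\tfrac14\beta\Lsc^2-C)I$ on $\ca{N}_y$, which is positive semidefinite once $\beta$ passes an explicit threshold; one then verifies, by the same estimates as in the proof of Proposition~\ref{Prop_upper_bound_beta} (the relevant bound coming from the second term in Definition~\ref{Cond_beta_esti}), that $\betaesti_x$ already exceeds this threshold.

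\emph{Main obstacle.} The hard part I anticipate is this last cancellation: a crude bound on the normal block would force $\beta$ to dominate $\norm{\nabla^2 f(y)}$, whereas $\betaesti_x$ scales only with first-order data of $f$, so it is only after forming the Schur complement that the second-order terms of $f$ disappear and $\beta\ge\betaesti_x$ genuinely suffices. Establishing the block form itself — in particular that the off-diagonal block is \emph{exactly} $\hess f(y)\Ja(y)\tp$ and the tangent block is \emph{exactly} $\hess f(y)$ — rests on differentiating $\Ja\Jc=0$ along $\M$ and on carefully tracking the second-order term $\DJa(y)[\nabla f(y)]$ with $\nabla f(y)\in\ca{N}_y$, and is also what makes the argument go through in the rank-deficient RCRCQ setting where $\Jc$ may be singular on $\M$.
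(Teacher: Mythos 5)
Your reduction and your forward implication (a second-order stationary point of \ref{Prob_Pen} in $\Omega_x$ is one of \ref{Prob_Ori}) coincide with the paper's argument: restrict $\nabla^2 h(y)$ to $\ca{T}_y$, use $\Ja(y)\tp d = d$ and $\grad f(y)=0$ to identify the tangent block with $\hess f(y)$. Your block computation is also essentially correct: differentiating $\Ja\Jc=0$ along $\M$ and using the symmetry of the third-derivative tensor of $\A$ does give that the tangent--tangent block is exactly $\hess f(y)$ and the tangent--normal block is $\hess f(y)\Ja(y)\tp|_{\Nx[y]}$, so the generalized Schur-complement reduction and the cancellation of the $\nabla^2 f$ terms in $D_\beta - L\tp \hess f(y) L$ are sound.

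The genuine gap is the very last step. After the cancellation you are left needing
\begin{equation*}
\beta\,\norm{\Jc(y)\tp v}^2 \;\geq\; -\,v\tp \DJa(y)[\nabla f(y)]\,v \;-\; (\Ja(y)\tp v)\tp\Bigl(\textstyle\sum_a \lambda_a(y)\nabla^2 c_a(y)\Bigr)(\Ja(y)\tp v)
\end{equation*}
for all $v\in\Nx[y]$, i.e.\ $\beta \geq 4C/\Lsc^2$ for a constant $C$ built from $\DJa(y)$, $\nabla f(y)$ and $\nabla^2 c_a(y)$. You assert that $\betaesti_x$ "already exceeds this threshold" by "the same estimates as in the proof of Proposition \ref{Prop_upper_bound_beta}", but that proposition gives an \emph{upper} bound on $\betaesti_x$, whereas you need a \emph{lower} bound $\betaesti_x \geq 4C/\Lsc^2$. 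No such lower bound is available: $\betaesti_x$ is a supremum of quantities maxed with $0$, so in general the only guaranteed lower bound is $\betaesti_x\geq 0$ (and $\betaesti_x=0$ is attainable while $C$ need not vanish). The fact that the upper bound in Proposition \ref{Prop_upper_bound_beta} has the same order of magnitude as $C/\Lsc^2$ does not close this. The paper avoids the normal-block estimate entirely: for the converse it argues by contradiction with function values, using only the consequence $h(\A(y))\leq h(y)$ of $\beta\geq\betaesti_x$ (the first ratio in Definition \ref{Cond_beta_esti}). If $y$ were not second-order stationary for \ref{Prob_Pen}, there would be $y_i\to y$ with $h(y_i)<h(y)-\tau\norm{y_i-y}^2$; iterating $\A$ and passing to $\A^{\infty}(y_i)\in\M$ (Lemma \ref{Le_dist_Ainfty}, with $\norm{\A^\infty(y_i)-y}=O(\norm{y_i-y})$) produces feasible points contradicting second-order stationarity of $y$ on $\M$. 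To salvage your route you would have to prove the missing lower bound on the Schur complement directly, which is precisely the hard quantitative statement your Schur-complement framework was supposed to deliver; as written, the proof is incomplete at its decisive step.
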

	\begin{proof}
		For the first part of the proof, we aim to show that under Assumption \ref{Assumption_4}, any second-order stationary point of \ref{Prob_Pen}  is a second-order stationary point of
		\ref{Prob_Ori}. For any $y \in \Omega_x$ that is a second-order stationary point of \ref{Prob_Pen}, $y$ is also a first-order stationary point of \ref{Prob_Pen}.
		Then based on Theorem \ref{The_firstorder_equivalence}, $y$ is a first-order stationary point
		of \ref{Prob_Ori}, and 
		it holds that $y \in \M$. Then from Lemma \ref{Le_Ja_identical}, $\Ja(y) \tp d = d$ holds for any $d \in \ca{T}_y$,  which implies the equality
		\begin{equation*}
			d\tp \nabla^2 h(y) d  = d\tp \left( \nabla^2 f(y) - \sum_{i=1}^p \lambda_i(y) \nabla^2 c_i(y) \right) d + d\tp \left(\DJa(y)[\grad f(y)]\right) d.
		\end{equation*}
		Since $y\in \M$ is a first-order stationary point of \ref{Prob_Ori}, we have
		that $\grad f(y) =0.$ Hence
		together with Proposition \ref{Prop_FOSP_Rie}, we obtain the follow inequality,
		\begin{equation*}
			\begin{aligned}
				&\lambda_{\min}(\hess f(y)) \geq \min_{d \in  \ca{T}_y, \norm{d} = 1} ~ d\tp \nabla^2 h(y) d  \geq  \lambda_{\min}(\nabla^2 g(y))\\
				={}& \lambda_{\min}(\nabla^2 h(y))  \geq 0. 
			\end{aligned}
		\end{equation*} 
		Therefore, we can conclude that  $y$ is a second-order stationary point of \ref{Prob_Ori}. This completes the first part of the proof.

		We prove the second part of this theorem by contradiction. Suppose $x$ is a second-order stationary point of \ref{Prob_Ori} but is not a second-order stationary point for \ref{Prob_Pen}. Then there exists a constant $\tau > 0$ and a sequence $\{y_i\} \subset \BOmegax{x}$ such that $\{y_i\} \to x$ and $h(y_i) < h(x) - \tau \norm{y_i - x}^2$ holds for any $i \geq 0$.

		From the inequality 
		\begin{equation*}
			\beta \geq 2\cdot \sup_{y \in \Omegax{x}}\left\{  \max\left\{\frac{2(f(\A^2(y)) - f(\A(y)))}{\norm{c(y)}^2 - \norm{c(\A(y))}^2 }, 0  \right\} \right\},
		\end{equation*}
		it is easy to verify that the following inequality holds for any $y \in \Omegax{x}$, 
		\begin{equation}
			\label{Eq_Prop_esti_beta_0}
			h(\A(y)) = f(\A^2(y)) + \frac{\beta }{2} \norm{c(\A(y))}^2 \leq f(\A(y)) + \frac{\beta}{2}\norm{c(y)}^2  =h(y). 
		\end{equation}
		From \eqref{Eq_Prop_esti_beta_0}, for  any $i\geq 0$, it holds that 
		\begin{equation}
			\label{Eq_Prop_esti_beta_1}
			f(\A^{\infty}(y_i)) = h(\A^{\infty}(y_i)) \leq h(\A(y_i)) \leq h(y_i) < h(x) - \tau\norm{y_i - x}^2. 
		\end{equation}
		
		Furthermore,   notice that 
		\begin{equation}
			\norm{\A^{\infty}(y_i) - y_i} \leq \frac{4(\Ma +1)}{\Lsc } \norm{c(y_i)}, 
		\end{equation}
		then  it holds that 
		\begin{equation}
			\label{Eq_Prop_esti_beta_2}
			\begin{aligned}
				&\norm{\A^{\infty}(y_i) - x} \leq \frac{4(\Ma +1)}{\Lsc } \norm{c(y_i)} + \norm{y_i - x}\\
				\leq{}& \frac{4(\Ma +1)\Lc}{\Lsc } \mathrm{dist}(y_i, \M) + \norm{y_i - x}\\
				\leq{}& \left(\frac{4(\Ma +1)\Lc}{\Lsc } + 1 \right) \norm{y_i - x}. 
			\end{aligned}
		\end{equation}
		
		Therefore,  from  \eqref{Eq_Prop_esti_beta_1}, it holds that $\{\A^{\infty}(y_i)\} \to x$ and  for any $i \geq 0$,
		\begin{equation*}
			f(\A^{\infty}(y_i)) \leq f(x) - \frac{\tau}{\left(\frac{4(\Ma +1)\Lc}{\Lsc } + 1 \right)^2} \norm{\A^{\infty}(y_i) - x}^2,
		\end{equation*}
		which contradicts the fact that $x$ is a second-order stationary point of \ref{Prob_Ori} as illustrated in \cite[Proposition 5.5.5]{Absil2009optimization}. Therefore, from the contradiction, we can conclude that $x$ is a second-order stationary point for \ref{Prob_Pen}. This completes the entire proof. 
		
		\
	\end{proof}

	\section{Software Description}
	In this section, we give a brief overview of the structure of \CDOpt. The idea behind the design for \CDOpt is to guarantee the ease-of-use features for various types of users, including the users from optimization communities for solving standard Riemannian optimization problems over commonly used manifolds, the users from machine learning communities who wish to train neural networks with manifold constraints, and developers who aim to develop new manifolds and introduce more advanced features for \CDOpt. As illustrated in Figure \ref{Fig:sketch_structure}, we organize the \CDOpt package into the following three parts: 
	\begin{itemize}
		\item The manifold classes;
		\item The problem classes;
		\item The neural layers.
	\end{itemize}
	These three parts are mutually separated, hence it is easy to add new manifolds, and new neural layers in \CDOpt with maximum code reuse for developers.  
	
	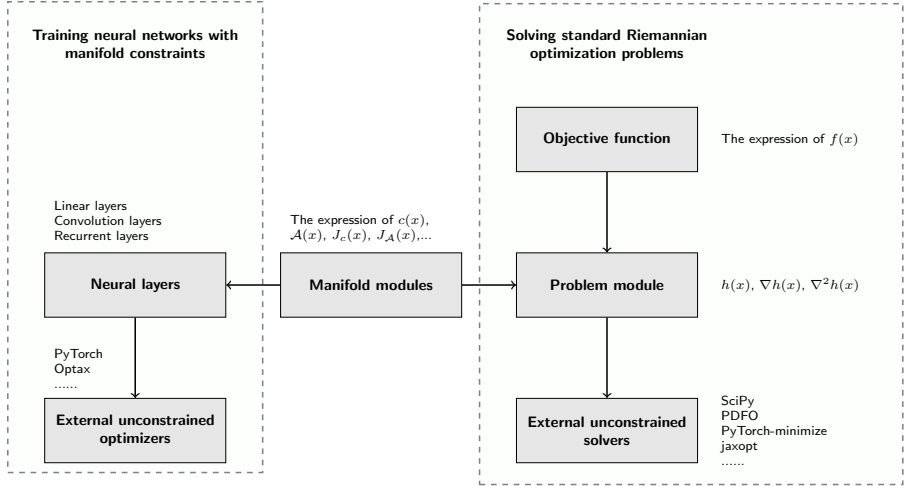
\begin{figure}[htbp]
		\centering
		\resizebox{\columnwidth}{!}{%
			\begin{tikzpicture}
				[node distance = 1cm, auto,font=\footnotesize,
				every node/.style={node distance=3cm},
				comment/.style={rectangle, inner sep= 5pt, text width=3cm, node distance=0.25cm, font=\scriptsize\sffamily},
				refpoint/.style={rectangle, inner sep= 0pt, text width=0cm, node distance=0cm, font=\scriptsize\sffamily},
				outer/.style={draw=gray,dashed,fill=green!1,thick,inner sep=5pt},
				textbox/.style={rectangle,inner sep=5pt, text width=4cm, text badly centered, minimum height=1.2cm, font=\bfseries\footnotesize\sffamily},
				force/.style={rectangle, draw, fill=black!10, inner sep=5pt, text width=3cm, text badly centered, minimum height=1.2cm, font=\bfseries\footnotesize\sffamily}] 
				
				\node [force] (rivalry) {Problem module};
				\node [force, above = 1.5cm of rivalry] (substitutes) {Objective function};
				\node [force, left=1cm of rivalry] (suppliers) {Manifold modules};
				
				\node [force, below = 1.5cm of rivalry] (entrants) {External unconstrained solvers};
				
				\node [force, left=1.0cm of suppliers] (state) {Neural layers};
				\node [force, below = 1.5cm of state] (optimizers) {External unconstrained optimizers};
				\node (text) [textbox, anchor=north] at ([yshift=4.5cm]state.north) {Training neural networks with manifold constraints};
				
				\node (textnew) [textbox, anchor=north] at ([yshift=1.75cm]substitutes.north) {Solving standard Riemannian optimization problems};

				
				
				\node [comment, above=0.0cm of suppliers] {The expression of $c(x)$,  $\A(x)$, $\Jc(x)$, $\Ja(x)$,...};
				
				\node [refpoint, below=0.75cm of suppliers](comment-manifold) {};
				
				\node [refpoint, below=0.75cm of state](point1) {};
				
				\node [comment, right=0.25cm of rivalry] (comment-rivalry) {$h(x)$, $\nabla h(x)$, $\nabla^2 h(x)$};


				\node [comment, right=0.25 of substitutes] { The expression of $f(x)$ };
				
				
				\node [comment, right=0.25 of entrants] (comment-optimizers) {\Pkg{SciPy}\\ \Pkg{PDFO}\\ \Pkg{PyTorch-minimize}\\ \Pkg{jaxopt}\\......};
				
				\node [comment, text width=3cm, above=0.0 of state] {Linear layers\\ Convolution layers \\Recurrent layers};
				
				\node [comment, text width=3cm, above=0.0 of optimizers] {\Pkg{PyTorch}\\ \Pkg{Optax}\\ ......};

				\begin{pgfonlayer}{background}
					\node[outer,fit=(state) (optimizers) (text)] (A) {};
				\end{pgfonlayer}

				\begin{pgfonlayer}{background}
					\node[outer,fit=(comment-optimizers)  (textnew)] (A2) {};
				\end{pgfonlayer}
				
				
				\path[->,thick] 
				(substitutes) edge (rivalry)
				(suppliers) edge (rivalry)
				(rivalry) edge (entrants)
				(suppliers) edge (state)
				(state) edge (optimizers);


			\end{tikzpicture} 
		}
		\caption{The sketch of the structure of \CDOpt. }
		\label{Fig:sketch_structure}
	\end{figure}
	
	\subsection{The manifold classes}
	The module ``\codeobj{manifold}'' in \CDOpt describes the manifold constraints by the following essential components for describing the corresponding constraint dissolving mapping,
	\begin{itemize}
		\item The expression of constraints $c(x)$, $\Jc(x)$ and the Hessian-vector product of $\norm{c(x)}^2$;
		\item The expression of $\A(x)$, $\Ja(x)$ and the second-order differential of $\A(x)$;
		\item The rules to compute  $h(x)$, $\nabla h(x)$, and $\nabla^2 f(x)$ from $f$, $\A$, $c$ and their derivatives.
	\end{itemize}	
	All the manifolds in \CDOpt are available as derived classes from the base class named \codeobj{basic\_manifold}, which is available in the \codeobj{manifold} module of \CDOpt. Note that all the essential components in these manifold classes can be expressed by $c(x)$, $\A(x)$ and their differentials. Therefore,  if some of these materials are not provided in the derived classes of the \codeobj{basic\_manifold}, all the essential materials can be automatically calculated based on the selected automatic differentiation (AD) packages from the supported numerical backends.
	In particular, if only the expression of $c(x)$ is provided in the derived class of \codeobj{basic\_manifold}, the constraint dissolving mapping is automatically chosen as 
	\begin{equation}
		\label{Eq_general_cd_mapping}
		\A_c(x) = x - \Jc(x) (\Jc(x)\tp \Jc(x) + \sigma \norm{c(x)}^2  I_p)^{\dagger} c(x).
	\end{equation}
	Here $\Jc(x) \in \bb{R}^{n\times p}$ denotes the transpose of the Jacobian of $c$ at $x$, which is also computed by the selected  AD packages if it is not provided. Moreover, it is easy to verify that $\A_c$ is locally Lipschitz over $\bb{R}^n$ when $\Jc$ is locally Lipschitz smooth over $\bb{R}^n$.  As a result, users only need to provide the expression of the manifold constraints $c(x)$ and its corresponding numerical backends for defining new manifold constraints though \codeobj{basic\_manifold} from the \codeobj{manifold} module in \CDOpt.

	\begin{lstlisting}[caption= Define customized Riemannian manifold in \CDOpt, label = Lst_define_manifold,captionpos=b]
		class customized_manifold(basic_manifold_np):
		def __init__(self, var_shape, T):
		m, s = *var_shape
		self.T = T
		self.Is = np.eye(s)
		super().__init__('customized_manifold',(m,s), (s,s))
		
		def C(self, X):
		return (X.T @ X)@ self.T  - self.Is
	\end{lstlisting}
	
	\CDOpt supports various numerical backends, where the type of the variables, APIs for build-in functions, and AD packages differ among different numerical backends. As a result, \CDOpt provides some derived classes from \codeobj{basic\_manifold}:  \codeobj{basic\_manifold\_np}, \codeobj{basic\_manifold\_torch} and \codeobj{basic\_manifold\_jax}. In these derived classes, only the expression of $c(x)$ is required in defining a manifold, and their numerical backends are fixed as \Pkg{NumPy} + \Pkg{Autograd}, \Pkg{PyTorch} and \Pkg{JAX}, respectively. Then various specific manifolds, with different numerical backends, are developed by inheriting \codeobj{basic\_manifold\_np}, \codeobj{basic\_manifold\_torch} or \codeobj{basic\_manifold\_jax}. Listing \ref{Lst_define_manifold} illustrates how to define the Riemannian manifold $\{ X \in \bb{R}^{m\times s}: X\tp X T = I_s \}$ in \CDOpt only by the expression of the constraints $X\tp X T - I_s = 0$.

	Furthermore, \CDOpt provides various predefined manifolds, where the essential components are prefixed manually to achieve the best computational efficiency. Table \ref{Table_cdopt_manifold} illustrates these predefined manifolds in \CDOpt. Moreover, the formulation of their corresponding constraint dissolving mappings in \CDOpt is listed in Table \ref{Table_implementation}. 
	
	\begin{rmk}
		In \CDOpt, each point on the Grassmann manifold is an $s$-dimensional subspace embedded in $\bb{R}^m$ and represented by an orthogonal $m \times s$ matrix. Consequently, optimization problems over the Grassmann manifold in \CDOpt are treated as a specific class of optimization problems over the Stiefel manifold. Since the constraint dissolving mapping $\A$ only depends on the constraints, the constraint dissolving mapping $\mathcal{A}$ for the Grassmann manifold in \CDOpt shares the same expression as that for the Stiefel manifold \cite{xiao2021solving,xiao2022constraint}. 
		
		Additionally, compared to optimization problems over the Stiefel manifold in \CDOpt, those over the Grassmann manifold require the objective functions to be orthogonally invariant. That is, for an objective function $f: \mathbb{R}^{m \times s} \to \mathbb{R}$, the equality $f(X) = f(XQ)$ holds for any $X \in \mathbb{R}^{m \times s}$ and any orthogonal matrix $Q \in \mathbb{R}^{s \times s}$. In contrast, the optimization problems over the Stiefel manifold admit more general functions. This leads to the different solution properties between these two classes of problems in \CDOpt. 
		
	\end{rmk}

	\begin{table}[htbp]
		\tiny
		\centering
		\caption{Predefined manifold in \CDOpt. Here ${\bf 0}_{m\times m}$ denotes the $m\times m$  zero matrix, and $X^H$ denotes the conjugate transpose of a complex matrix $X$.}
		\label{Table_cdopt_manifold}
		\scalebox{0.8}{
			\begin{tabular}{|lll|}
				\hline
				Name & $\M$ & Predefined manifolds\\
				\hline
				\multirow{3}{*}{Euclidean space} & \multirow{3}{*}{$\bb{R}^n$} & \codeobj{eudliean\_np}\\
				& & \codeobj{eudliean\_torch}\\
				& & \codeobj{eudliean\_jax}\\ \hline
				\multirow{3}{*}{Sphere} & \multirow{3}{*}{$\{ x \in \bb{R}^n: \norm{x} = 1 \}$} & \codeobj{sphere\_np}\\
				& & \codeobj{sphere\_torch}\\
				& & \codeobj{sphere\_jax}\\ \hline
				\multirow{3}{*}{Oblique manifold} & \multirow{3}{*}{$\{ X \in \bb{R}^{m\times s}: \mathrm{Diag}(XX\tp) = I_m \}$} & \codeobj{oblique\_np}\\
				& & \codeobj{oblique\_torch}\\
				& & \codeobj{oblique\_jax}\\ \hline
				\multirow{3}{*}{Stiefel manifold} & \multirow{3}{*}{ $\{ X \in \bb{R}^{m\times s}: X^TX = I_s \}$} & \codeobj{stiefel\_np}\\
				& & \codeobj{stiefel\_torch}\\
				& & \codeobj{stiefel\_jax}\\ \hline
				\multirow{3}{*}{Grassmann manifold} & \multirow{3}{*}{ $\{ \mathrm{range}(X): X \in \bb{R}^{m\times s}, X^TX = I_s \}$}  & \codeobj{grassmann\_np}\\
				& & \codeobj{grassmann\_torch}\\
				& & \codeobj{grassmann\_jax}\\ \hline
				\multirow{3}{*}{Generalized Stiefel manifold} & \multirow{3}{*}{ $\{ X \in \bb{R}^{m\times s}: X^T B X = I_s \}$, $B \succeq 0$ } & \codeobj{generalized\_stiefel\_np}\\
				& & \codeobj{generalized\_stiefel\_torch}\\
				& & \codeobj{generalized\_stiefel\_jax}\\ \hline
				\multirow{3}{*}{Hyperbolic manifold} & \multirow{3}{*}{ $\{ X \in \bb{R}^{m\times s}: X^T B X = I_s\}$, $B$ is indefinite } & \codeobj{hyperbolic\_np}\\
				& & \codeobj{hyperbolic\_torch}\\
				& & \codeobj{hyperbolic\_jax}\\ \hline
				\multirow{3}{*}{Symplectic Stiefel manifold} & \multirow{3}{*}{ $\left\{ X \in \bb{R}^{2m\times 2s}: X \tp Q_m X = Q_s \right\}$, $Q_m := \left[ \begin{smallmatrix}
						{\bf 0}_{m\times m} & I_m\\
						-I_m & {\bf 0}_{m\times m}
					\end{smallmatrix}\right]$ } & \codeobj{symp\_stiefel\_np}\\
				& & \codeobj{symp\_stiefel\_torch}\\
				& & \codeobj{symp\_stiefel\_jax}\\ \hline
				\multirow{3}{*}{Stiefel manifold with range constraint} & \multirow{3}{*}{ $\{ X \in \bb{R}^{m\times s}: X\tp X = I_s, ~ XX\tp e = e\}$, $\norm{e} = 1$.}  & \codeobj{stiefel\_range\_constraints\_np}\\
				& & \codeobj{stiefel\_range\_constraints\_torch}\\
				& & \codeobj{stiefel\_range\_constraints\_jax}\\ \hline
				\multirow{3}{*}{Complex sphere} & \multirow{3}{*}{ $\{ x \in \bb{C}^{n}: \norm{x} = 1 \}$} & \codeobj{complex\_sphere\_np}\\
				& & \codeobj{complex\_sphere\_torch}\\
				& & \codeobj{complex\_sphere\_jax}\\ \hline
				\multirow{3}{*}{Complex oblique manifold} & \multirow{3}{*}{ $\{ X \in \bb{C}^{m\times s}: \mathrm{Diag}(XX^H) = I_m\}$ } & \codeobj{complex\_oblique\_np}\\
				& & \codeobj{complex\_oblique\_torch}\\
				& & \codeobj{complex\_oblique\_jax}\\ \hline
				\multirow{3}{*}{Complex Stiefel manifold} & \multirow{3}{*}{ $\{ X \in \bb{C}^{m\times s}: X^H X = I_s\}$ } & \codeobj{complex\_stiefel\_np}\\
				& & \codeobj{complex\_stiefel\_torch}\\
				& & \codeobj{complex\_stiefel\_jax}\\ \hline
				Product manifold & $\M_1\times M_2\times \cdots$ &  \codeobj{product\_manifold} \\\hline
			\end{tabular}
		}
	\end{table}

	\begin{table}[htbp]
		
		\tiny
		\centering
		
		\caption{Default implementations of the constraint dissolving mappings $\A$ for the predefined Riemannian manifolds in \CDOpt. Here ${\bf 0}_{m\times m}$ denotes the $m\times m$ zero matrix, and $X^H$ denotes the conjugate transpose of a complex matrix $X$.} 
		
		\label{Table_implementation}
		\scalebox{0.7}{
			\begin{tabular}{|lll|}
				\hline
				Name of the manifold & Expression of $\M$ & Default choice of $\A$ in \CDOpt \\ \hline
				Euclidean space & $\{x: x \in \bb{R}^n\}$ & $x\mapsto x$ \\ \hline
				Sphere & $\left\{ x \in \bb{R}^{n}:  x\tp x = 1 \right\}$ &  $x \mapsto 2x/(1 + \norm{x}_2^2)$ \cite{xiao2022constraint} \\ \hline
				Oblique manifold & $\left\{ X \in \bb{R}^{m\times s}: \mathrm{Diag} (X X\tp) = I_m \right\}$ &  $X \mapsto 2\left( I_m + \mathrm{Diag}(X X\tp) \right)^{-1} X $ \cite{xiao2022constraint} \\ \hline
				Stiefel manifold &$\left\{ X \in \bb{R}^{m\times s}: X \tp X = I_s \right\}$& $X \mapsto X\left( \frac{3}{2}I_s - \frac{1}{2} X\tp X \right)$ \cite{xiao2021solving}\\ \hline
				Grassmann manifold &$\left\{ \mathrm{range}(X): X \in \bb{R}^{m\times s}, X \tp X = I_s \right\}$& $X \mapsto X\left( \frac{3}{2}I_s - \frac{1}{2} X\tp X \right)$ \\ \hline
				\multirow{2}{*}{Generalized Stiefel manifold } &  $\left\{ X \in \bb{R}^{m\times s}: X \tp B X = I_s \right\}$ for some  & \multirow{2}{*}{$X \mapsto X\left( \frac{3}{2}I_s - \frac{1}{2} X\tp B X \right)$  \cite{xiao2022constraint}  } \\
				&  positive definite $B$ & \\ \hline
				\multirow{2}{*}{Hyperbolic manifold \cite{bai2014minimization} } &  $\left\{ X \in \bb{R}^{m\times s}: X \tp B X = I_s \right\}$ for some $B$ & \multirow{2}{*}{$X \mapsto X\left( \frac{3}{2}I_s - \frac{1}{2} X\tp B X \right)$ \cite{xiao2022constraint}  } \\
				& that satisfies $\lambda_{\min}(B) < 0 < \lambda_{\max}(B)$ & \\ \hline
				\multirow{2}{*}{Symplectic Stiefel manifold \cite{son2021symplectic}} &  $\left\{ X \in \bb{R}^{2m\times 2s}: X \tp Q_m X = Q_s \right\}$ & \multirow{2}{*}{$X \mapsto X \left(\frac{3}{2}I_{2s} + \frac{1}{2}Q_sX\tp Q_mX \right)$ \cite{xiao2022constraint} 
				}\\  
				&$Q_m := \left[ \begin{smallmatrix}
					{\bf 0}_{m\times m} & I_m\\
					-I_m & {\bf 0}_{m\times m}
				\end{smallmatrix}\right]$  & \\ \hline
				\multirow{2}{*}{Stiefel manifold with range constraint \cite{huang2022riemannian}} &  $\{ X \in \bb{R}^{m\times s}: X\tp X = I_s, ~ XX\tp e = e\}$ & $X \mapsto X \left(\frac{3}{2}I_{s}  - \frac{1}{2} X\tp X + (X\tp X - 2I_s) X\tp ee\tp X  \right)$\\  
				& for some $e \in \bb{R}^m$ such that $\norm{e} = 1$. &   $\qquad \qquad + ee\tp X$ \\ \hline
				Complex sphere & $\left\{ x \in \bb{C}^{n}: \norm{x} = 1 \right\}$  & $x \mapsto 2x/(1 + \norm{x}_2^2)$\\ \hline
				Complex oblique manifold & $\left\{ X \in \bb{C}^{m\times s}: \mathrm{Diag}(XX^H) = I_m \right\}$  & $X \mapsto 2\left( I_m + \mathrm{Diag}(X X\tp) \right)^{-1} X $\\ \hline
				Complex Stiefel manifold & $\left\{ X \in \bb{C}^{m\times s}: X^H X = I_s \right\}$  & $X \mapsto X\left( \frac{3}{2}I_s - \frac{1}{2} X^H X \right)$ \cite{xiao2022constraint} \\ \hline
			\end{tabular}
		}
		
	\end{table}

	\subsection{Problem class} 
	The \codeobj{problem} class in \CDOpt describes the Riemannian optimization problem \ref{Prob_Ori} by the manifold class for the constraints $c(x)$, the objective function $f(x)$ and its derivatives.   As shown in Listing \ref{Lst_define_problem},  \CDOpt allows users to describe the optimization problem only from the expression of the objective function and the instantiated manifold.
	\begin{lstlisting}[caption= Describe Riemannian optimization problem in \CDOpt, label = Lst_define_problem,captionpos=b]
		manifold = stiefel_torch( (100,10) )
		prob = cdopt.core.problem(manifold, obj_fun, beta = 'auto')		
	\end{lstlisting}
	
	When instantiating the \codeobj{problem} class, the argument \codeobj{obj\_fun} should be a callable object, and the argument \codeobj{manifold} should be the instantiation of the \codeobj{manifold} class in \CDOpt. Other materials for optimization, including the gradient and Hessian of the objective function $f$, are optional in the instantiation of \codeobj{problem} class. If these materials are not provided, the \codeobj{problem} class automatically computes them by the chosen numerical backends behind the interface. 
	
	Furthermore, if the penalty parameter $\beta$ in \ref{Prob_Pen} is not provided in the instantiation of the \codeobj{problem} class, \CDOpt can automatically estimate an appropriate value of $\beta$. When the argument \codeobj{beta} is set as \codeobj{'auto'}, \CDOpt randomly chooses the penalty parameter by the scheme presented in  Remark \ref{Rmk_esti_beta}, where the reference point $\tilde{x} \in \M$ from the \codeobj{Init\_point} method of the \codeobj{manifold} class, and the default values for the hyperparameters are chosen as  $N_{\beta} = 100$, $\delta_{\beta} = 0.01$, $\varepsilon_{\beta} = 10^{-14}$ and $\theta_{\beta} = 2$.  Users can set their customized hyperparameters via the argument \codeobj{autobeta\_args}. 
	
	After instantiation, the \codeobj{problem} class provides the constraint dissolving function \ref{Prob_Pen} and its derivatives as callable objects. In the attributes of the \codeobj{problem} class, \codeobj{cdf\_obj}, \codeobj{cdf\_grad} and \codeobj{cdf\_hvp} are callable objects that return the function value, gradient and Hessian-vector product of the constraint dissolving function, respectively. Their input arguments and outputs are of the same type as the selected numerical backend of the problem. Moreover, to adapt the optimizers that are developed based on \Pkg{NumPy} and \Pkg{SciPy}, the \codeobj{problem} class provides the attributes \codeobj{cdf\_obj\_vec\_np}, \codeobj{cdf\_grad\_vec\_np} and \codeobj{cdf\_hvp\_vec\_np}, where their inputs and outputs are all \Pkg{NumPy} 1D array. Based on the provided attributes from the \codeobj{problem} class, users can directly apply the unconstrained optimization solvers from various existing packages, such as \Pkg{SciPy}, \Pkg{PDFO}, \Pkg{PyTorch-minimize} and \Pkg{jaxopt}.

	\subsection{Neural layers}
	\Pkg{PyTorch} and \Pkg{Flax} are powerful frameworks for training neural networks, where neural networks are built by neural layers from \codeobj{torch.nn} or \codeobj{flax.linen} modules. For those users that aim to train neural networks with manifold constraints, \CDOpt provides various predefined neural layers in its  \codeobj{cdopt.nn} and \codeobj{cdopt.linen} modules for \Pkg{PyTorch} and \Pkg{Flax}, respectively. These predefined layers in \CDOpt preserve the same APIs as the layers from \Pkg{PyTorch} and \Pkg{Flax}, hence users can plug these layers into the neural networks with minimal modification to the standard \Pkg{PyTorch} or \Pkg{Flax} codes.

	The neural layers from \codeobj{cdopt.nn} module are the derived classes of the \codeobj{module} class from \codeobj{torch.nn}, hence can be integrated with any neural network built from the neural layers from \codeobj{torch.nn}. Therefore, users can build neural networks by mixing the layers from both \codeobj{cdopt.nn} and \codeobj{torch.nn}. In the instantiation of the neural layers from \codeobj{cdopt.nn}, users can set the manifold constraints by the \codeobj{manifold\_class} argument and choose the penalty parameter $\beta$ by the \codeobj{penalty\_param} argument. When instantiated, the neural layers in the \codeobj{cdopt.nn} module first call its \codeobj{\_\_init\_\_} method to instantiate the manifold according to the \codeobj{manifold\_class} argument and generate the initial weights on the manifold. After instantiation, the neural layers provide the attributes \codeobj{manifold} and \codeobj{quad\_penalty} to access the instantiated manifolds and the value of $\norm{c(x)}^2$, respectively. Moreover, for any neural network that is built from the neural layers from \codeobj{cdopt.nn} and \Pkg{torch.nn}, we can call the function \codeobj{get\_quad\_penalty()} from \codeobj{cdopt.nn} to compute the sum of all the quadratic penalty terms of its neural layers from \codeobj{cdopt.nn}.

	Currently,  \CDOpt implements the following neural layers for \Pkg{PyTorch} in the \codeobj{cdopt.nn} module,
	\begin{itemize}
		\item \codeobj{Linear}, \codeobj{Bilinear}, \codeobj{LazyLinear};
		\item \codeobj{Conv1d}, \codeobj{Conv2d}, \codeobj{Conv3d};
		\item \codeobj{ConvTranspose1d}, \codeobj{ConvTranspose2d}, \codeobj{ConvTranspose3d};
		\item \codeobj{RNN}, \codeobj{LSTM}, \codeobj{GRU}, \codeobj{RNNCell}, \codeobj{LSTMCell}, \codeobj{GRUCell}.
	\end{itemize}
	Listing \ref{Lst1} presents a simple example, where we build a two-layer neural network by the layers from \Pkg{CDOpt.nn} and \Pkg{PyTorch.nn}. 
	
	\begin{lstlisting}[caption= Building a two-layer neural network with manifold constraints in PyTorch by predefined neural layers from \codeobj{cdopt.nn}., label = Lst1,captionpos=b]
		class Net(nn.Module):
		def __init__(self):
		super(Net, self).__init__()
		self.conv = Conv2d_cdopt(1, 6, 5, 
		manifold_class=stiefel_torch, penalty_param = 0.05)
		self.fc = nn.Linear(84, 10)
		
		def forward(self, x):
		x = F.max_pool2d(F.relu(self.conv(x)), (2, 2))
		x = self.fc(x)
		x = F.log_softmax(x, dim=1)
		return x
	\end{lstlisting}

	Furthermore, for those neural layers that are not available from \codeobj{cdopt.nn}, \CDOpt provides a simple way to add manifold constraints to the parameters of these neural layers. Through the function \codeobj{set\_constraint\_dissolving} from \codeobj{cdopt.nn.utils.set\_constraints}, users can set the manifold constraints to the neural layers by just providing the neural layers, the name of target parameters, and the manifold class. Listing \ref{Lst1_b} presents a simple example on how to set the manifold constraints for the weights in a two-layer neural network.
	
	\begin{lstlisting}[caption= Building a two-layer neural network with manifold constraints in PyTorch by the function \codeobj{set\_constraint\_dissolving} from \codeobj{cdopt.nn.utils.set\_constraints}., label = Lst1_b,captionpos=b]
		class Net(nn.Module):
		def __init__(self):
		super(Net, self).__init__()
		self.conv = nn.Conv2d(1, 6, 5)
		self.fc = nn.Linear(84, 10)
		set_constraint_dissolving(self.conv, 'weight',
		manifold_class=stiefel_torch, penalty_param = 0.05)
		
		def forward(self, x):
		x = F.max_pool2d(F.relu(self.conv(x)), (2, 2))
		x = self.fc(x)
		x = F.log_softmax(x, dim=1)
		return x
	\end{lstlisting}

	For \Pkg{Flax}, \CDOpt implements the following neural layers for training neural networks, 
	\begin{itemize}
		\item \codeobj{Dense}, \codeobj{DenseGeneral};
		\item \codeobj{Conv}, \codeobj{ConvTranspose}.
	\end{itemize}
	It is worth mentioning that the neural layers in \Pkg{Flax} are designed in a different way from those in \Pkg{PyTorch.nn}.  In \Pkg{Flax},  the neural layers are separated from their weights and behave as functions that map the inputs and weights to their outputs by the \codeobj{\_\_call\_\_} method. As a result, although the neural layers from \codeobj{linen} module can be called by the same arguments as those from \Pkg{Flax.linen},  the quadratic penalty term $\norm{c(x)}^2$ is returned by the \codeobj{\_\_call\_\_} method.  
	Listing \ref{Lst2}  illustrates how to build a neural network by the neural layers from \Pkg{CDOpt.linen} and \Pkg{Flax.linen}. 
	\begin{lstlisting}[caption= Building a two-layer neural network with manifold constraints in Flax by predefined neural layers from \codeobj{cdopt.linen}, label = Lst2,captionpos=b]
		class model(flax.linen.Module):
		@flax.linen.compact
		def __call__(self, x):
		x, quad_penalty = cdopt.linen.Conv_cdopt(features=32, kernel_size=(3, 3),
		manifold_class = stiefel_jax)(x)
		x = flax.linen.relu(x)
		x = flax.linen.avg_pool(x, window_shape=(2, 2), strides=(2, 2))
		x = flax.linen.Dense(features=10)(x)
		return x, quad_penalty
	\end{lstlisting}
	\subsection{Optimization through \CDOpt}
	In \CDOpt, the Riemannian optimization problem \ref{Prob_Ori} is transformed into the unconstrained minimization of the constraint dissolving function \ref{Prob_Pen}. Therefore, various existing unconstrained optimization solvers can be directly applied to solve \ref{Prob_Ori} through minimizing \ref{Prob_Pen}.  \CDOpt only requires the following steps for solving the optimization problem through the \codeobj{problem} class, 	
	\begin{enumerate}
		\item Define the objective function $f: \bb{R}^n \to \bb{R}$;
		\item Instantiation of the manifold $\M = \{ x \in \bb{R}^n: c(x) = 0 \}$;
		\item Describe the optimization problem through \codeobj{problem} class;
		\item Call an external solver to minimize the constraint dissolving function. 
	\end{enumerate}
	As far as we tested, \CDOpt supports the following solvers from \Pkg{SciPy}, \Pkg{PDFO} \cite{pdfo2022website}, \Pkg{PyTorch-minimize} \cite{torchmin2022github}, and \Pkg{jaxopt} \cite{jaxopt2021},
	\begin{itemize}
		\item \Pkg{SciPy}: \codeobj{CG}, \codeobj{BFGS}, \codeobj{Newton-CG},  \codeobj{L-BFGS-B}, \codeobj{TNC}, \codeobj{trust-ncg}, and \codeobj{trust-krylov};
		\item \Pkg{PyTorch-minimize}: \codeobj{CG}, \codeobj{L-BFGS}, \codeobj{trust-ncg}, and \codeobj{trust-krylov};
		\item \Pkg{jaxopt}: \codeobj{GradientDescent}, \codeobj{LBFGS}, \codeobj{NonlinearCG}, and its interface for the package \codeobj{scipy.optimize.minimize};
		\item \Pkg{PDFO}:  \codeobj{newuoa}, \codeobj{bobyqa}, \codeobj{lincoa}.
	\end{itemize}

	On the other hand,   \CDOpt provides simple approaches to build and train neural networks that invoke manifold constraints. To impose manifold constraints to the weights of a neural layer,  users only need  the following two steps,
	\begin{enumerate}
		\item Replace that neural layer with its corresponding neural layers from \codeobj{cdopt.nn} and \codeobj{cdopt.linen};
		\item Add the quadratic penalty terms to the loss function in the training process.
	\end{enumerate}
	Then various unconstrained optimizers (i.e. solvers for training neural networks) can be directly applied to train the modified neural network. Currently,  \CDOpt is tested to successfully support the following optimizers,
	\begin{itemize}
		\item All the optimizers from \Pkg{PyTorch} and  \Pkg{PyTorch-optimizer};
		\item All the optimizers from \Pkg{Optax}. 
	\end{itemize}

	\section{Comparison with Existing Packages}
	
	In this section, we present a brief comparison between \CDOpt and existing Python Riemannian optimization packages, including \Pkg{PyManopt}, \Pkg{Geoopt}, \Pkg{McTorch} and \Pkg{GeoTorch}. 
	The major difference between \CDOpt and these existing packages is how the \ref{Prob_Ori} is treated. Developed from constraint dissolving approaches, \CDOpt transfers \ref{Prob_Ori} into the unconstrained minimization of \ref{Prob_Pen}, where the construction of \ref{Prob_Pen} avoids the need of geometrical materials from differential geometry.

	\CDOpt only requires $c(x)$, $\A(x)$, and their derivatives to describe the Riemannian manifold, hence it provides great convenience in defining Riemannian manifolds. As demonstrated in \cite{xiao2022constraint},  it is usually easy to design the constraint dissolving mapping $\A$ for $\M$ since the constraint dissolving mappings only need to satisfy Assumption \ref{Assumption_2}. When only $c(x)$ is provided, we can choose the constraint dissolving mapping as in \eqref{Eq_general_cd_mapping} with appropriate $\sigma \geq 0$. Due to the great convenience in describing the manifolds,  \CDOpt provides a wider range of pre-defined manifold classes than existing Riemannian optimization packages. Moreover, \CDOpt enables users to define new manifolds only from the expression of the constraints, hence defining a new manifold in \CDOpt is significantly easier than the existing Riemannian optimization packages. A detailed comparison of the supported manifolds and essential materials for new manifolds is presented in Table \ref{Table:manifold}.

	As \CDOpt transforms \ref{Prob_Ori} into the unconstrained minimization of the constraint dissolving function, \CDOpt can directly and efficiently utilize existing unconstrained optimization solvers without modifying them to adapt the geometrical material. Currently, \CDOpt is compatible with various existing unconstrained optimization solvers. For the optimization problem defined by the \codeobj{problem} class, \CDOpt can directly call all the solvers provided by \Pkg{SciPy}, \Pkg{PyTorch-minimize}, \Pkg{PDFO}, \Pkg{jaxopt} and various other unconstrained optimization packages. For training neural networks, \CDOpt is compatible to the optimizers from  \Pkg{PyTorch}, \Pkg{PyTorch-optimizer}, \Pkg{Optax}, and various open-sourced optimizers.   However, most existing Riemannian optimization packages have to develop specialized solvers to utilize the geometrical materials. Therefore, their supported solvers are relatively limited. Detailed comparisons are presented in Table \ref{Table:solvers}.

	Furthermore, as all of the existing Riemannian optimization packages require geometrical materials or specialized solvers, their supported numerical backends are also relatively limited. For example, although \Pkg{PyManopt} supports \Pkg{PyTorch}, \Pkg{Autograd} and \Pkg{Tensorflow} in its AD modules, its manifold module and solver module only support \Pkg{Numpy} as the numerical backend. Additionally, the packages \Pkg{GeoTorch}, \Pkg{Geoopt} and \Pkg{McTorch} are only developed based on \Pkg{PyTorch}. As a comparison, \CDOpt supports various of numerical backends in its modules, including the \Pkg{Numpy}, \Pkg{PyTorch}, \Pkg{JAX}, etc. We present a detailed comparison in Table \ref{Table:backends}.

	\begin{table}[htbp]
		\tiny
		\centering
		\caption{Comparison of the compared packages in terms of manifold and necessary materials for defining a new manifold.}
		\label{Table:manifold}
		\renewcommand\tabularxcolumn[1]{m{#1}}
		\begin{tabularx}{\linewidth}{p{3.5cm}XXXXX}
			\hline
			& \textbf{CDopt} & \textbf{Pymanopt} & \textbf{Geoopt} & \textbf{McTorch} & \textbf{GeoTorch}\\ \hline
			Euclidean space & \cmark &\cmark & \cmark & \cmark & \cmark  \\
			Sphere & \cmark &\cmark & \cmark & \cmark & \cmark  \\
			Oblique manifold & \cmark &\cmark & \xmark & \cmark & \xmark  \\
			Stiefel manifold & \cmark &\cmark & \cmark & \cmark & \cmark  \\
			Grassmann manifold & \cmark &\cmark & \cmark & \cmark & \cmark  \\
			Generalized Stiefel manifold & \cmark &\xmark & \xmark & \xmark & \xmark  \\
			Hyperbolic manifold & \cmark &\xmark & \xmark & \cmark & \xmark  \\
			Symplectic Stiefel manifold & \cmark &\xmark & \xmark & \xmark & \xmark  \\
			Complex sphere & \cmark &\cmark & \xmark & \xmark & \xmark  \\
			Complex oblique manifold & \cmark &\xmark & \xmark & \xmark & \xmark  \\
			Complex Stiefel manifold & \cmark &\xmark & \xmark & \xmark & \xmark  \\
			Product manifold & \cmark &\cmark & \cmark & \cmark & \cmark  \\  \hline
			Defining new manifolds 
			& The expression of $c(x)$
			&  Exponential and  logarithmic maps, retraction, vector transport,
			\codeobj{egrad2rgrad}, \codeobj{ehess2rhess}, inner product, distance, norm
			&  Exponential and logarithmic maps, retraction, vector transport,
			\codeobj{egrad2rgrad}, \codeobj{ehess2rhess}, inner product, distance, norm
			&  Exponential and logarithmic maps, retraction, vector transport,
			\codeobj{egrad2rgrad}, \codeobj{ehess2rhess}, inner product, distance, norm
			& The explicit formation of a surjection from $\bb{R}^N$ onto $\M$ for some $N > 0$.\\ \hline
		\end{tabularx}
	\end{table}

	\begin{table}[htbp]
		\scriptsize
		\centering
		\caption{Comparison of available solvers in general optimization (the optimization tasks defined by derivatives of the objective functions and manifolds), and training neural network via PyTorch.}
		\label{Table:solvers}
		\renewcommand\tabularxcolumn[1]{m{#1}}
		\begin{tabularx}{\linewidth}{lXXXXX}
			\hline
			& \textbf{CDopt} & \textbf{Pymanopt} & \textbf{Geoopt} & \textbf{McTorch} & \textbf{GeoTorch}\\ \hline
			Derivate-free solvers& 	NEWUOA, BOBYQA, LINCOA from \Pkg{PDFO} & N.A.& N.A.& N.A.& N.A. \\ \hline
			First-order  solvers 
			& Conjugate gradient, BFGS, limit-memory BFGS  from \Pkg{SciPy}, \Pkg{PyTorch-minimize}   and \Pkg{jaxopt} 
			& Riemannian conjugate gradient, Riemannian steepest descent, Riemannian line-search.
			& N.A.
			& N.A.
			& N.A. \\ \hline
			Second-order solvers & Newton-CG, truncated Newton, trust-region with Krylov subsolver, trust-region Newton-CG from \Pkg{SciPy} and \Pkg{PyTorch-minimize} 
			& Riemannian trust-region
			& N.A.
			& N.A.
			& N.A. \\ \hline
			Training in \Pkg{PyTorch} & All the optimizers from \Pkg{PyTorch.optim} and \Pkg{PyTorch-optimizer}
			& N.A. 
			& Riemannian Adam, Riemannian  line-search, Riemannian SGD 
			& Riemannian Adagrad, Riemannian SGD 
			& All the optimizers from \Pkg{PyTorch.optim} and \Pkg{PyTorch-optimizer}\\ \hline 
			Training in \Pkg{JAX} & All the optimizers from \Pkg{Optax} 
			& N.A.
			& N.A.
			& N.A.
			& N.A. \\ \hline
		\end{tabularx}
	\end{table}
	
	\begin{table}[htbp]
		\tiny
		\centering
		\caption{Comparison of the supported backends in different components of the compared packages.}
		\label{Table:backends}
		\renewcommand\tabularxcolumn[1]{m{#1}}
		\begin{tabularx}{\linewidth}{p{3cm}XXXXX}
			\hline
			& \textbf{CDopt} & \textbf{Pymanopt} & \textbf{Geoopt} & \textbf{McTorch} & \textbf{GeoTorch}\\ \hline
			Automatic differentiation & \Pkg{Autograd}, \Pkg{PyTorch}, \Pkg{JAX} & \Pkg{Autograd}, \Pkg{PyTorch}, \Pkg{TensorFlow}, \Pkg{Theano} & \Pkg{PyTorch} & \Pkg{PyTorch} & \Pkg{PyTorch}\\\hline
			Manifold & \Pkg{NumPy}, \Pkg{SciPy}, \Pkg{PyTorch}, \Pkg{JAX} & \Pkg{NumPy}, \Pkg{SciPy} & \Pkg{PyTorch} & \Pkg{PyTorch} & \Pkg{PyTorch} \\\hline
			Optimization & \Pkg{SciPy},\Pkg{PyTorch},  \Pkg{JAX}, \Pkg{Optax} & \Pkg{NumPy} & \Pkg{PyTorch} & \Pkg{PyTorch} & \Pkg{PyTorch}\\ \hline
		\end{tabularx}
	\end{table}

	\section{Examples}
	In this section, we briefly introduce the features of \CDOpt through several illustrative examples. Interested readers can refer to \url{https://cdopt.github.io/md_files/examples.html} for more examples of applying \CDOpt to solve Riemannian optimization problems.

	\subsection{Orthogonal dictionary learning}
	CDOpt is easy-to-use when it is employed to solve standard Riemannian optimization problems through its \codeobj{problem} class. 
	CDOpt integrates various numerical backends such as \Pkg{Autograd}, \Pkg{PyTorch}, and \Pkg{JAX}. Various important features of these packages, including AD packages, GPU supports and JIT compilation, are performed behind \CDOpt's \codeobj{problem} and \codeobj{manifold} modules. Therefore, users are only required to provide 
	a small amount of set-up codes for solving Riemannian optimization problems through \CDOpt.

	We briefly demonstrate the easy-to-use features of \CDOpt by a simple example on orthogonal dictionary learning (ODL). Given data $\{y_i\}_{i = 1,...,m}$ generated by $y_i = Q z_i$, where $Q$ is a fixed unknown orthogonal matrix and each $z_i$ follows an i.i.d. Bernoulli-Gaussian distribution with parameter $\theta \in (0,1)$. ODL aims to recover the matrix $Z = [z_1,...,z_m] \in \mathbb{R}^{m\times n}$ and the orthogonal matrix $Q \in \mathbb{R}^{n\times n}$ from the given data $Y = [y_1, ..., y_m]^\top \in \mathbb{R}^{m\times n}$.

	Based on the $\ell_4$-norm maximization model proposed in \cite{hu2020anefficiency,zhai2020complete}, we consider the following optimization problem over the Stiefel manifold:
	\begin{equation}
		\label{Example_ODL}
		\begin{aligned}
			\min_{X = [x_1,...x_n] \in \mathbb{R}^{n\times n}} \quad & f(X) := \sum_{1\leq i\leq m, 1\leq j\leq n} -(y_i^\top x_j)^4\\
			\text{s. t.} \quad & X^TX = I_n. 
		\end{aligned}
	\end{equation}
	The following script demonstrates how to solve \eqref{Example_ODL}  through \CDOpt and \Pkg{JAX}:
	\pythonexternal{./codes/example1.py}

	\subsection{Training neural network with orthogonal weight normalization}

	In this subsection, we present a simple example of training VGG19 \cite{simonyan2014very} with orthogonally constrained convolutional kernels.  The following script further demonstrates that neural networks with manifold constraints can be easily defined and trained through \CDOpt. Since all the neural layers from \codeobj{cdopt.nn} have the same APIs as those from \codeobj{torch.nn}, modifying a neural network to involve manifold constraints through \CDOpt is simple and requires minimum modifications to the standard \Pkg{PyTorch} codes.
	\pythonexternal{./codes/example2_vgg.py}

	\section{Numerical Experiments}
	In this section, we test the performance of \CDOpt on several important applications of \ref{Prob_Ori}. For standard Riemannian optimization problems, we compare the numerical performance of \CDOpt with \Pkg{PyManopt} (version 2.2.0). On the other hand, for training neural networks over Riemannian manifolds, we choose to use \Pkg{GeoTorch} (version 0.3.0) and \Pkg{McTorch} (version 0.1.0) in the comparison.  All the experiments are performed on Ubuntu 20.02 platform with Intel Xeon 6342 and NVIDIA GeForce RTX 3090.  Moreover, we run all these numerical experiments in Python 3.8.10 with \Pkg{NumPy} 1.23, \Pkg{SciPy} 1.8.1, \Pkg{PyTorch} 1.9.0, \Pkg{CUDA} 11.4 and \Pkg{JAX} 0.3.14.

	\begin{rmk}
		It is worth mentioning that the limited memory BFGS (L-BFGS) and conjugate gradient (CG) solvers in \Pkg{SciPy} package are implemented in FORTRAN and C++, respectively, hence exhibiting high computational efficiency. These two solvers are integrated into \Pkg{SciPy} via \Pkg{F2PY} interfaces. In contrast, all the solvers in \Pkg{PyManopt} are developed entirely in Python. 
		
		This distinction highlights one of the major advantages of the \CDOpt package: \CDOpt admits direct implementations of highly efficient unconstrained solvers for Riemannian optimization problems. Nonetheless, due to the inherent differences in the implementation languages of the solvers in \CDOpt and \Pkg{PyManopt}, a direct comparison of CPU times might not be equitable for the solvers in \Pkg{PyManopt}. Therefore, for better illustrations of the numerical performance of \CDOpt, we report both the CPU time and the number of evaluations of function values and gradients in our numerical experiments.
	\end{rmk}

	\subsection{Choices of different constraint dissolving mappings}
	
	Section 4.1 demonstrates that \CDOpt includes several predefined well-established Riemannian manifolds.
	Additionally, \CDOpt supports the customized definition for manifolds, which only requires the expression of the constraints function $c$. Then \CDOpt automatically generates all other essential components using \eqref{Eq_general_cd_mapping} and AD algorithms.
	
	The predefined manifolds listed in Table \eqref{Table_cdopt_manifold} require only matrix-matrix multiplications to compute constraint dissolving mappings. In contrast, when employing $\A_c$ from \eqref{Eq_general_cd_mapping} as the constraint dissolving mapping, one must solve $p\times p$ linear equations to obtain $\A_c(x)$. Intuitively, for the predefined manifolds in Table \eqref{Table_cdopt_manifold}, it is generally more computationally efficient to use the mappings listed in Table \ref{Table_implementation} rather than those determined by   \eqref{Eq_general_cd_mapping}.

	In this subsection, we aim to perform preliminary numerical experiments to evaluate the computational efficiency of these two different choices of  constraint dissolving mappings in \CDOpt. In our numerical experiments, we choose the sphere, Stiefel manifold, and symplectic Stiefel manifold for the comparison. For these manifolds, with a given randomly generated $x \in \bb{R}^n$, we measure the CPU time for computing the $\A(x)$, $\Ja(x)$, and $\DJa(x)$ with respect to different choices of the constraint dissolving mappings from Table \ref{Table_implementation} and \eqref{Eq_general_cd_mapping}, respectively. Each test instance is replicated for $1000$ times by using the built-in \texttt{timeit} module in Python, and we report the average CPU time in seconds.

	Figure \ref{Fig_differentA} shows the comparison of the computation time for different constraint dissolving mappings in \CDOpt. From the curves in Figure \ref{Fig_differentA}, it can be inferred that the computational time for the mappings $\A$, $\Ja$, and $\DJa$ in Table \ref{Table_implementation}, is considerably shorter compared to the cases where $\A$ is selected based on \eqref{Eq_general_cd_mapping}. Consequently, for any Riemannian optimization problem where the manifold is listed in Table \ref{Table_cdopt_manifold}, the predefined manifolds in CDOpt are recommended for better computational efficiency.

	\begin{figure}[!htbp]
		\centering
		\subfigure[Sphere]{
			\begin{minipage}[t]{0.33\linewidth}
				\centering
				\includegraphics[width=\linewidth]{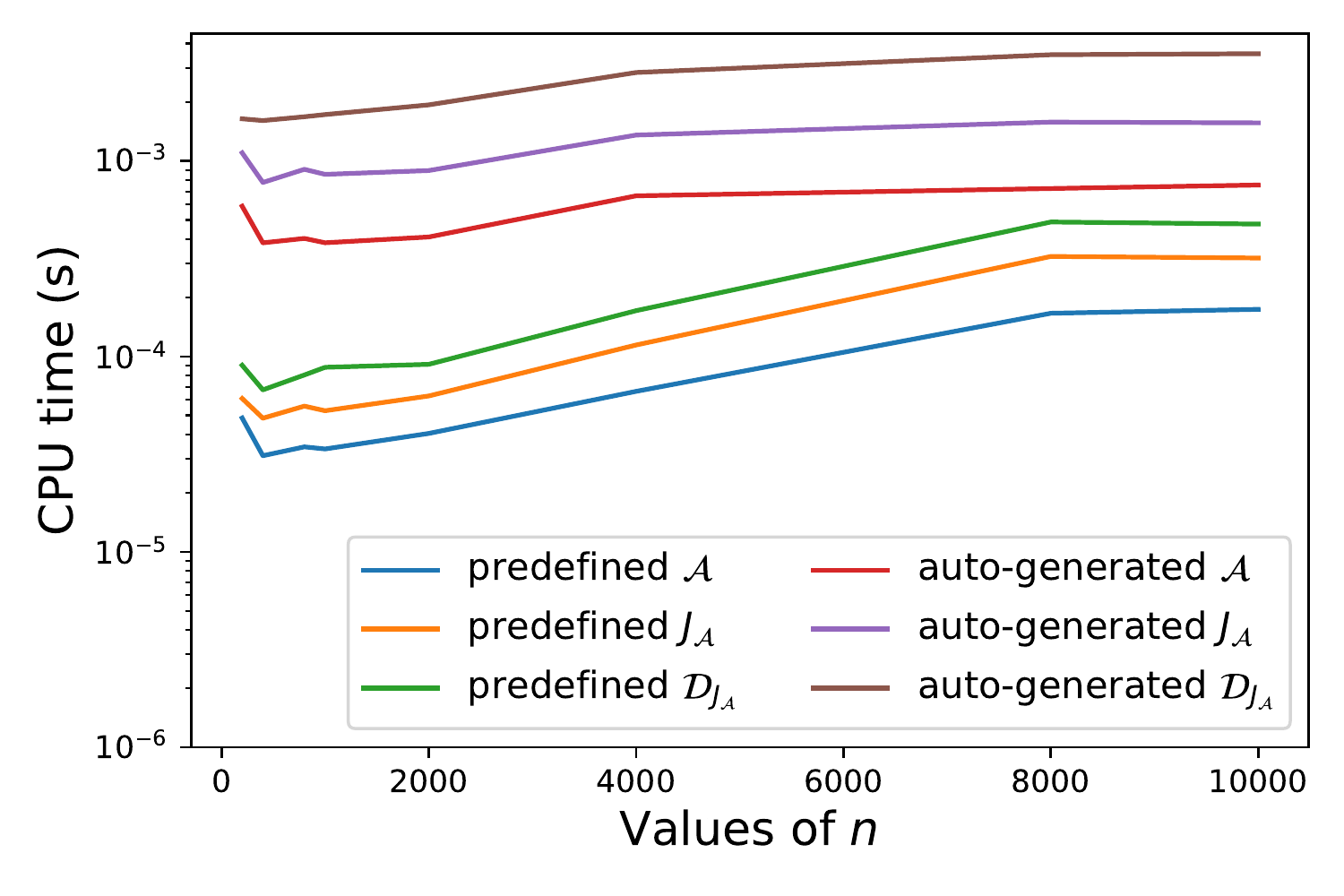}
				\label{Fig:Fig_differentA_Stiefel_p1}
			\end{minipage}%
		}%
		\subfigure[Stiefel with $s = 10$]{
			\begin{minipage}[t]{0.33\linewidth}
				\centering
				\includegraphics[width=\linewidth]{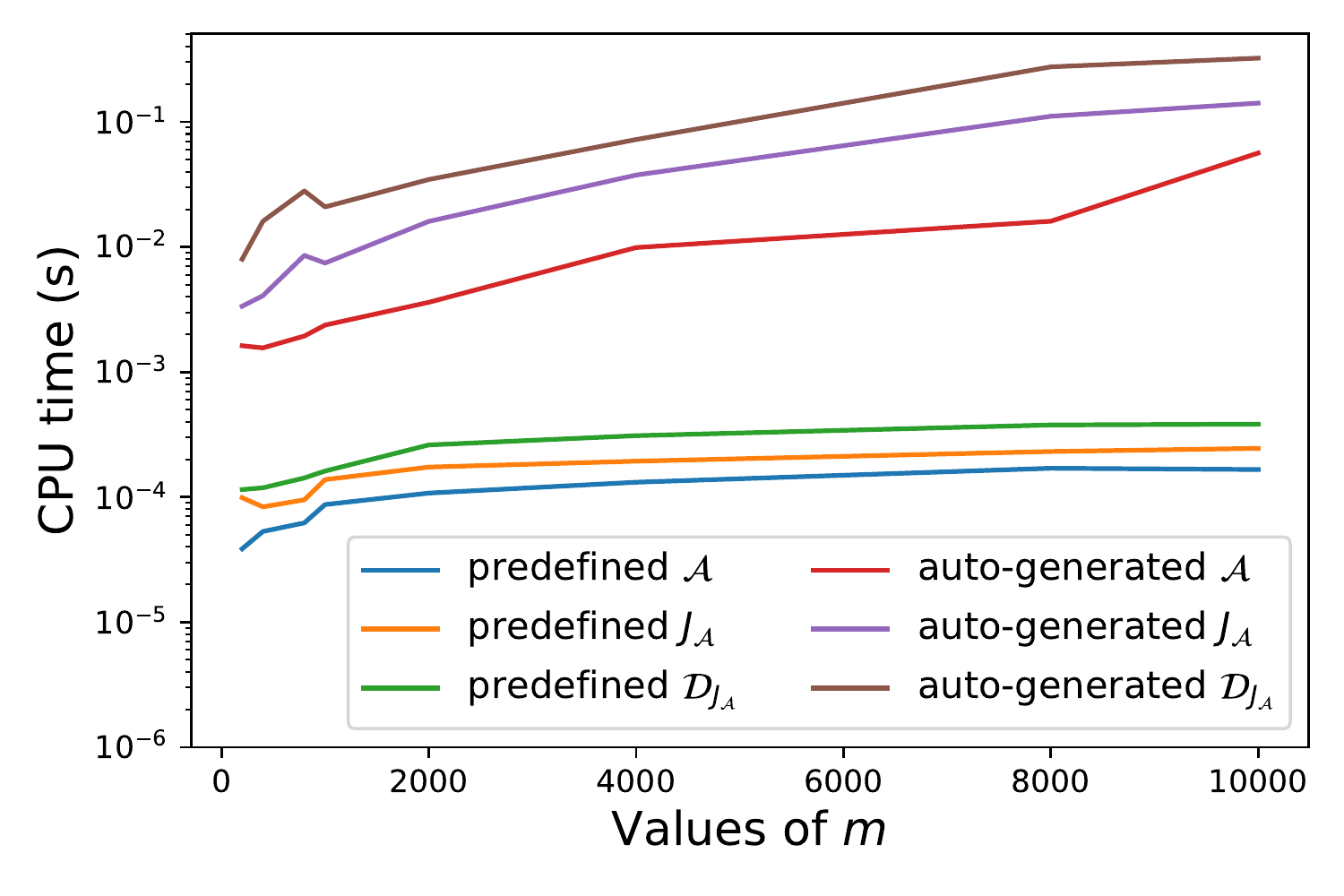}
				\label{Fig:Fig_differentA_Stiefel_p10}
			\end{minipage}%
		}%
		\subfigure[Symplectic Stiefel with $s = 10$]{
			\begin{minipage}[t]{0.33\linewidth}
				\centering
				\includegraphics[width=\linewidth]{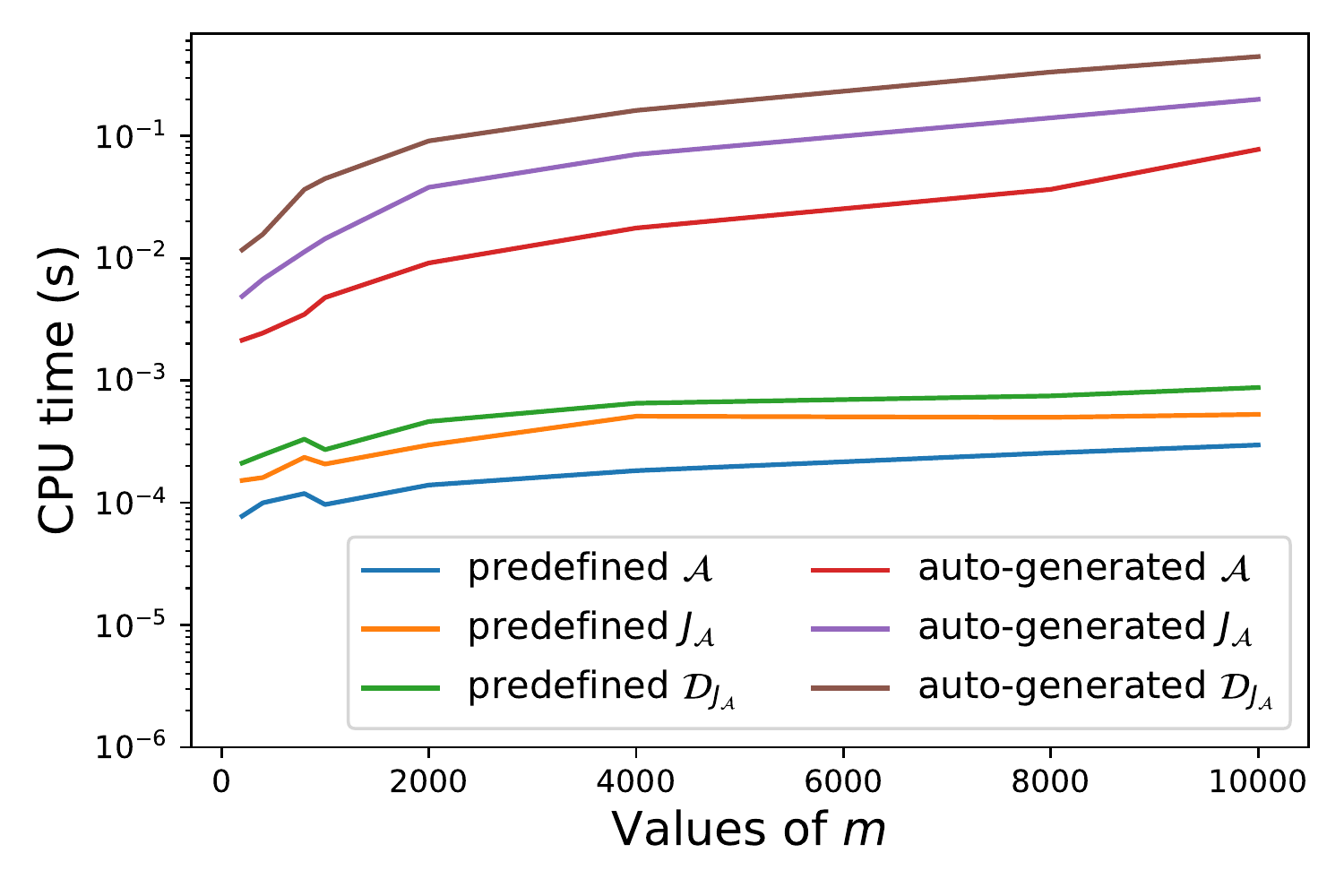}
				\label{Fig:Fig_differentA_sympStiefel_p10}
			\end{minipage}%
		}%
		\caption{Comparisons of the computational times for different choices of the constraint dissolving mappings in \CDOpt. Here ``predefined'' refers to choosing $\A$ as specified in Table \ref{Table_implementation}, and ``auto-generated'' means choosing $\A$ by \eqref{Eq_general_cd_mapping}.} 
		\label{Fig_differentA}
	\end{figure}

	\subsection{Nearest correlation matrix}
	\label{Subsection_Numerical_NCM}
	In this subsection, we test the numerical performance of \CDOpt on optimization problems defined by its \codeobj{problem} interface. Moreover, we compare \CDOpt with the state-of-the-art Riemannian optimization package \Pkg{PyManopt} (version 2.2.0), where the Riemannian optimization problems must be defined through its \codeobj{problem} interface. 
	
	Our test example is the problem of finding the nearest low-rank correlation matrix (NCM) to a given matrix $G \in \bb{R}^{n\times p}$, which can be reshaped as an optimization problem over the oblique manifold
	\begin{equation}
		\label{Example_NCM}
		\begin{aligned}
			\min_{X \in \bb{R}^{n\times p}} \quad & f(X) = \frac{1}{2} \norm{ H\circ(XX\tp - G)}_F^2\\
			\text{s.t.} \quad & \mathrm{Diag}(XX\tp) = I_n,
		\end{aligned}
	\end{equation}
	where $H \in \bb{R}^{n\times n}$ is a nonnegative weight matrix. For all the numerical experiments in this subsection, we generate the matrix $\hat{G}$ from the gene expression data provided in \cite{li2010inexact}\footnote{available at \url{https://blog.nus.edu.sg/mattohkc/files/2019/11/covseldata.zip}}. Then the matrix $G$ is generated by perturbing $\hat{G}$  by $G = (1-\theta)\hat{G} + \theta E$, where $\theta \geq 0$ is a prefixed parameter, and $E \in \bb{R}^{n\times n}$ is a randomly generated matrix with all of its diagonal entry equals to $1$. The weight matrix $H$ in \eqref{Example_NCM} is chosen as a symmetric matrix whose entries are uniformly distributed in $[0, 1]$.  
	
	We set the penalty parameter $\beta$ for \ref{Prob_Pen} by setting \codeobj{beta = 'auto'} in the instantiation of the \codeobj{problem} class in \CDOpt. We choose the L-BFGS, CG and trust-region solvers from \Pkg{SciPy} to test the numerical performance of \CDOpt. For comparison, we choose the Riemannian CG solver and Riemannian trust-region solver from \Pkg{PyManopt} since L-BFGS is not available in \Pkg{PyManopt}. For \CDOpt, we stop these solvers when the maximum number of iterations exceeds $10000$ or the norm of the gradient its corresponding constraint dissolving function is less than $10^{-5}$. For the solvers from \Pkg{PyManopt}, we stop these solvers once the norm of its Riemannian gradient is smaller than $10^{-5}$, or the maximum number of iterations exceeds $10000$. All the other parameters are fixed as their default values. Additionally, all the solvers start from the same initial point, which is randomly generated on the oblique manifold in each test instance. 
	
	Furthermore, we fix the numerical backends as the \Pkg{PyTorch} package in both \CDOpt and \Pkg{PyManopt}. We only provide the expression of the objective function in all the test instances, thus the gradient and the Hessian of $f$ are automatically computed by the AD packages from the \Pkg{PyTorch} backends in \CDOpt or \Pkg{PyManopt}. 
	
	Table \ref{Table_NCM_ER_CPU} -- Table \ref{Table_NCM_Lymph_GPU} report the numerical results on the comparison of \CDOpt and \Pkg{PyManopt}, where the detailed meanings of their column headers are presented in Table \ref{Table_header}. It is worth mentioning that in  \Pkg{PyManopt} (version 2.2.0), the number of evaluations for gradients and Hessians is unavailable in its built-in Riemannian trust-region solver (pymanopt-rtr). Therefore, those entries are recorded as ``None'' in Table \ref{Table_NCM_ER_CPU} - Table \ref{Table_NCM_Lymph_GPU}. 
	
	\begin{table}[htbp]
		\centering
		
		\caption{The detailed meanings of the column headers in the tables that report the results of numerical experiments. Here $x^*$ denoted the output of the solvers. }
		\label{Table_header}
		\begin{tabular}{l|l}
			\hline
			Abbreviation & Detailed meaning \\ \hline
			fval        &   The value of $f(x^*)$     \\ \hline
			iter        &   Total number of iterations \\ \hline
			eval\_f      & Total number of evaluations of the function values of $f$ \\ \hline 
			eval\_grad   & Total number of evaluations of the gradients of $f$ \\ \hline 
			stat        & The $\ell_2$-norm of the Riemannian gradient of $f$ at $x^*$ \\\hline   
			feas        & The value of $\norm{c(x^*)}$ \\ \hline 
			time (s)    & Total running time of the solver (in seconds) \\\hline
			acc         & Test accuracy at the final epoch \\ \hline
			time/epoch  & The averaged running time (in seconds) per epoch \\ \hline
		\end{tabular}
	\end{table}

	From Table \ref{Table_NCM_ER_CPU} -- Table \ref{Table_NCM_Lymph_GPU}, we can observe that \CDOpt achieves comparable performance as \Pkg{PyManopt} in all the numerical experiments. Moreover, although \CDOpt needs to reshape the variables in each iteration, applying the conjugate gradient method and trust-region method from \Pkg{SciPy} package shows superior performance over their Riemannian counterparts from \Pkg{PyManopt} when $p$ exceeds $100$, particularly on GPU architectures. Furthermore, benefiting from the highly efficient L-BFGS solver that is programmed in FORTRAN and wrapped by the \Pkg{SciPy} package, \CDOpt gains significant advantages against the compared Riemannian solvers from \Pkg{PyManopt}. These numerical results indicate the great potential of our developed \CDOpt package.

	\begin{table}[htbp]
		\centering
		\tiny
		\caption{Nearest correlation matrix problem on the ER dataset on CPUs. }
		\label{Table_NCM_ER_CPU}
		\begin{tabular}{l|l|ccccccc}
			\hline
			\multicolumn{2}{c|}{}                  & fval & iter & eval\_f & eval\_grad & stat & feas & time (s)  \\ \hline
			\multirow{5}{*}{$p = 10$}   
			& cdopt-lbfgs & 2.30e+03  & 178  & 188 & 188  & 9.93e-06     & 4.50e-15     & 0.63 \\
			& cdopt-cg & 2.30e+03  & 238  & 363 & 363  & 9.26e-06     & 4.29e-15     & 0.96 \\
			& cdopt-tr & 2.30e+03  & 75 & 76 & 71  & 7.88e-06     & 4.35e-15     & 4.06 \\\cline{2-9}
			& pymanopt-rcg   & 2.30e+03  & 314  & 314  & 314    & 9.56e-06     & 5.42e-15     & 0.74 \\
			& pymanopt-rtr  & 2.30e+03  & 41  & None  & None    & 2.75e-06     & 5.54e-15     & 1.42 \\ \hline
			\multirow{5}{*}{$p = 20$}   
			& cdopt-lbfgs & 7.69e+02  & 196  & 205 & 205  & 8.79e-06     & 4.62e-15     & 0.83 \\
			& cdopt-cg & 7.69e+02  & 250  & 388 & 388  & 9.44e-06     & 2.78e-13     & 7.56 \\
			& cdopt-tr & 7.69e+02  & 63 & 64 & 58  & 2.54e-07     & 3.92e-15     & 40.37 \\ \cline{2-9}
			& pymanopt-rcg   & 7.69e+02  & 301  & 301  & 301    & 8.66e-06     & 6.01e-15     & 6.40 \\
			& pymanopt-rtr  & 7.69e+02  & 26  & None  & None    & 3.02e-08     & 6.14e-15     & 10.38 \\\hline
			\multirow{5}{*}{$p = 30$}   
			& cdopt-lbfgs & 4.27e+02  & 323  & 337 & 337  & 8.45e-06     & 3.55e-15     & 2.03 \\
			& cdopt-cg & 4.27e+02  & 732  & 1089 & 1089  & 8.22e-06     & 5.63e-15     & 24.95 \\
			& cdopt-tr & 4.27e+02  & 73 & 74 & 66  & 9.48e-06     & 4.88e-15     & 36.32 \\ \cline{2-9}
			& pymanopt-rcg   & 4.27e+02  & 599  & 599  & 599    & 7.01e-06     & 5.58e-15     & 12.24 \\
			& pymanopt-rtr  & 4.27e+02  & 31  & None  & None    & 1.64e-07     & 6.45e-15     & 29.63 \\\hline
			\multirow{5}{*}{$p = 50$} 
			& cdopt-lbfgs & 2.34e+02  & 428  & 445 & 445  & 7.97e-06     & 4.61e-13     & 4.86 \\
			& cdopt-cg & 2.34e+02  & 732  & 1107 & 1107  & 6.31e-06     & 4.09e-15     & 26.19 \\
			& cdopt-tr & 2.34e+02  & 59 & 60 & 52  & 5.02e-07     & 4.73e-15     & 61.21 \\\cline{2-9}
			& pymanopt-rcg   & 2.34e+02  & 1175  & 1175  & 1175    & 9.98e-06     & 5.68e-15     & 27.56 \\
			& pymanopt-rtr  & 2.34e+02  & 34  & None  & None    & 8.11e-08     & 5.68e-15     & 57.05 \\\hline
			\multirow{5}{*}{$p = 100$} 
			& cdopt-lbfgs & 1.48e+02  & 561  & 579 & 579  & 8.89e-06     & 1.54e-12     & 6.28 \\
			& cdopt-cg & 1.48e+02  & 1256  & 1875 & 1875  & 9.61e-06     & 3.99e-15     & 64.08 \\
			& cdopt-tr & 1.48e+02  & 94 & 95 & 81  & 5.51e-08     & 4.16e-15     & 209.16 \\ \cline{2-9}
			& pymanopt-rcg   & 1.48e+02  & 1535  & 1535  & 1535    & 9.45e-06     & 5.67e-15     & 57.02 \\
			& pymanopt-rtr  & 1.48e+02  & 44  & None  & None    & 6.09e-08     & 5.54e-15     & 405.36 \\\hline
			\multirow{5}{*}{$p = 200$} 
			& cdopt-lbfgs & 1.30e+02  & 684  & 704 & 704  & 6.69e-06     & 5.37e-12     & 12.59 \\
			& cdopt-cg & 1.30e+02  & 1157  & 1751 & 1751  & 8.68e-06     & 8.40e-14     & 74.15 \\
			& cdopt-tr & 1.30e+02  & 64 & 65 & 57  & 5.77e-07     & 4.54e-15     & 224.90 \\\cline{2-9}
			& pymanopt-rcg   & 1.30e+02  & 1738  & 1738  & 1738    & 9.76e-06     & 4.73e-15     & 89.65 \\
			& pymanopt-rtr  & 1.30e+02  & 41  & None  & None    & 1.93e-07     & 4.75e-15     & 375.63 \\\hline
		\end{tabular}
	\end{table}

	\begin{table}[htbp]
		\centering
		\tiny
		\caption{Nearest correlation matrix problem on the ER dataset on GPUs. }
		\label{Table_NCM_ER_GPU}
		\begin{tabular}{l|l|ccccccc}
			\hline
			\multicolumn{2}{c|}{}                  & fval & iter & eval\_f & eval\_grad & stat & feas & time (s)  \\ \hline
			\multirow{5}{*}{$p = 10$}   
			& cdopt-lbfgs & 2.30e+03  & 182  & 192 & 192  & 9.95e-06     & 1.93e-14     & 0.47 \\
			& cdopt-cg & 2.30e+03  & 230  & 346 & 346  & 8.40e-06     & 5.68e-15     & 0.72 \\
			& cdopt-tr & 2.30e+03  & 75 & 76 & 71  & 2.74e-06     & 4.09e-15     & 2.23 \\\cline{2-9}
			& pymanopt-rcg   & 2.30e+03  & 314  & 314  & 314    & 9.56e-06     & 5.42e-15     & 0.74 \\
			& pymanopt-rtr  & 2.30e+03  & 41  & None  & None    & 2.75e-06     & 5.54e-15     & 1.42 \\ \hline
			\multirow{5}{*}{$p = 20$}   
			& cdopt-cg & 7.69e+02  & 256  & 374 & 374  & 9.39e-06     & 2.95e-14     & 1.36 \\
			& cdopt-lbfgs & 7.69e+02  & 198  & 207 & 207  & 6.43e-06     & 4.49e-15     & 0.64 \\
			& cdopt-tr & 7.69e+02  & 65 & 66 & 60  & 5.87e-06     & 4.12e-15     & 5.22 \\ \cline{2-9}
			& pymanopt-rcg   & 7.69e+02  & 299  & 299  & 299    & 9.77e-06     & 5.74e-15     & 1.36 \\
			& pymanopt-rtr  & 7.69e+02  & 26  & None  & None    & 2.98e-08     & 5.74e-15     & 5.51 \\\hline
			\multirow{5}{*}{$p = 30$}  
			& cdopt-lbfgs & 4.27e+02  & 318  & 328 & 328  & 9.92e-06     & 2.99e-15     & 1.31 \\
			& cdopt-cg & 4.27e+02  & 731  & 1078 & 1078  & 6.17e-06     & 3.31e-14     & 4.27 \\
			& cdopt-tr & 4.27e+02  & 78 & 79 & 71  & 2.05e-07     & 4.37e-15     & 6.30 \\ \cline{2-9}
			& pymanopt-rcg   & 4.27e+02  & 711  & 711  & 711    & 8.85e-06     & 5.70e-15     & 5.16 \\
			& pymanopt-rtr  & 4.27e+02  & 31  & None  & None    & 1.59e-07     & 6.27e-15     & 8.06 \\\hline
			\multirow{5}{*}{$p = 50$} 
			& cdopt-lbfgs & 2.34e+02  & 422  & 436 & 436  & 8.47e-06     & 1.60e-13     & 2.16 \\
			& cdopt-cg & 2.34e+02  & 642  & 948 & 948  & 6.80e-06     & 5.35e-15     & 3.64 \\
			& cdopt-tr & 2.34e+02  & 74 & 75 & 64  & 2.04e-06     & 4.50e-15     & 8.57 \\\cline{2-9}
			& pymanopt-rcg   & 2.34e+02  & 1194  & 1194  & 1194    & 7.26e-06     & 5.48e-15     & 7.24 \\
			& pymanopt-rtr  & 2.34e+02  & 34  & None  & None    & 4.58e-08     & 5.47e-15     & 23.41 \\\hline
			\multirow{5}{*}{$p = 100$} 
			& cdopt-lbfgs & 1.48e+02  & 575  & 595 & 595  & 8.16e-06     & 1.45e-13     & 4.56 \\
			& cdopt-cg & 1.48e+02  & 1045  & 1550 & 1550  & 9.90e-06     & 6.23e-14     & 8.59 \\
			& cdopt-tr & 1.48e+02  & 91 & 92 & 81  & 5.27e-08     & 4.29e-15     & 25.19 \\\cline{2-9}
			& pymanopt-rcg   & 1.48e+02  & 2040  & 2040  & 2040    & 9.49e-06     & 5.76e-15     & 20.34 \\
			& pymanopt-rtr  & 1.48e+02  & 44  & None  & None    & 6.24e-08     & 5.56e-15     & 195.45 \\\hline
			\multirow{5}{*}{$p = 200$} 
			& cdopt-lbfgs & 1.30e+02  & 671  & 701 & 701  & 9.55e-06     & 1.88e-11     & 10.67 \\
			& cdopt-cg & 1.30e+02  & 1231  & 1856 & 1856  & 8.20e-06     & 3.47e-15     & 17.57 \\
			& cdopt-tr & 1.30e+02  & 67 & 68 & 58  & 1.57e-07     & 4.40e-15     & 36.79 \\\cline{2-9}
			& pymanopt-rcg   & 1.30e+02  & 1794  & 1794  & 1794    & 9.13e-06     & 4.88e-15     & 34.44 \\
			& pymanopt-rtr  & 1.30e+02  & 41  & None  & None    & 1.46e-07     & 4.78e-15     & 186.64 \\\hline
		\end{tabular}
	\end{table}

	\begin{table}[htbp]
		\centering
		\tiny
		\caption{Nearest correlation matrix problem on the Lymph dataset on CPUs. }
		\label{Table_NCM_Lymph_CPU}
		\begin{tabular}{l|l|ccccccc}
			\hline
			\multicolumn{2}{c|}{}                  & fval & iter & eval\_f & eval\_grad & stat & feas & time (s)  \\ \hline
			\multirow{5}{*}{$p = 10$}   
			& cdopt-lbfgs & 1.93e+03  & 161  & 168 & 168  & 9.28e-06     & 2.62e-14     & 0.51 \\
			& cdopt-cg & 1.93e+03  & 223  & 333 & 333  & 8.21e-06     & 4.25e-15     & 0.78 \\
			& cdopt-tr & 1.93e+03  & 56 & 57 & 51  & 2.23e-06     & 4.19e-15     & 2.64 \\\cline{2-9}
			& pymanopt-rcg   & 1.93e+03  & 248  & 248  & 248    & 9.32e-06     & 5.25e-15     & 0.87 \\
			& pymanopt-rtr  & 1.93e+03  & 34  & None  & None    & 4.75e-10     & 5.37e-15     & 1.54 \\ \hline
			\multirow{5}{*}{$p = 20$}  
			& cdopt-lbfgs & 6.55e+02  & 217  & 226 & 226  & 9.55e-06     & 3.77e-15     & 0.89 \\
			& cdopt-cg & 6.55e+02  & 349  & 524 & 524  & 9.28e-06     & 1.79e-14     & 6.41 \\
			& cdopt-tr & 6.55e+02  & 60 & 61 & 58  & 2.23e-06     & 4.43e-15     & 20.18 \\\cline{2-9}
			& pymanopt-rcg   & 6.55e+02  & 462  & 462  & 462    & 9.22e-06     & 4.90e-15     & 11.64 \\
			& pymanopt-rtr  & 6.55e+02  & 31  & None  & None    & 2.07e-09     & 6.04e-15     & 17.35 \\\hline
			\multirow{5}{*}{$p = 30$}   
			& cdopt-lbfgs & 3.56e+02  & 351  & 364 & 364  & 9.97e-06     & 3.78e-15     & 1.80 \\
			& cdopt-cg & 3.56e+02  & 443  & 659 & 659  & 9.29e-06     & 3.98e-15     & 7.52 \\
			& cdopt-tr & 3.56e+02  & 69 & 70 & 63  & 3.09e-07     & 4.06e-15     & 32.78 \\\cline{2-9}
			& pymanopt-rcg   & 3.56e+02  & 701  & 701  & 701    & 9.37e-06     & 5.43e-15     & 14.49 \\
			& pymanopt-rtr  & 3.56e+02  & 50  & None  & None    & 6.16e-07     & 5.58e-15     & 79.33 \\\hline
			\multirow{5}{*}{$p = 50$} 
			& cdopt-lbfgs & 1.80e+02  & 374  & 389 & 389  & 9.64e-06     & 3.08e-15     & 2.40 \\
			& cdopt-cg & 1.80e+02  & 738  & 1107 & 1107  & 6.52e-06     & 3.65e-15     & 29.03 \\
			& cdopt-tr & 1.80e+02  & 60 & 61 & 55  & 9.36e-06     & 5.16e-15     & 40.50 \\\cline{2-9}
			& pymanopt-rcg   & 1.80e+02  & 993  & 993  & 993    & 9.97e-06     & 5.28e-15     & 25.93 \\
			& pymanopt-rtr  & 1.80e+02  & 46  & None  & None    & 3.40e-06     & 6.31e-15     & 42.89 \\\hline
			\multirow{5}{*}{$p = 100$} 
			& cdopt-lbfgs & 1.03e+02  & 628  & 651 & 651  & 9.53e-06     & 1.51e-13     & 6.10 \\
			& cdopt-cg & 1.03e+02  & 1437  & 2155 & 2155  & 9.95e-06     & 4.52e-15     & 69.45 \\
			& cdopt-tr & 1.03e+02  & 89 & 90 & 76  & 1.79e-07     & 4.02e-15     & 170.99 \\\cline{2-9}
			& pymanopt-rcg   & 1.03e+02  & 2763  & 2763  & 2763    & 8.92e-06     & 5.33e-15     & 97.32 \\
			& pymanopt-rtr  & 1.03e+02  & 45  & None  & None    & 6.17e-07     & 5.18e-15     & 140.68 \\\hline
			\multirow{5}{*}{$p = 200$} 
			& cdopt-lbfgs & 8.84e+01  & 522  & 540 & 540  & 8.97e-06     & 1.14e-11     & 8.44 \\
			& cdopt-cg & 8.84e+01  & 716  & 1074 & 1074  & 9.94e-06     & 1.87e-13     & 42.01 \\
			& cdopt-tr & 8.84e+01  & 58 & 59 & 49  & 2.71e-07     & 4.63e-15     & 155.79 \\\cline{2-9}
			& pymanopt-rcg   & 8.84e+01  & 2299  & 2299  & 2299    & 9.95e-06     & 4.35e-15     & 113.34 \\
			& pymanopt-rtr  & 8.84e+01  & 41  & None  & None    & 2.51e-08     & 4.53e-15     & 388.71 \\\hline
		\end{tabular}
	\end{table}

	\begin{table}[htbp]
		\centering
		\tiny
		\caption{Nearest correlation matrix problem on the Lymph dataset on GPUs. }
		\label{Table_NCM_Lymph_GPU}
		\begin{tabular}{l|l|ccccccc}
			\hline
			\multicolumn{2}{c|}{}                  & fval & iter & eval\_f & eval\_grad & stat & feas & time (s)  \\ \hline
			\multirow{5}{*}{$p = 10$}   
			& cdopt-lbfgs & 1.93e+03  & 156  & 162 & 162  & 9.65e-06     & 7.00e-15     & 0.39 \\
			& cdopt-cg & 1.93e+03  & 208  & 320 & 320  & 9.59e-06     & 2.20e-14     & 0.56 \\
			& cdopt-tr & 1.93e+03  & 55 & 56 & 50  & 6.21e-08     & 3.50e-15     & 1.74 \\\cline{2-9}
			& pymanopt-rcg   & 1.93e+03  & 248  & 248  & 248    & 9.36e-06     & 4.68e-15     & 0.48 \\
			& pymanopt-rtr  & 1.93e+03  & 34  & None  & None    & 3.73e-10     & 5.23e-15     & 1.11 \\\hline
			\multirow{5}{*}{$p = 20$}
			& cdopt-lbfgs & 6.55e+02  & 223  & 230 & 230  & 8.31e-06     & 3.85e-15     & 0.65 \\
			& cdopt-cg & 6.55e+02  & 391  & 596 & 596  & 8.35e-06     & 2.97e-15     & 2.48 \\
			& cdopt-tr & 6.55e+02  & 57 & 58 & 56  & 7.74e-06     & 4.04e-15     & 3.10 \\\cline{2-9}
			& pymanopt-rcg   & 6.55e+02  & 511  & 511  & 511    & 7.61e-06     & 5.01e-15     & 2.68 \\
			& pymanopt-rtr  & 6.55e+02  & 31  & None  & None    & 2.20e-09     & 5.99e-15     & 7.40 \\\hline
			\multirow{5}{*}{$p = 30$}   
			& cdopt-lbfgs & 3.56e+02  & 357  & 373 & 373  & 8.49e-06     & 4.08e-15     & 1.20 \\
			& cdopt-cg & 3.56e+02  & 463  & 699 & 699  & 6.43e-06     & 3.70e-15     & 2.29 \\
			& cdopt-tr & 3.56e+02  & 69 & 70 & 63  & 3.21e-07     & 3.41e-15     & 4.99 \\\cline{2-9}
			& pymanopt-rcg   & 3.56e+02  & 614  & 614  & 614    & 8.79e-06     & 5.28e-15     & 5.38 \\
			& pymanopt-rtr  & 3.56e+02  & 50  & None  & None    & 6.18e-07     & 5.70e-15     & 31.51 \\\hline
			\multirow{5}{*}{$p = 50$}
			& cdopt-lbfgs & 1.80e+02  & 374  & 388 & 388  & 8.40e-06     & 3.58e-15     & 1.77 \\
			& cdopt-cg & 1.80e+02  & 681  & 1005 & 1005  & 9.15e-06     & 2.38e-13     & 4.70 \\
			& cdopt-tr & 1.80e+02  & 60 & 61 & 55  & 9.46e-06     & 5.62e-15     & 7.74 \\\cline{2-9}
			& pymanopt-rcg   & 1.80e+02  & 1130  & 1130  & 1130    & 7.61e-06     & 5.20e-15     & 8.27 \\
			& pymanopt-rtr  & 1.80e+02  & 46  & None  & None    & 3.08e-06     & 6.03e-15     & 17.18 \\\hline
			\multirow{5}{*}{$p = 100$}
			& cdopt-lbfgs & 1.03e+02  & 641  & 657 & 657  & 7.87e-06     & 2.76e-14     & 4.75 \\
			& cdopt-cg & 1.03e+02  & 1615  & 2417 & 2417  & 9.72e-06     & 4.83e-15     & 12.77 \\
			& cdopt-tr & 1.03e+02  & 88 & 89 & 75  & 4.76e-06     & 4.52e-15     & 22.09 \\\cline{2-9}
			& pymanopt-rcg   & 1.03e+02  & 2734  & 2734  & 2734    & 9.20e-06     & 5.33e-15     & 28.66 \\
			& pymanopt-rtr  & 1.03e+02  & 45  & None  & None    & 1.01e-06     & 5.26e-15     & 66.00 \\\hline
			\multirow{5}{*}{$p = 200$}
			& cdopt-lbfgs & 8.84e+01  & 519  & 537 & 537  & 9.32e-06     & 3.92e-12     & 7.19 \\
			& cdopt-cg & 8.84e+01  & 611  & 923 & 923  & 8.56e-06     & 9.29e-15     & 7.33 \\
			& cdopt-tr & 8.84e+01  & 56 & 57 & 49  & 5.78e-06     & 8.23e-15     & 26.17 \\\cline{2-9}
			& pymanopt-rcg   & 8.84e+01  & 1937  & 1937  & 1937    & 9.03e-06     & 4.34e-15     & 33.71 \\
			& pymanopt-rtr  & 8.84e+01  & 41  & None  & None    & 2.52e-08     & 4.38e-15     & 201.75 \\\hline
		\end{tabular} 
	\end{table}

	\subsection{Nonlinear eigenvalue problem}
	\label{Subsection_Numerical_NEP}
	In this subsection, we test the numerical performance of the compared packages on solving the following smooth optimization problem over the Stiefel manifold:
	\begin{equation}\label{neg}
		\begin{aligned}
			\min_{X \in \bb{R}^{n\times p}} \quad &f(X) = \frac{1}{2} \tr\left( X\tp LX \right) + \frac{\alpha}{4} \rho(X)\tp L^{\dagger} \rho(X)\\
			\text{s. t.} \quad & X\tp X = I_p,
		\end{aligned}
	\end{equation}
	where $\rho(X):= \mathrm{diag}(XX\tp)$, $L$ is a tridiagonal matrix with $2$ as its diagonal entries and $-1$ as its subdiagonal and superdiagonal entries. Moreover, $L^\dagger$ refers to the pseudo-inverse of $L$. Problem \eqref{neg}, arisen from electronic structure calculation, has become a standard testing problem for investigating the convergence of self-consistent field methods due to its simplicity \cite{lin2013elliptic}.

	In all the numerical experiments of this subsection, we provide the expression of the function value, the gradient, and the Hessian-vector product, and set the numerical backends for \CDOpt and \Pkg{PyManopt} as \Pkg{Numpy}. 
	We choose the L-BFGS, CG, and trust-region solvers from \Pkg{SciPy} to test the numerical performance of \CDOpt. For comparison, we choose the Riemannian CG solver and Riemannian trust-region solver from \Pkg{PyManopt} since L-BFGS is not available in \Pkg{PyManopt}. All the hyperparameters of these solvers are fixed by the same settings in Scetion \ref{Subsection_Numerical_NCM}. Furthermore, following the techniques in \cite{xiao2021solving}, we first randomly generate a reference point $\tilde{X} \in \ca{S}_{n,p}$, and generate the initial point $X_{init}$ as the eigenvectors that corresponds to the largest $p$ eigenvalues of $L + \alpha \cdot\mathrm{Diag}( L^\dagger \rho(\tilde{X}) )$. 
	
	Table \ref{Table_NEP_n1000} and Table \ref{Table_NEP_n3000} 
	present the numerical results on the comparison of 
	\CDOpt and \Pkg{PyManopt}, and the detailed meanings of their column headers are 
	given in Table \ref{Table_header}. From Table \ref{Table_NEP_n1000} and Table \ref{Table_NEP_n3000}, we can observe that \CDOpt achieves the same function values in comparable iterations as \Pkg{PyManopt} in all the tested instances. Moreover, \CDOpt shows comparable performance in the aspect of CPU time for large-scale problems, especially when $p$ is large.

	\begin{table}[htbp]
		\centering
		\tiny
		\caption{Test results on nonlinear eigenvalue problems with $n = 1000$. }
		\label{Table_NEP_n1000}
		\begin{tabular}{l|l|ccccccc}
			\hline
			\multicolumn{2}{c|}{}                  & fval & iter & eval\_f & eval\_grad & stat & feas & time (s)  \\ \hline
			\multirow{5}{*}{$p = 10$} 
			& cdopt-lbfgs & 3.57e+01  & 106  & 116 & 116  & 8.42e-06     & 6.52e-14     & 0.20 \\
			& cdopt-cg & 3.57e+01  & 100  & 181 & 181  & 7.43e-06     & 1.11e-13     & 0.14 \\
			& cdopt-tr & 3.57e+01  & 20 & 21 & 19  & 8.23e-06     & 4.26e-15     & 0.13 \\\cline{2-9}
			& pymanopt-rcg   & 3.57e+01  & 62  & 62  & 62    & 6.74e-06     & 1.40e-15     & 0.15 \\
			& pymanopt-rtr  & 3.57e+01  & 13  & None  & None    & 2.84e-06     & 1.62e-15     & 0.55 \\\hline
			\multirow{5}{*}{$p = 20$} 
			& cdopt-lbfgs & 2.11e+02  & 166  & 175 & 175  & 8.10e-06     & 1.33e-14     & 1.60 \\
			& cdopt-cg & 2.11e+02  & 168  & 268 & 268  & 5.72e-06     & 5.62e-14     & 1.35 \\
			& cdopt-tr & 2.11e+02  & 31 & 32 & 28  & 2.58e-07     & 1.18e-15     & 2.71 \\\cline{2-9}
			& pymanopt-rcg   & 2.11e+02  & 160  & 160  & 160    & 9.40e-06     & 1.94e-15     & 1.16 \\
			& pymanopt-rtr  & 2.11e+02  & 11  & None  & None    & 8.31e-07     & 1.30e-15     & 1.50 \\\hline
			\multirow{5}{*}{$p = 30$} 
			& cdopt-lbfgs & 6.48e+02  & 301  & 324 & 324  & 7.61e-06     & 1.77e-15     & 7.82 \\
			& cdopt-cg & 6.48e+02  & 325  & 528 & 528  & 8.28e-06     & 5.76e-15     & 4.36 \\
			& cdopt-tr & 6.48e+02  & 32 & 33 & 30  & 4.01e-07     & 1.67e-15     & 3.04 \\\cline{2-9}
			& pymanopt-rcg   & 6.48e+02  & 458  & 458  & 458    & 2.39e-05     & 2.45e-15     & 5.88 \\
			& pymanopt-rtr  & 6.48e+02  & 9  & None  & None    & 1.60e-07     & 1.84e-15     & 2.55 \\\hline
			\multirow{5}{*}{$p = 50$}
			& cdopt-lbfgs & 2.81e+03  & 407  & 425 & 425  & 8.02e-06     & 3.10e-14     & 16.34 \\
			& cdopt-cg & 2.81e+03  & 431  & 682 & 682  & 4.75e-06     & 1.43e-15     & 7.41 \\
			& cdopt-tr & 2.81e+03  & 37 & 38 & 35  & 9.33e-07     & 1.91e-15     & 6.54 \\\cline{2-9}
			& pymanopt-rcg   & 2.81e+03  & 423  & 423  & 423    & 4.13e-05     & 2.27e-15     & 8.40 \\
			& pymanopt-rtr  & 2.81e+03  & 10  & None  & None    & 3.07e-10     & 3.87e-15     & 8.56 \\\hline
			\multirow{5}{*}{$p = 100$}
			& cdopt-lbfgs & 2.16e+04  & 732  & 773 & 773  & 2.70e-05     & 8.30e-15     & 38.17 \\
			& cdopt-cg & 2.16e+04  & 999  & 1632 & 1620  & 4.86e-05     & 2.75e-15     & 25.77 \\
			& cdopt-tr & 2.16e+04  & 58 & 59 & 56  & 6.50e-07     & 2.41e-15     & 22.04 \\\cline{2-9}
			& pymanopt-rcg   & 2.16e+04  & 854  & 854  & 854    & 5.28e-04     & 3.80e-15     & 25.24 \\
			& pymanopt-rtr  & 2.16e+04  & 12  & None  & None    & 1.55e-08     & 5.52e-15     & 23.49 \\\hline
		\end{tabular} 
	\end{table}

	\begin{table}[htbp]
		\centering
		\tiny
		\caption{Test results on nonlinear eigenvalue problems with $n = 3000$. }
		\label{Table_NEP_n3000}
		\begin{tabular}{l|l|ccccccc}
			\hline
			\multicolumn{2}{c|}{}                  & fval & iter & eval\_f & eval\_grad & stat & feas & time (s)  \\ \hline
			\multirow{5}{*}{$p = 10$} 
			& cdopt-lbfgs & 3.57e+01  & 128  & 140 & 140  & 7.54e-06     & 1.42e-14     & 2.04 \\
			& cdopt-cg & 3.57e+01  & 114  & 201 & 201  & 8.14e-06     & 1.60e-13     & 0.97 \\
			& cdopt-tr & 3.57e+01  & 24 & 25 & 23  & 2.84e-08     & 5.32e-16     & 1.58 \\\cline{2-9}
			& pymanopt-rcg   & 3.57e+01  & 81  & 81  & 81    & 9.25e-06     & 8.93e-16     & 0.89 \\
			& pymanopt-rtr  & 3.57e+01  & 8  & None  & None    & 8.39e-09     & 1.01e-15     & 5.69 \\\hline
			\multirow{5}{*}{$p = 20$} 
			& cdopt-lbfgs & 2.11e+02  & 148  & 159 & 159  & 6.52e-06     & 7.46e-13     & 6.39 \\
			& cdopt-cg & 2.11e+02  & 145  & 209 & 209  & 9.22e-06     & 5.58e-13     & 2.81 \\
			& cdopt-tr & 2.11e+02  & 19 & 20 & 18  & 3.26e-06     & 3.15e-15     & 2.04 \\\cline{2-9}
			& pymanopt-rcg   & 2.11e+02  & 136  & 136  & 136    & 9.69e-06     & 1.94e-15     & 2.19 \\
			& pymanopt-rtr  & 2.11e+02  & 11  & None  & None    & 7.14e-07     & 1.46e-15     & 12.63 \\\hline
			\multirow{5}{*}{$p = 30$}
			& cdopt-lbfgs & 6.48e+02  & 239  & 255 & 255  & 9.56e-06     & 1.45e-14     & 11.04 \\
			& cdopt-cg & 6.48e+02  & 284  & 456 & 456  & 8.09e-06     & 8.29e-15     & 4.81 \\
			& cdopt-tr & 6.48e+02  & 32 & 33 & 29  & 4.42e-08     & 1.53e-15     & 6.03 \\\cline{2-9}
			& pymanopt-rcg   & 6.48e+02  & 374  & 374  & 374    & 2.42e-05     & 1.82e-15     & 7.10 \\
			& pymanopt-rtr  & 6.48e+02  & 9  & None  & None    & 3.78e-08     & 2.46e-15     & 19.06 \\\hline
			\multirow{5}{*}{$p = 50$}
			& cdopt-lbfgs & 2.81e+03  & 396  & 418 & 418  & 9.48e-06     & 7.22e-15     & 35.26 \\
			& cdopt-cg & 2.81e+03  & 293  & 462 & 462  & 9.54e-06     & 5.77e-15     & 8.45 \\
			& cdopt-tr & 2.81e+03  & 46 & 47 & 41  & 5.19e-07     & 1.70e-15     & 13.47 \\\cline{2-9}
			& pymanopt-rcg   & 2.81e+03  & 328  & 328  & 328    & 2.90e-05     & 2.81e-15     & 9.19 \\
			& pymanopt-rtr  & 2.81e+03  & 9  & None  & None    & 5.94e-06     & 4.28e-15     & 23.60 \\\hline
			\multirow{5}{*}{$p = 100$}
			& cdopt-lbfgs & 2.16e+04  & 704  & 738 & 738  & 9.84e-05     & 2.17e-14     & 98.69 \\
			& cdopt-cg & 2.16e+04  & 645  & 1014 & 1014  & 5.86e-05     & 7.02e-14     & 38.70 \\
			& cdopt-tr & 2.16e+04  & 48 & 49 & 47  & 9.30e-08     & 2.16e-15     & 71.15 \\\cline{2-9}
			& pymanopt-rcg   & 2.16e+04  & 580  & 580  & 580    & 2.33e-04     & 3.82e-15     & 38.05 \\
			& pymanopt-rtr  & 2.16e+04  & 9  & None  & None    & 1.63e-08     & 5.73e-15     & 75.66 \\\hline
		\end{tabular} 
	\end{table}

	\subsection{Training orthogonally constrained neural network}
	In this subsection, we test the numerical performance of \CDOpt on training orthogonally constrained deep neural networks (DNNs). Moreover, we compare the numerical performance of \CDOpt with existing Riemannian optimization packages \Pkg{GeoTorch} and \Pkg{McTorch}. It is worth mentioning that although \Pkg{Geoopt} is designed for training neural networks, it is not easy to be used in practice. When applied to training neural networks, \Pkg{Geoopt} requires the users to write the layers themselves from \Pkg{PyTorch.nn} modules, including designing the \codeobj{\_\_init\_\_()} function that specifies the constrained manifold, the \codeobj{forward()} function that involves geometrical materials, and the \codeobj{reset\_parameters()} function that reset the parameters to generate a feasible point for training.  Therefore, we compare \CDOpt only with \Pkg{GeoTorch} and \Pkg{McTorch}. 
	
	Our first test example is to train a modified LeNet \cite{lecun1989backpropagation} for classification on the EMNIST-bymerge dataset \cite{emnist2017emnist}, which is a set of handwritten character digits derived from the NIST Special Database 19 and converted to a $28\times28$ pixel image format and dataset structure that directly matches the MNIST dataset. The EMNIST-bymerge dataset has $697932$ training samples and $116323$ test samples in $47$ classes.  As illustrated in Figure \ref{fig:lenet}, the network has $3$ convolution layers and $3$ fully-connected layers, and we impose the orthogonal constraints on the weight matrix of the first fully-connected layer. Moreover, we also choose to train VGG19 \cite{simonyan2014very} for classification on the CIFAR-10 dataset \cite{cifar2009learning}. CIFAR-10 dataset has $50000$ training samples and $10000$ test samples that are divided into $10$ classes. The VGG19 network contains $16$ convolution layers and $3$ fully-connected layers as illustrated in Figure \ref{fig:vgg}.   Furthermore, we train Densenet121 \cite{huang2017densely} for classification on the CIFAR-100 dataset \cite{cifar2009learning}, which has 100 classes in total, with $500$ training images and $100$ testing images in each class. Similar to LeNet, we impose the orthogonal constraints on the weight matrix of the first fully-connected layer of VGG19 and Densenet.

	\begin{figure}[htbp]
		\centering
		\subfigure[LeNet]{
			\begin{minipage}[t]{1\linewidth}
				\centering
				\includegraphics[width=\linewidth]{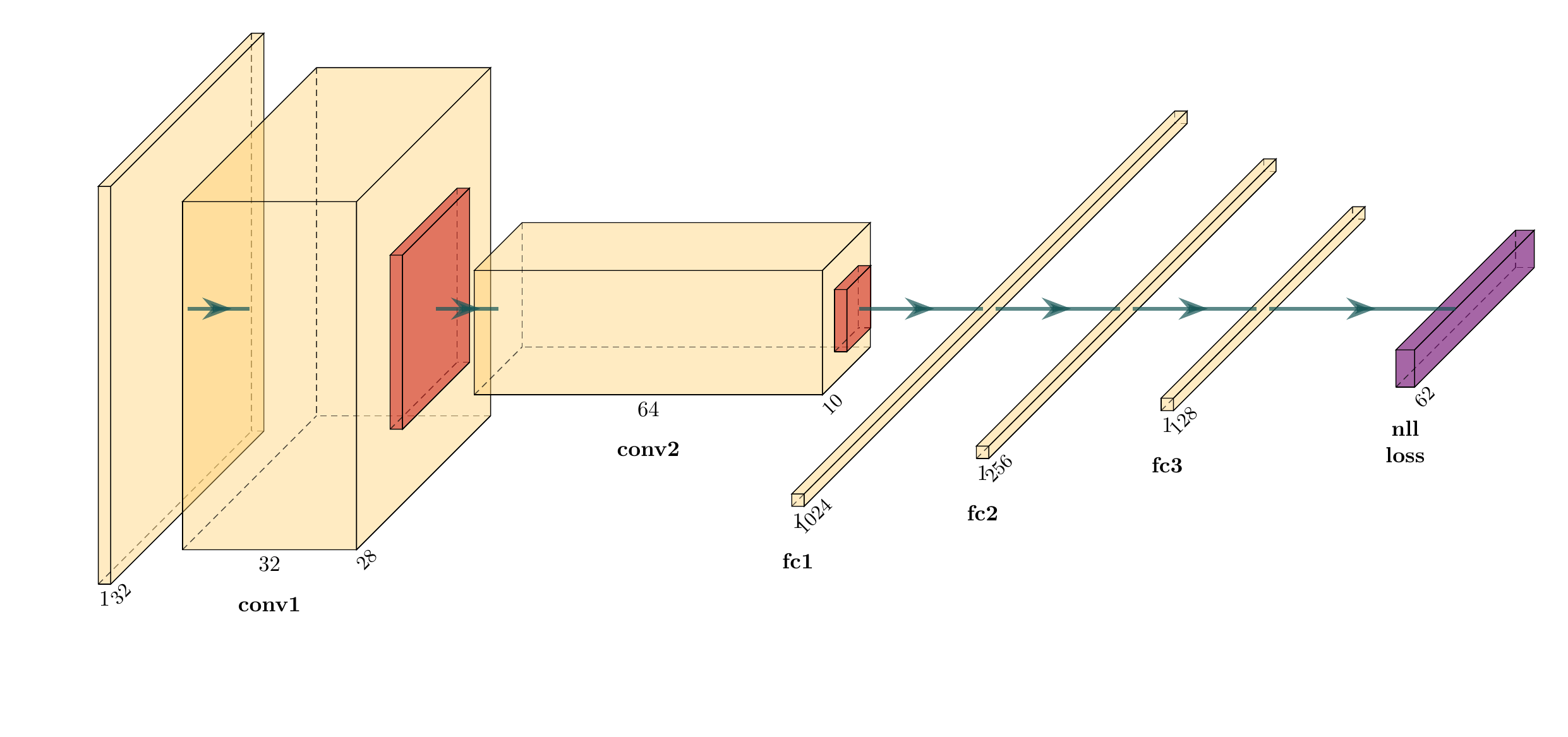}
				\label{fig:lenet}
			\end{minipage}%
		}%
		
		\subfigure[VGG19]{
			\begin{minipage}[t]{1\linewidth}
				\centering
				\includegraphics[width=\linewidth]{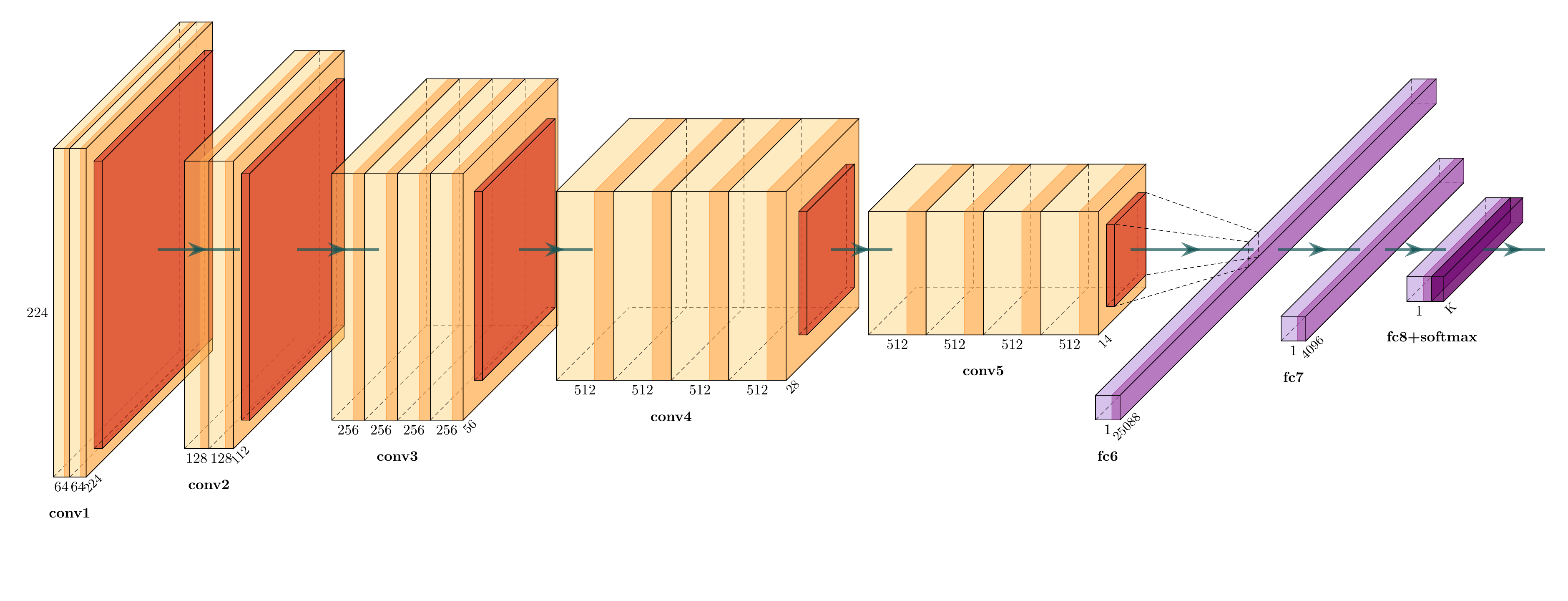}
				\label{fig:vgg}
			\end{minipage}%
		}%
		
		\subfigure[Densnet121]{
			\begin{minipage}[t]{1\linewidth}
				\centering
				\includegraphics[width=\linewidth]{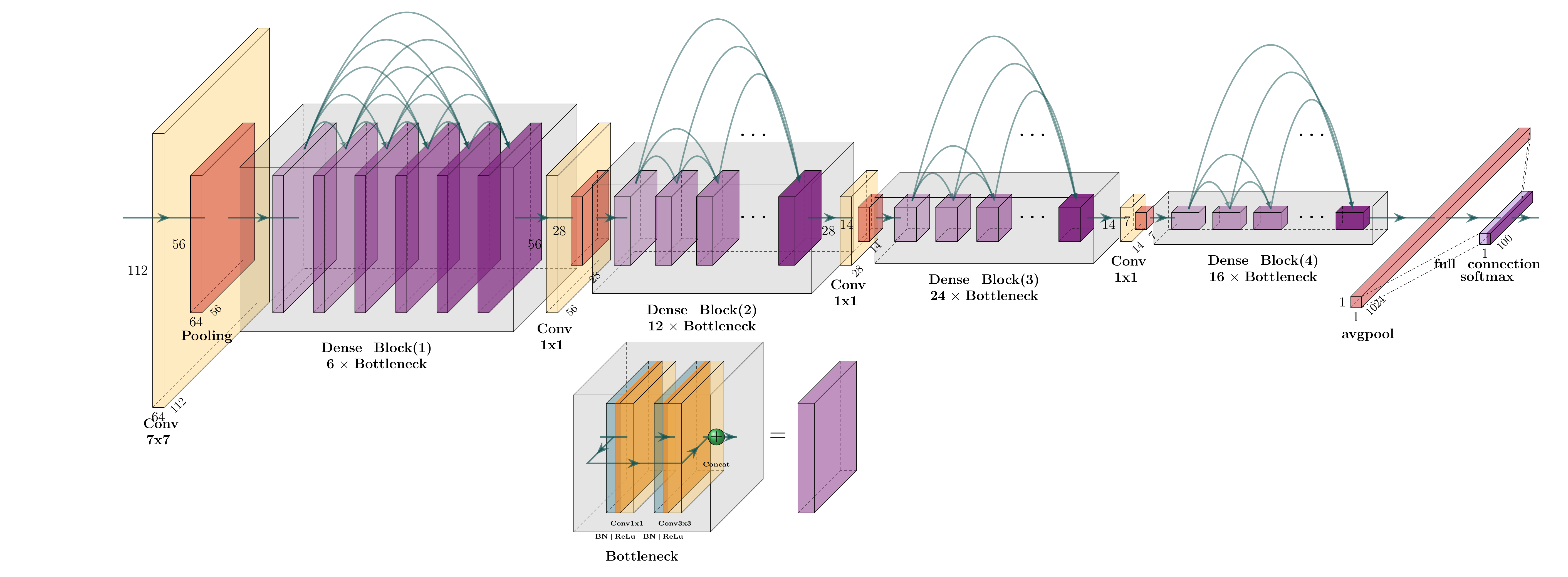}
				\label{fig:densenet}
			\end{minipage}%
		}%
		\caption{The structure of the tested neural networks.}
		\label{Fig:networks}
	\end{figure}
	
	In training these orthogonally constrained neural networks by \CDOpt, we choose the PyTorch build-in unconstrained optimizers SGD and Adam, together with the AdaMod, Lookahead, and Ranger from the  \Pkg{PyTorch-optimizer} packages. Developed from the trivialization approaches, \Pkg{GeoTorch} is also compatible to existing unconstrained optimizers, hence we try to apply SGD, Adam, AdaMod, Lookahead, and Ranger to train these orthogonally constrained networks through \Pkg{GeoTorch}. 
	However, among all the aforementioned solvers, only Riemannian SGD is provided in \Pkg{McTorch} package. 
	Therefore, we only use the Riemannian SGD from the \Pkg{McTorch} package to train these networks. 
	
	In training the modified LeNet, the batchsize is set as $64$, the total number of epochs is set as $30$, and the learning rate is set as $0.02$ for SGD with momentum parameter as $0.9$, $0.001$ for Adam \cite{kingma2014adam} and AdaMod \cite{ding2019adaptive}, $0.3$ for Ranger \cite{wright2019new} and $0.01$ for the base optimizer Yogi \cite{zaheer2018adaptive} in Lookahead \cite{zhang2019lookahead} optimizer. For training VGG19 and Densenet121, the batchsize is fixed as $256$, the total number of epochs is set as $300$, while the learning rate is set as $0.01$ for SGD (with momentum parameter as $0.9$), $3\times 10^{-4}$ for Adam, AdaMod, and DiffGrad, and $0.3$ for Ranger and $0.01$ for the base optimizer Yogi in Lookahead optimizer. Moreover, we choose the learning rate decrease strategy for all these optimizers and decrease the learning rate by multiplying a factor after every epoch. Such factor is set as $0.9$ in training LeNet, and $0.99$ in training VGG. Furthermore, when applying \CDOpt in training these networks, the penalty parameter $\beta$ in the constraint dissolving approaches is fixed as $0.01$ for all the test instances. 
	
	Table \ref{Table_DNN} exhibits the results of the numerical experiments, where the detailed meanings of the column headers are given in Table \ref{Table_header}. Compared with \Pkg{GeoTorch}, \CDOpt achieves similar accuracy and is compatible with various of unconstrained optimizers from \Pkg{PyTorch} and \Pkg{PyTorch-optimizer}. However, as computing the matrix exponential function and matrix inverse are expensive on GPU, \Pkg{GeoTorch} takes significantly  longer CPU time and costs more memory than \CDOpt.

	Furthermore, \CDOpt achieves similar accuracy as \Pkg{McTorch}. However, training manifold constrained neural networks by \CDOpt only requires matrix-matrix multiplication, while the optimizers in \Pkg{McTorch} involve computing the retractions in each iteration by singular value decomposition. Therefore, running unconstrained SGD by \CDOpt is slightly faster than running the Riemannian SGD by \Pkg{McTorch}.  Moreover, in the presence of difficulties in developing efficient Riemannian solvers, existing PyTorch-based Riemannian solvers are limited. For example, \Pkg{McTorch} only provides Riemannian SGD and Riemannian AdaGrad. In contrast, training manifold constrained neural networks by \CDOpt is highly adaptive to various existing unconstrained solvers.

	\begin{table}[htbp]
		\centering
		\tiny
		\caption{Numerical results on orthogonally constrained deep neural networks.}
		\label{Table_DNN}
		\scalebox{0.8}{
			\begin{tabular}{cc|ccc|ccc|ccc}
				\hline
				\multicolumn{2}{l|}{\multirow{2}{*}{}} & \multicolumn{3}{c|}{LeNet + EMNIST-byclass} & \multicolumn{3}{c|}{VGG 19 + CIFAR-10} & \multicolumn{3}{c}{Densenet121 + CIFAR-100} \\ 
				\multicolumn{2}{l|}{}                  & acc    & feas    & time/epoch    & acc     & feas     & time/epoch    & acc     & feas     & time/epoch    \\ \hline
				\multirow{3}{*}{SGD}     
				& \CDOpt    &91.07  &2.91e-03  &68.77  &92.81  &4.56e-03  &54.14  &76.74  &3.21e-03  &50.29  \\
				& \Pkg{GeoTorch} &91.04  &4.25e-03  &292.55  &\multicolumn{3}{c|}{out of memory}  &76.70&  4.87e-03 &56.20   \\
				& \Pkg{McTorch}  &91.05  &3.52e-03  &89.41  &92.70  &5.15e-03  &74.75  &76.45  &3.39e-03  &52.76  \\ \hline
				\multirow{3}{*}{Adam} 
				& \CDOpt    &91.02  &5.39e-03  &69.71  &92.70  &7.75e-03  &57.89  &72.13  &2.37e-03  &52.57  \\
				& \Pkg{GeoTorch} &91.05  &4.22e-03  &294.83  &\multicolumn{3}{c|}{out of memory}  &71.35  &4.61-03  & 54.96 \\
				& \Pkg{McTorch}  &\multicolumn{3}{c|}{not implemented} &\multicolumn{3}{c|}{not implemented}&\multicolumn{3}{c}{not implemented} \\ \hline
				\multirow{3}{*}{AdaMod} 
				& \CDOpt    &91.06  &3.81e-03  &82.53  &92.58  &4.38e-03  &58.78  &73.96  &2.95e-03  &56.10  \\
				& \Pkg{GeoTorch} &91.05  &4.32e-03  &319.62  &\multicolumn{3}{c|}{out of memory}  & 73.67   &4.83-03  & 58.14  \\
				& \Pkg{McTorch}  &\multicolumn{3}{c|}{not implemented} &\multicolumn{3}{c|}{not implemented}&\multicolumn{3}{c}{not implemented} \\ \hline
				\multirow{3}{*}{Ranger} 
				& \CDOpt    &91.11  &3.37e-03  &76.16  &92.76  &6.29e-03  &57.93  &73.28  &2.35e-03  &52.71  \\
				& \Pkg{GeoTorch} &90.96  &4.25e-03  &298.07  &\multicolumn{3}{c|}{out of memory}  &73.39   &4.49-03   &57.91    \\
				& \Pkg{McTorch}  &\multicolumn{3}{c|}{not implemented} &\multicolumn{3}{c|}{not implemented}&\multicolumn{3}{c}{not implemented} \\ \hline
				\multirow{3}{*}{Lookahead} 
				& \CDOpt    &91.07  &3.71e-03  &83.15  &92.97  &4.69e-03  &59.66  &74.38  &3.31e-03  &54.22  \\
				& \Pkg{GeoTorch} &91.01  &4.25e-03  &295.93  &\multicolumn{3}{c|}{out of memory}  &74.50   & 4.53e-03  &60.58   \\
				& \Pkg{McTorch}  &\multicolumn{3}{c|}{not implemented} &\multicolumn{3}{c|}{not implemented}&\multicolumn{3}{c}{not implemented} \\ \hline
			\end{tabular}
		}
	\end{table}

	\section{Conclusion}
	In this paper, we consider how to practically implement constraint dissolving approaches to solve Riemannian optimization problem \ref{Prob_Ori}. We first analyze the relationships between \ref{Prob_Ori} and \ref{Prob_Pen} under RCRCQ conditions, which is weaker than the assumptions in \cite{xiao2022constraint}.  Moreover, we propose a novel scheme for determining the threshold value of the penalty parameter $\beta$ in \ref{Prob_Pen}, which only involves the evaluations of $f$, $\A$, $c$, and their derivatives. We prove that with any penalty parameter $\beta$ greater than the suggested threshold value, \ref{Prob_Pen} and \ref{Prob_Ori} have the same first-order stationary points, second-order stationary points, and local minimums in a neighborhood of $\M$. In addition, based on the Monte Carlo sampling technique, we provide a practical scheme for choosing the penalty parameter $\beta$ in \ref{Prob_Pen}.
	
	Furthermore, we introduce the Python library package   \Pkg{CDOpt} for manifold optimization. \CDOpt serves as a complementary package to existing Riemannian optimization packages, in the sense that it enables direct implementations of various existing unconstrained optimization solvers for Riemannian optimization through the constraint dissolving approaches.  \Pkg{CDOpt} has user-friendly interfaces and supports various important features from its supported numerical backends, such as GPU/TPU supports, automatic differentiation, distributed training, JIT compilation, etc. Moreover,  \Pkg{CDOpt} only requires the expression of $c(x)$ in defining a new manifold. Therefore, users can enjoy great convenience in defining and solving a wide range of manifold constrained optimization problems, without any knowledge of the geometrical materials in differential geometry.  Furthermore, \CDOpt provides various predefined neural layers for \Pkg{PyTorch} and \Pkg{Flax}, which enables the users to build and train the manifold constrained neural networks only with minor modification to the standard \Pkg{PyTorch}/\Pkg{JAX} codes. Preliminary numerical experiments further demonstrate the high efficiency of \CDOpt, which can achieve comparable or superior performance 
	when compared with existing state-of-the-art Riemannian optimization packages.
	
	\Pkg{CDOpt} is still under active development, and some of the future works include adding supports for other numerical backends (e.g., \Pkg{TensorFlow}, \Pkg{MindSpore}, \Pkg{PaddlePaddle}), and integrate \Pkg{CDOpt} with more optimization frameworks, such as \Pkg{horovod} and \Pkg{Apex} for distributed training,  \Pkg{FATE} for federated learning, and \Pkg{BoTorch} for Bayesian optimization.

	\bibliography{ref}
	\bibliographystyle{spmpsci}      

	\begin{acknowledgements}
		The numerical experiments in this paper are performed on the supercomputing system in the Supercomputing Center of Hangzhou City University. 
		
		The authors thank Professor Pierre-Antoine Absil for his helpful comments and discussions on Riemannian optimization approaches. The authors also thank the editors and anonymous reviewers for their valuable suggestions to improve the paper.
	\end{acknowledgements}

	\section*{Ethics Declarations}
	\subsection*{Funding}
	The research of Kim-Chuan Toh and Nachuan Xiao is supported by the Ministry of Education, Singapore, under its Academic Research Fund Tier 3 grant call (MOE-2019-T3-1-010). The research of Xiaoyin Hu is supported by the National Natural Science Foundation of China (Grant No. 12301408), Zhejiang Provincial Natural Science Foundation of China under Grant (No. LQ23A010002). The research of Xin Liu  is supported in part by the National Natural Science Foundation of China (No. 12125108, 11971466, 12288201, 12021001, 11991021) and Key Research Program of Frontier Sciences, Chinese Academy of Sciences (No. ZDBS-LY-7022).

	\subsection*{Conflict of interest}
	The authors declare that they have no conflict of interest.
	
	\subsection*{Availability of data and materials}
	All data analyzed during this study are publicly available. URLs are included in this published article.
	
	\subsection*{Code availability}
	The full code was made available for review. We remark that a set of packages were used in this study, that were either open source or available for academic use. Specific references are included in this published article.

\end{document}